\newtheorem{defi}{Definition}[section]
\newtheorem{exa}[defi]{Example}
\newtheorem{teo}[defi]{Theorem}
\newtheorem{rem}[defi]{Remark}
\newtheorem{coro}[defi]{Corollary}
\newtheorem{pro}[defi]{Proposition}
\newtheorem*{rem*}{Remark}
\newcommand{\authorfootnotes}{\renewcommand\thefootnote{\@fnsymbol\c@footnote}}%
\newcommand{\interior}[1]{%
  {\kern0pt#1}^{\mathrm{o}}%
}
\newcommand{\MO}{\mathfrak{L}}
\newcommand{\C}{\mathbb{C}}
\newcommand{\Q}{\mathbb{Q}}
\newcommand{\K}{\mathbb{K}}
\newcommand{\R}{\mathbb{R}}
\newcommand{\N}{\mathbb{N}}
\newcommand{\Z}{\mathbb{Z}}
\newcommand{\esp}{\text{  }}
\newcommand\blfootnote[1]{%
  \begingroup
  \renewcommand\thefootnote{}\footnote{#1}%
  \addtocounter{footnote}{-1}%
  \endgroup}
\renewcommand\eqref[1]{(\ref{#1})} 
\begin{document}
\title[HARDY INEQUALITIES ON CONSTANT-ORDER NONCOMMUTATIVE VILENKIN GROUPS] 
 {HARDY INEQUALITIES ON CONSTAN-ORDER NONCOMMUTATIVE VILENKIN GROUPS}

\maketitle
\begin{center}
  
  \normalsize
  \authorfootnotes
  AIDYN KASSYMOV \textsuperscript{1, 2}, J.P.~VELASQUEZ-RODRIGUEZ\textsuperscript{1,*} \par \bigskip

  \textsuperscript{1} \ Ghent University, Ghent, Belgium \par
  \textsuperscript{2} \ Al-Farabi Kazakh National University, Al-Farabi, Kazakhstan\par
\textsuperscript{*} \ Corresponding author\par
  \bigskip

\end{center} 

\setcounter{footnote}{0}
\newcommand{\Addresses}{
 
{
  \bigskip
  \footnotesize
  
  J.P.~VELASQUEZ-RODRIGUEZ \textsc{Department of Mathematics: Analysis, Logic and Discrete Mathematics, Ghent University, Belgium. }\par\nopagebreak
  \textit{E-mail address:} \texttt{juanpablo.velasquezrodriguez@ugent.be}
}

{
  \bigskip
  \footnotesize
  
  AIDYN KASSYMOV, \textsc{Al-Farabi Kazakh National University, 71 Al-Farabi avenue, 050040 Almaty, Kazakhstan  \\  and \\ Department of Mathematics: Analysis, Logic and Discrete Mathematics, Ghent University, Belgium.}\par\nopagebreak
  \textit{E-mail address:} \texttt{aidyn.kassymov@ugent.be}
}

}

\blfootnote{The authors are supported by the FWO Odysseus 1 grant G.0H94.18N: Analysis and Partial Differential Equations. }

\subjclass[2020]{Primary; 43A25, 58J40; Secondary: 20E18, 42B15, 42B25. }

\keywords{Locally profinite groups, Vilenkin groups, Hardy operator, Hardy inequalities}

\date{\today}
\begin{abstract}
In this note we extend several integral inequalities to the context of noncommutative Vilenkin groups. We prove some sharp weak and strong type estimates for the Hardy operator and the Hardy-Littlewood-P{\'o}lya operator on constant-order noncommutative Vilenkin groups. In particular for graded $\K$-Lie groups, where $\K$ is a non-archimedean local field, we additionally provide some functional inequalities, like the Hardy-Littlewood-Sobolev unequality and the Stein-Weiss inequality, linking some classes of homogeneous pseudo-differential operators, like the Vladimirov-Taibleson operator and the Vladimirov Laplacian, with Hardy inequalities.
\end{abstract}
\maketitle
\section{Introduction }
Between 1915 and 1925 the work of G. H. Hardy and M. Riesz on the Hilbert's inequality and its generalisations produced the well known and celebrated result that we call nowadays \emph{the Hardy inequality} \cite{10.2307/27642033}. Hardy's original theorem states that if $f$ is a measurable function with non-negative values, then it holds $$\int_0^\infty \Big( \frac{1}{x} \int_0^x f(t) dt \Big)^r dx \leq \Big( \frac{r}{r-1} \Big)^r \int_0^\infty f(x)^r dx, \esp \esp 1<r<\infty,$$where the constant $r/(r-1)$ is known to be sharp. The above theorem has inspired a lot of research on similar integral inequalities, and it has been generalised to a wide number of contexts such as homogeneous groups, hyperbolic spaces, complete manifolds, among others. See \cite{10.2307/27642033, Hardyinequalitybook, hardyineqhomogroups, doi:10.1098/rspa.2018.0310} and the references therein for a detailed historical account of the developments on Hardy inequalities. 

In the context of $p$-adic analysis considerable progress on Hardy inequalities has been done during the last 10 years, see \cite{Zun, Hussain, mlinearhardy, hardyoperatorproductspaces, WeightedmultilinearpadicHardyoperatorsandcommutators, p-adiccommutators, p-adicHLSinequality}. In this note we would like to contribute to such progress in two different ways. First, by extending from $\Q_p$ to constant-order noncommutative Vilenkin groups the sharp estimates on the norm of the fractional Hardy operator obtained in \cite{Zun, Hussain}. And second, by extending some of the integrals inequalities and hypoelliptic functional inequalities studied in \cite{2018arXiv180501064R} to graded $\K$-Lie groups, where $\K$ is a non-archimedean local field. An important step in our analysis is the extension of some weighted integral inequalities which will allow us to give more general versions of some of the results given in \cite{Hussain, p-adiccommutators, p-adicHLSinequality}. On top of that, we consider the interplay between Hardy inequalities and certain homogeneous operators, like the Vladimirov-Taibleson operator and the Vladimirov Laplacian, in a similar way as \cite{2018arXiv180501064R} where the reader may find the proof of several hypoelliptic functional inequalities relating Rockland operators with Hardy inequalities. All of our results appear to be new in the literature since, in the knowlege of the authors, there are no works about Hardy inequalities and pseudo-differential operators on noncommutative locally profinite groups, even though locally compact abelian Vilenkin groups like $\Q_p$ have been extensively studied.     

\section{Preliminaries and main results}

Let us start our exposition by recalling the formal definition of the groups we will study, and the particular examples that we will consider.

\begin{defi}\label{defilcvilenkingroup}\normalfont
We say that a topological group $G$ is a \emph{locally compact Vilenkin group} if $G$ is a  locally compact, Hausdorff, totally disconnected, unimodular topological group, endowed with a strictly decreasing sequence of compact open sub-groups $\{G_n\}_{n \in \Z}$ such that \begin{enumerate}
    \item[(i)] It holds:$$\varkappa_n := | G_n /G_{n+1}|< \infty,$$ for every $n \in \Z.$
    \item[(ii)]$$G= \bigcup_{n \in \Z} G_n , \esp \esp \text{and} \esp \esp \bigcap_{n \in \Z} G_n = \{e\}.$$
    \item[(iii)]The sequence $\{G_n\}_{n \in \Z}$ form a basis of neighbourhoods at $e \in G$.
\end{enumerate}
We will say that $G$ is a \emph{bounded-order} Vilenkin group if $$\sup_{n \in \Z} |G_n/G_{n+1}| < \infty.$$ We will say that $G$ is \emph{constant-order} when $|G_n/G_{n+1}|=\varkappa$ for some $\varkappa \in \N$ and every $n \in \Z.$
\end{defi}
\begin{rem}
The group $G$ carries a natural Haar measure. We will assume from now on that this measure is normalized in such a way that $|G_0|= 1$.
\end{rem}

\begin{exa}\label{exaHeisenberg}
Our first example of a locally compact noncommutative Vilenkin group is the $2d+1$-dimensional Heisenberg group over $\Q_p$, here denoted by  $\mathbb{H}_{d}(\Q_p)$ or simply $\mathbb{H}_d$ for short, in turn defined as \[
\mathbb{H}_{d}(\Q_p)= \left\{
  \begin{bmatrix}
    1 & x^t & z \\
    0 & I_{m} & y \\
    0 & 0 & 1 
  \end{bmatrix}\in GL_{d+2}(\Q_p) \esp : \esp x , y \in \Q_p^d, \esp z \in \Q_p \right\}. 
\]
Clearly $\mathbb{H}_{d}(\Q_p)$ is an analytic $2d+1$-dimensional manifold locally homeomorphic to $\Q_p^{2d+1}$. Moreover, the operations on $\mathbb{H}_{d}(\Q_p)$ are analytic functions so $\mathbb{H}_{d}(\Q_p)$ is a $p$-adic Lie group. Let us denote by $\mathfrak{h}_{d}(\Q_p)$ its associated Lie algebra. We can write explicitly \[
\mathfrak{h}_{d}(\Q_p)= \left\{
  \begin{bmatrix}
    0 & X^t & Z \\
    0 & 0_{m} & Y \\
    0 & 0 & 0 
  \end{bmatrix}\in \mathcal{M}_{d+2}(\Q_p) \esp : \esp X , Y \in \Q_p^d, \esp Z \in \Q_p \right\}, 
\]and as usual this Lie algebra is nilpotent. The $p$-adic Heisenberg group is a locally compact Vilenkin group with the filter basis of compact open subgroups $\mathscr{G}:= \{ G_n \}_{n \in \Z}$ given by \[G_n=
p^n \mathbb{H}_{d}( \Z_p):= \left\{
  \begin{bmatrix}
    1 & x^t & z \\
    0 & I_{m} & y \\
    0 & 0 & 1 
  \end{bmatrix}\in GL_{d+2}( \Q_p) \esp : \esp x , y \in p^n \Z_p^d, \esp z \in p^{2n} \Z_p \right\}, 
\]where clearly $|G_n / G_{n+1}| = p^{2d+2}$ for all $n$, so the Heisenberg group is a constant-order Vilenkin group.
\end{exa}

\begin{exa}\label{exatriangularmatrices}
Let $\mathfrak{t}_{m \times m} (\Q_p)$ be the Lie algebra of strictly upper triangular matrices of order $m$ over $\Q_p$, and let $T_{m \times m}(\Q_p)$ be the group of upper uni-triangular matrices over $\Q_p$. $\mathfrak{t}_{m \times m} (\Q_p)$ is a graded algebra with the gradation $$\mathfrak{t}_{m \times m} (\Q_p) =V_1(\Q_p) \oplus ... \oplus V_{m-1
}(\Q_p), \esp \esp \esp \text{where} \esp \esp V_j (\Q_p) = \bigoplus_{j=1}^{m - j} \Q_p E_{i , i+j} . $$Here $E_{i,j}$ denotes the $m \times m $ matrix whose entries are all zero except for the $ij$-entry which is equal to one. It is well known that the exponential $\mathbb{exp}: \mathfrak{t}_{m \times m}(\Q_p) \to T_{m \times m}(\Q_p)$ is bijective and it maps the ideals $$\mathfrak{g}_n:= p^n V_1 (\Z_p) \oplus p^{2n} V_2 ... \oplus p^{(m-1)n}V_{m-1
}(\Z_p), \esp \esp \esp \text{where} \esp \esp V_j (\Z_p) = \bigoplus_{j=1}^{m - j} \Z_p E_{i , i+j},$$to a sequence of compact open subgroups $G_n := \mathbb{exp}(\mathfrak{g}_n)$ that make $T_{m \times m} (\Q_p)$ a locally compact constant-order Vilenkin group. 
\end{exa}

\begin{exa}\label{exaEngelalgebra}
Consider the $4$-dimensional $p$-adic Lie group $\mathbb{E}_4 (\Q_p)$, or $\mathbb{E}_4$ for simplicity, which is defined here as the exponential image of the filiform $\Q_p$-Lie algebra $\mathfrak{e}_4$, called by some authors the Engel algebra, defined in terms of the $\Q_p$-basis $\{X , Y_1 , Y_2 , Y_3\}$ and the commutation relations $$[X, Y_i] = Y_{i+1}, \esp \esp i=1,2, \esp \esp \text{and} \esp \esp \esp \esp [X,Y_3] =0.$$We can think on $\mathfrak{e}_4$ as the matrix algebra containing all the matrices of the form  \[ \begin{bmatrix}
    0 & x & 0 & y_3 \\
    0 & 0 & x &  y_2  \\
    0 & 0 & 0 & y_1 \\ 
    0 & 0 & 0 & 0
  \end{bmatrix}, \esp \esp \esp x , y_1 , y_2 , y_3 \in \Q_p.\]In this way $\mathbb{E}_4$, which is the exponential image of the $\mathfrak{e}_4$, is a nilpotent subgroup of $GL_4 (\Q_p)$ that we call here \emph{the Engel group}. With the coordinates $$(x,y_1,y_2,y_3):= \mathbb{exp}( xX +y_1 Y_1 + y_2 Y_2 + y_3Y_3),$$we can identify $\mathbb{E}_4$ with $\Q_p^4$ endowed with the group law: \begin{align*}
      (x,y_1,y_2 , y_3) \times (x',y_1',y_2' , y_3'):= (x + x',y_1 + y_1',y_2 +y_2' -x y_1',y_3 +  y_3' + \frac{1}{2} x^2 y_1' -x y_2').
  \end{align*}Denote by "$\diamond$" the BCH product on $\mathfrak{e}_4$. Then $\mathbb{E}_4 \cong (\mathfrak{e}_4, \diamond)$ is a constant-order locally compact Vilenkin group since it can be endowed with the sequence of compact open subgroups $(G_n , \times) \cong (p^n \mathfrak{e}_4 , \diamond) $ where $$G_n:= \mathbb{exp}( p^n \mathfrak{e}_4) := \mathbb{exp}( p^n \Z_p X + p^n \Z_p Y_1 + p^{2n} \Z_p Y_2 + p^{3n} \Z_p Y_3) ,$$and it clearly holds $|G_n / G_{n+1}| = p^7$ for all $n \in \Z$. Here $Q=7$ is the homogeneous dimension of $\mathbb{E}_4$.   
\end{exa}
 
\begin{exa}\label{exapadicnilpotentgroups}
More generally, let $p$ be a prime number, $p\neq 2$, and let $\K$ be nonarchimedean local field with ring of integers $\mathcal{O}_\K$, prime ideal $\mathfrak{p} = \mathscr{p} \mathcal{O}_\K$ and residue field $\mathcal{O}_\K / \mathfrak{p} \cong \mathbb{F}_q$, where $q= p^{[\K : \Q_p]}$. Let $\mathfrak{g}$ be a nilpotent $\K$-Lie algebra. We say that $\mathfrak{g}$ is \emph{graded} if there exists a decomposition of the form $$\mathfrak{g} = \bigoplus_{k=1}^{r'} V_{k}, \esp \esp \esp [V_i , V_j]\subseteq V_{i+j}.$$Let $V_{\nu_1},...,V_{\nu_r}$ the non-trivial subspaces appearing in the above decomposition, so that $$\mathfrak{g} = \bigoplus_{k=1}^{r} V_{\nu_k}, \esp \esp b_k := dim(V_{\nu_k}).$$We will call the number $Q:=\nu_1 b_1 + \nu_2 b_2 + ...+ \nu_r b_r$ the homogeneous dimension of $\mathfrak{g}$. In particular, if the elements of the first subspace appearing in the decomposition of $\mathfrak{g}$ generate the whole algebra, we say that $\mathfrak{g}$ is \emph{stratified}. If the Lie algebra of a nilpotent Lie group $G$ is graded, we say that $G$ is a \emph{graded Lie group}. When the Lie algebra is stratified we say that $G$ is a \emph{stratified group}.
A consequence of the graded structure on a $\K$-Lie algebra is the existence of dilations on the group. That is, there exists a family of linear mappings $D_\gamma : \mathfrak{g} \to \mathfrak{g}$, $\gamma \in \K^*$, satisfying :
\begin{itemize}
        \item[-]the mappings $D_\gamma$ are diagonalisable and each $V_j$ is the eigenspace associated to one of the eigenvalues of $D_\gamma$.
        \item[-] each $D_\gamma$ is a morphism of the Lie algebra $\mathfrak{g}$, that is, a linear mapping from $\mathfrak{g}$ to itself which respects the Lie bracket: $$[D_\gamma X , D_\gamma Y] = D_\gamma [X, Y], \esp \esp \text{for all} \esp X, Y \in \mathfrak{g}, \esp \esp \text{and all} \esp \gamma \in \K^*.$$
\end{itemize}
The dilations are transported to the Lie group by the exponential map in the following way: the maps $$\mathbb{exp} \esp  \circ D_\gamma \circ \esp  \mathbb{exp}^{-1}, \esp \esp \gamma \in \K^*  ,$$are automorphisms of the group $G$. We will also denote by $D_\gamma$ the dilations on the group. We may write $$\gamma x := D_\gamma (x).$$
                     
A graded group $G$ can be endowed with the structure of a locally compact Vilenkin group with a sequence compatible with the dilations. For doing so let $X_1 , ...,X_d$ be a $\K$-basis of eigenvectors in $\mathfrak{g}$ associated to the dilations $D_\gamma$ so $D_\gamma$ is diagonal in this basis. Define $$\mathfrak{g}_0 := \mathcal{O}_\K X_1 + ... + \mathcal{O}_\K X_d, \esp \esp \text{and} \esp \esp G_0 := \mathbb{exp}(\mathcal{O}_\K X_1 + ... + \mathcal{O}_\K X_d). $$Hence, if $|\gamma|_\K = q^{-n}$, $$G_n := \mathbb{exp}( \gamma \mathfrak{g}_0) = D_\gamma ( G_0)  , \esp \esp  \gamma \mathfrak{g}_0 =  \mathscr{p}^{n \nu_1 } \mathcal{O}_\K X_1 + ... + \mathscr{p}^{n \nu_r} \mathcal{O}_\K X_d = D_{\gamma} (\mathfrak{g}_0),$$defines a sequence $\{ G_n \}_{n \in \Z}$ of compact open sub-groups of $G$ such that $$G= \bigcup_{n \in \Z} G_n , \esp \esp \esp \text{and} \esp \esp \esp \bigcap_{n \in \Z} G_n = \{ e \}.$$
With the above filtration $G$ is a constant Vilenkin group since \begin{align*}
    | G_n / G_{n+1}| & = | \mathfrak{g}_n / \mathfrak{g}_{n+1}| \\ & = q^{\nu_1 b_1 + ... + \nu_r b_r} \\ &= q^{Q},
\end{align*} for all $n \in \Z$. 
\end{exa}

\begin{rem}The $p$-adic Lie groups $\Q_p^d ,\mathbb{E}_4 (\Q_p), \mathbb{H}_{d}(\Q_p)$ and $T_{m \times m} (\Q_p)$ are examples of graded $\K$-Lie groups. As for the real case, the standard dilations are given by:\begin{itemize}
    \item $D_\gamma (x_1,...,x_d) = (\gamma x_1 , ... , \gamma x_d)$, for $(x_1,...,x_d) \in \Q_p^d$.
    \item $D_\gamma (x, y_1 , y_2,y_3) = ( \gamma x , \gamma y_1 , \gamma^2 y_2 , \gamma^3 y_3)$, for $(x, y_1 , y_2,y_3) \in \mathbb{E}_4 (\Q_p)$.
    \item $D_\gamma (x, y , z) = (\gamma x, \gamma y, \gamma^2 z)$, for $(x,y,z) \in \mathbb{H}_{d} (\Q_p).$
    \item $ [ D_\gamma (M) ]_{ij} = \gamma^{j-i} [M]_{ij}, \esp \esp 1 \leq i < j \leq m $, for $M \in T_{m \times m} (\Q_p).$
\end{itemize} 
\end{rem}

We will be using the following notation along the exposition of our results. 
\begin{defi}\normalfont\label{basicdefinitions}
\esp 
\begin{enumerate}
    \item[(i)] Let $w$ be a non-negative locally integrable function and let $1 \leq r <  \infty$ be a real number. We will use the notation $L^r_w (G)$ for the weighted $L^r$-spaces $L^r(G, w)$. In particular $L^r_\alpha (G)$ will denote the weighted $L^r$-spaces $L^r (G , | \cdot | ^\alpha)$, $\alpha \in \R$. Also, we will denote by $\Tilde{L}^r_\alpha (G)$ the collection of radial functions in $L^r_\alpha (G)$.
    \item[(ii)] Let $A$ be a measurable set. We will denote by $\mathbb{1}_A$ the characteristic function of the set $A$.
    \item[(iii)] Given two Banach spaces $E$ and $F$ we will denote by $\mathcal{L} (E,F)$ the collection of bounded operators from $E$ to $F$. In particular when $E=F$ we will write $\mathcal{L}(E)$ instead of $\mathcal{L}(E,E)$. We will write $\| T \|_{\mathcal{L} (E,F)}$ for the operator norm of a linear operator $T \in \mathcal{L} (E, F)$. 
\end{enumerate}
\end{defi}

\subsection{The norm of the Hardy operator}
A simple but important example of a Vilenkin group is the additive group of the $d$-dimensional vector space over the $p$-adic numbers $\Q_p^d$, where $p$ is prime number that we fix for the rest of the work. The field of $p$-adic numbers $\Q_p$ is defined as the complection of the field of rational numbers $\Q$ with respect to the $p$-adic norm $|\cdot|_p$ which is defined as \[|x|_p := \begin{cases}
0 & \esp \text{if} \esp x=0, \\ p^{-\gamma} & \esp \text{if} \esp x= p^{\gamma} \frac{a}{b},
\end{cases}\]where $a$ and $b$ are integers coprime with $p$. The integer $\gamma= ord(x)$, with $ord(0) := + \infty$, is called the $p$-adic order of $x$. The field $\Q_p$ with the norm $|\cdot|_{p}$ is an ultrametric space, that is, the norm $|\cdot|_p$ satisfy the ultrametric inequality: $$|x-y|_p \leq \max\{ |x|_p , |y|_p \}, \esp \esp x, y \in \Q_p.$$A consequence of this property is that the balls $B(0,p^k)$, $k \in \Z$, are compact open subgroups of $\Q_p$. The unit ball of $\Q_p$ with the $p$-adic norm is called the compact group of $p$-adic integers and it will be denoted by $\Z_p$. Any $p$-adic number $x \neq 0$ has a unique expansion of the form $$x = p^{ord(x)} \sum_{j=0}^{\infty} x_j p^j,$$where $x_j \in \{0,1,...,p-1\}$ and $x_0 \neq 0$. By using this expansion we can see that the metric balls determined by $| \cdot |_p$ can be writen as $B(0, p^{-n}) = p^n \Z_p$. The ultrametric on $\Q_p$ can be extended to $\Q_p^d$ by defining the norm $$|x|_p := \max_{1 \leq i \leq d} |x_i|_p.$$The ultrametric induced by this norm determines a natural sequence of compact open sub-groups of $\Q_p^d$, the metric balls$$G_n := B (0,p^{-n}) = \{x \in \Q_p^d \esp : \esp |x|_p \leq p^{-n}\}.$$ The authors in \cite{Zun, Hussain} have used the metric induced by $| \cdot |_p$ to define and study the $p$-adic fractional Hardy operator on $G=\Q_p^d$ introduced in \cite{p-adiccommutators} and defined as $$H_{\Q_p^d}^\delta f(x) := \frac{1}{|x|_p^{d - \delta}} \int_{|y|_p \leq |x|_p} f(y) dy , \esp \esp x \in \Q_p^d \setminus \{ 0 \}. $$In that setting they have provided sharp weak and strong estimates for the fractional Hardy operator acting on weighted $L^r$-spaces. These estimates do not depend on any particular property of the group $\Q_p^d$, and the arguments in \cite{Zun, Hussain} can be extended, in principle, to constant-order Vilenkin groups. The most remarkable change that we have to introduce in the arguments given in \cite{Zun, Hussain} in order to achieve such a generalisation is the choice of metric, and consequently the choice of definition of the Hardy operator. Here in this work we will use the natural metric on Vilenkin groups given in terms of a sequence of compact open sub-groups.
\begin{defi}\normalfont\label{metricvilenkingroups}
Let $G$ be a locally compact Vilenkin group with a sequence of compact open sub-groups $\mathscr{G}= \{ G_n\}_{n \in \N_0}$. For $x,y \in G$ define their associated distance by
\[\varrho_\mathscr{G} (x,y) = |x y^{-1}|_{\mathscr{G}} :=\begin{cases} 0 & \esp \esp \text{if} \esp x=y, \\ |G_n|  & \esp \esp \text{if} \esp x y^{-1} \in G_n \setminus G_{n+1}.\end{cases}\]  
\end{defi}

In some Vilenkin groups it might be more natural for one to choose a certain particular sequence of compact open subgroups and a particular distance function. This is the case of graded $\K$-Lie groups where we take the sequence to be the one determined by the dilations of the group so that we can define a convenient ultrametric that turns out to be an homogeneous quasi-norm.   

\begin{defi}\label{defihomoquasinorm}\normalfont
Let $G$ be a graded $\K$-Lie group together with the dilations $D_\gamma, \gamma \in \K^*$. An \emph{homogeneous quasi-norm} is a continous non-negative function $| \cdot| : G \to [0 , \infty)$ satisfying 
\begin{itemize}
    \item $|x^{-1}| = |x|$ for all $x \in G$,
    \item $|\gamma x| = | \gamma |_\K |x| $ for all $x \in G$ and $\gamma \in \K^*$,
    \item $|x|=0$ if and only if $x = e$.
\end{itemize}
\end{defi}
\begin{defi}\label{defimetricgradedgroups}\normalfont
Let $G$ be a graded $\K$-Lie group with homogeneous dimension $Q$, considered as a Vilenkin group with the sequence of subgroups $\mathscr{G}=\{ G_n\}_{n \in \Z}$ determined by the dilations on the group. We define the distance function $| \cdot |_G$ on $G$ by  \[ = |x y^{-1}|_{G} :=\begin{cases} 0 & \esp \esp \text{if} \esp x=y, \\ |x y^{-1}|_\mathscr{G}^{1/Q}  & \esp \esp \text{if} \esp x y^{-1} \in G_n \setminus G_{n+1}.\end{cases}\]
\end{defi}
\begin{rem}
The above defined metric is an homogeneous quasi-norm in the sense of Definition \ref{defihomoquasinorm}. To see this take $xy^{-1} \in G_n \setminus G_{n+1}$ and $\gamma \in \K^*$ with $|\gamma|_\K = q^{-k}$. Thus $\gamma(xy^{-1}) \in G_{n+k} \setminus G_{n+k+1}$ so $|\gamma(xy^{-1})|_{\mathscr{G}}=q^{-Q(n+k)} = |\gamma|_\K^{Q} |xy^{-1}|_\mathscr{G}$.
\end{rem}

\begin{rem}
Notice that the metric on the group depends on the choice of the sequence of sub-groups. However we will simply write $| \cdot |$ for the metric on the group since the choice of the sequence of sub-groups will be clear from the context. Also, for simplicity, in this work we will only consider constant-order Vilenkin groups. In that case the possible values taken by the distance function will always be powers of a certain particular number $\varkappa$. In principle this $\varkappa$ is a natural number but actually we might take it as any real number greater than one by defining the metric on the group as \[\varrho_\mathscr{G}' (x,y) = |x y^{-1}|_{\mathscr{G}}' :=\begin{cases} 0 & \esp \esp \text{if} \esp x=y, \\ \varkappa^{-n}  & \esp \esp \text{if} \esp x y^{-1} \in G_n \setminus G_{n+1}.\end{cases}\]  

\end{rem}
With the above defined metric we can define the fractional Hardy operator on Vilenkin groups in a natural way. 
\begin{defi}\normalfont\label{defihardyoperator}
\esp 
\begin{itemize}
    \item Let $G$ be a locally compact Vilenkin group. Define the \emph{fractional Hardy operator} on $G$ with respect to the ultrametric $| \cdot |$ by $$H_\delta f (x) := \frac{1}{|x|^{1 - \delta}} \int_{B(e,|x|)} f(t) dt, \esp \esp x \in G \setminus \{ e \}. $$Here $0 \leq \delta < 1$ is a real number. The \emph{adjoint fractional Hardy operator} is defined as $$H_\delta^* f(x) := \int_{|y|>|x|} \frac{f(y)}{|y|^{1 - \delta}} dy, \esp \esp x \in G \setminus \{ e \}.$$ In particular when $\delta=0$ we will simply write $H$ and $H^*$ instead of $H_0$ and $H_0^*$. 
    \item Let $G$ be a graded $\K$-Lie group. Define the \emph{fractional Hardy operator} on $G$ with respect to the ultrametric $| \cdot |_G$ by $$H_\delta f (x) := \frac{1}{|x|_G^{Q - \delta}} \int_{B(e,|x|_G)} f(t) dt, \esp \esp x \in G \setminus \{ e \}, $$where $ 0 \leq \delta < Q$ is a real number. The \emph{adjoint fractional Hardy operator} is defined as $$H_\delta^* f(x) := \int_{|y|_G>|x|_G} \frac{f(y)}{|y|_G^{Q - \delta}} dy, \esp \esp x \in G \setminus \{ e \}.$$
\end{itemize}
\end{defi}

By using similar techniques as the authors in \cite{Zun, Hussain} we compute explicitly the norm of the previously defined fractional Hardy operator acting on radial functions, for a constant-order Vilenkin group $G$. 
\begin{teo}\label{teosharpestimateradialfunct}
Let $G$ be a constant-order Vilenkin group, let us say $|G_n / G_{n+1}| = \varkappa$ for all $n \in \Z$. Let $1<r<\infty$, $0 \leq  \delta < 1$ and $- \infty < \alpha < r(1 - \delta)-1$ be real numbers. Then $$\|H_\delta \|_{\mathcal{L}(\Tilde{L}^r_{\alpha+ \delta r} (G) , \Tilde{L}^r_\alpha (G))} = \frac{1 - \varkappa^{-1}}{1 - \varkappa^{\frac{\alpha}{r} - \frac{1}{r'} + \delta}},$$where $\frac{1}{r} + \frac{1}{r'} =1$. In particular when $\delta=0$ $$\|H \|_{\mathcal{L}( \Tilde{L}^r_\alpha (G))} = \frac{1 - \varkappa^{-1}}{1 - \varkappa^{\frac{\alpha}{r} - \frac{1}{r'} }}.$$ 
\end{teo}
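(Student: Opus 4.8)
The plan is to reduce the computation of the operator norm on radial functions to a one‑dimensional discrete weighted Hardy inequality, to find the sharp constant there by a Minkowski (triangle) inequality in $\ell^r(\Z)$, and to check sharpness with truncated power functions.

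First I would make everything on radial functions explicit, using the constant‑order hypothesis and the normalisation $|G_0|=1$: one has $|G_n|=\varkappa^{-n}$, the distance takes the value $|x|=\varkappa^{-n}$ on the ``sphere'' $S_n:=G_n\setminus G_{n+1}$, and $|S_n|=\varkappa^{-n}(1-\varkappa^{-1})$; moreover the ultrametric structure gives $B(e,|x|)=G_n$ whenever $x\in S_n$. Hence, if $f$ is radial with value $f_n$ on $S_n$, then $H_\delta f$ is again radial and equals $\varkappa^{n(1-\delta)}(1-\varkappa^{-1})\sum_{k\ge n}f_k\varkappa^{-k}$ on $S_n$, while $\|f\|_{\Tilde L^r_\beta(G)}^r=(1-\varkappa^{-1})\sum_n|f_n|^r\varkappa^{-n(\beta+1)}$ for any weight exponent $\beta$. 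Substituting $c_n:=f_n\varkappa^{-n(\alpha+\delta r+1)/r}$ turns $\|f\|_{\Tilde L^r_{\alpha+\delta r}(G)}^r$ into $(1-\varkappa^{-1})\|c\|_{\ell^r(\Z)}^r$, and a short bookkeeping computation (the exponents are arranged exactly so that the left–hand weight disappears) gives
\[\|H_\delta f\|_{\Tilde L^r_\alpha(G)}^r \;=\; (1-\varkappa^{-1})^{r+1}\sum_{n\in\Z}\Bigl|\sum_{j\ge 0}\varkappa^{j\theta}c_{n+j}\Bigr|^r, \qquad \theta:=\frac{\alpha}{r}-\frac{1}{r'}+\delta<0,\]
where $\theta<0$ is precisely the hypothesis $\alpha<r(1-\delta)-1$. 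Thus $\|H_\delta\|_{\mathcal L(\Tilde L^r_{\alpha+\delta r}(G),\Tilde L^r_\alpha(G))}=(1-\varkappa^{-1})\,\|S\|_{\mathcal L(\ell^r(\Z))}$ for the operator $Sc:=\bigl(\sum_{j\ge 0}\varkappa^{j\theta}c_{n+j}\bigr)_n$, and it remains to prove $\|S\|_{\mathcal L(\ell^r(\Z))}=(1-\varkappa^\theta)^{-1}$.

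For the upper bound I would write $Sc=\sum_{j\ge 0}\varkappa^{j\theta}\tau_j c$ as a superposition of the shifts $\tau_j c=(c_{n+j})_n$, which are isometries of $\ell^r(\Z)$, and apply the triangle inequality: $\|Sc\|_{\ell^r}\le\sum_{j\ge 0}\varkappa^{j\theta}\|\tau_j c\|_{\ell^r}=(1-\varkappa^\theta)^{-1}\|c\|_{\ell^r}$. Unwinding the substitution yields $\|H_\delta\|\le(1-\varkappa^{-1})/(1-\varkappa^\theta)$; the same estimate, via Hölder and $\theta<0$, also shows that $H_\delta f$ is well defined for every $f\in\Tilde L^r_{\alpha+\delta r}(G)$. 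For the matching lower bound one cannot use the formal extremiser (a geometric sequence, not in $\ell^r$), so I would test with its truncation $c=\mathbb 1_{\{1,\dots,N\}}$, i.e. $f=\sum_{n=1}^N\varkappa^{n(\alpha+\delta r+1)/r}\mathbb 1_{S_n}$. Then for $1\le n\le N$ one computes $(Sc)_n=(1-\varkappa^{(N-n+1)\theta})/(1-\varkappa^\theta)$, which for $n\le N-M$ is at least $(1-\varkappa^{(M+1)\theta})/(1-\varkappa^\theta)$ since $\theta<0$; hence $\|Sc\|_{\ell^r}^r/\|c\|_{\ell^r}^r\ge\frac{N-M}{N}\bigl((1-\varkappa^{(M+1)\theta})/(1-\varkappa^\theta)\bigr)^r$, and letting first $N\to\infty$ and then $M\to\infty$ forces $\|S\|\ge(1-\varkappa^\theta)^{-1}$. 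Combining the two bounds gives the claimed value, and the case $\delta=0$ is the specialisation $\theta=\alpha/r-1/r'$.

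The argument uses no inequality deeper than the triangle inequality, so the real points of care are: (i) getting the exponent $\theta$ and the juggling of the weights exactly right — this is where $|G_0|=1$, constant order, and $\alpha<r(1-\delta)-1$ all enter — and (ii) the order of the two limits in the sharpness step, where $M$ must be sent to infinity \emph{after} $N$ so that both the edge correction $\varkappa^{(M+1)\theta}$ and the fraction $\tfrac{N-M}{N}$ reach their limits. I expect (ii) to be the only place where the write‑up must be handled with some attention; everything else is routine.
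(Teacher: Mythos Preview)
Your proof is correct. Both your argument and the paper's rest on the same Minkowski-type upper bound: the paper applies the change-of-variable identity $\frac{1}{|x|}\int_{B(e,|x|)}f(|t|)\,dt=\int_{B(e,1)}f(|x|\cdot|t|)\,dt$ and then the Minkowski integral inequality, which is exactly your decomposition $S=\sum_{j\ge0}\varkappa^{j\theta}\tau_j$ and the triangle inequality, rewritten in integral rather than sequence form. The real difference is in the sharpness step. The paper follows Hardy's classical trick, testing with the near-extremal powers $f_n(x)=|x|^{-(\alpha+\delta r+1)/r-\varepsilon_n}\mathbb{1}_{|x|\ge1}$ and letting $\varepsilon_n\to0$; you instead use the truncated indicators $c=\mathbb{1}_{\{1,\dots,N\}}$ with a double limit $N\to\infty$, then $M\to\infty$. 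Both work; your version avoids computing with the perturbation parameter at the cost of tracking two limits, while the paper's version is a one-parameter family but requires slightly more integration. Your explicit discretisation to $\ell^r(\Z)$ is arguably cleaner for this ultrametric setting and makes the role of the hypothesis $\theta=\alpha/r-1/r'+\delta<0$ completely transparent.
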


In particular for graded $\K$-Lie groups we can provide a much better result:

\begin{coro}\label{corostronghardyhomopadicLie}
Let $G$ be a graded $\K$-Lie group with homogeneous dimension {\normalfont{$\emph{Q}$}}. Let $1<r<\infty$ and $- \infty < \alpha < (r-1)Q - \delta r $. Then the norm of the fractional Hardy operator in $\mathcal{L} (L^r_{\alpha + \delta r } (G) , L^r_\alpha (G))$ equals the norm of its restriction to radial functions and $$\|H_\delta \|_{\mathcal{L}(L^r_{\alpha + \delta r } (G) , L^r_\alpha (G))} = \frac{1 - q^{-\normalfont{\emph{Q}}}}{1 - q^{\frac{\alpha}{r} - \frac{Q}{r'} + \delta}},$$where $\frac{1}{r} + \frac{1}{r'} =1$.
\end{coro}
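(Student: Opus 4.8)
The plan is to obtain the Corollary from Theorem \ref{teosharpestimateradialfunct} by a change of parameters, the only genuinely new ingredient being that, for the Hardy operator, the operator norm on the full weighted space coincides with the norm of its restriction to radial functions.

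First I would match the graded-group set-up with the general Vilenkin one. With the sequence $\mathscr{G} = \{G_n\}_{n\in\Z}$ coming from the dilations, $G$ is a constant-order Vilenkin group with $\varkappa = |G_n/G_{n+1}| = q^Q$, and by Definition \ref{defimetricgradedgroups} the metric $|\cdot|_G$ takes the value $q^{-n}$ on $G_n\setminus G_{n+1}$, so that $|\cdot|_G^Q = |\cdot|_\mathscr{G}$ and $B(e,|x|_G) = G_n$ whenever $x \in G_n \setminus G_{n+1}$. Consequently, for such $x$,
$$H_\delta f(x) = \frac{1}{|x|_G^{Q-\delta}}\int_{G_n} f\,dt = \frac{1}{|x|_\mathscr{G}^{\,1-\delta/Q}}\int_{G_n} f\,dt,$$
so the graded-group fractional Hardy operator of parameter $\delta \in [0,Q)$ equals the general-Vilenkin fractional Hardy operator of parameter $\delta' := \delta/Q \in [0,1)$ attached to the metric $|\cdot|_\mathscr{G}$; and $L^r_\gamma(G) = L^r(G,|\cdot|_G^\gamma) = L^r(G,|\cdot|_\mathscr{G}^{\gamma/Q})$, so that in the $|\cdot|_\mathscr{G}$-picture the relevant weight exponents are $\alpha/Q$ and $(\alpha+\delta r)/Q$. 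Since $\alpha/Q < r(1-\delta/Q) - 1$ is equivalent to $\alpha < (r-1)Q - \delta r$, Theorem \ref{teosharpestimateradialfunct} applies with $\varkappa = q^Q$ and gives
$$\|H_\delta\|_{\mathcal{L}(\Tilde{L}^r_{\alpha+\delta r}(G),\Tilde{L}^r_\alpha(G))} = \frac{1-q^{-Q}}{1-q^{\frac{\alpha}{r}-\frac{Q}{r'}+\delta}}.$$

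It remains to show $\|H_\delta\|_{\mathcal{L}(L^r_{\alpha+\delta r}(G),L^r_\alpha(G))} = \|H_\delta\|_{\mathcal{L}(\Tilde{L}^r_{\alpha+\delta r}(G),\Tilde{L}^r_\alpha(G))}$; the inequality ``$\geq$'' is immediate, and for ``$\leq$'' I would use radialisation. Given $f$, define $\widetilde f$ to be constant on each sphere $S_m := G_m\setminus G_{m+1}$ with value the average $|S_m|^{-1}\int_{S_m} f\,d\mu$, where $|S_m| = (1-q^{-Q})q^{-Qm} > 0$. The two key facts are: (i) $H_\delta f(x)$ depends on $f$ only through $\int_{G_n} f$ with $G_n = \bigsqcup_{m\geq n} S_m$, and $\int_{S_m} f = \int_{S_m}\widetilde f$, so $H_\delta f = H_\delta\widetilde f$ --- in particular $H_\delta f$ is automatically radial; and (ii) because the weight $|\cdot|_G^{\alpha+\delta r}$ is constant on each $S_m$, Jensen's inequality on each sphere yields $\|\widetilde f\|_{L^r_{\alpha+\delta r}(G)} \leq \|f\|_{L^r_{\alpha+\delta r}(G)}$. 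Combining (i) and (ii),
$$\|H_\delta f\|_{L^r_\alpha(G)} = \|H_\delta\widetilde f\|_{L^r_\alpha(G)} \leq \|H_\delta\|_{\mathcal{L}(\Tilde{L}^r_{\alpha+\delta r}(G),\Tilde{L}^r_\alpha(G))}\|\widetilde f\|_{L^r_{\alpha+\delta r}(G)} \leq \|H_\delta\|_{\mathcal{L}(\Tilde{L}^r_{\alpha+\delta r}(G),\Tilde{L}^r_\alpha(G))}\|f\|_{L^r_{\alpha+\delta r}(G)},$$
which gives the remaining inequality and hence the Corollary.

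A minor point to settle along the way is that $\int_{G_n} f$ is well defined for $f \in L^r_{\alpha+\delta r}(G)$: this follows from Hölder's inequality and the finiteness of $\int_{G_n}|t|_G^{-(\alpha+\delta r)(r'-1)}\,dt$, which in turn holds precisely because $\alpha + \delta r < (r-1)Q$, itself a consequence of the hypothesis. I expect the reparametrisation bookkeeping in the first step --- keeping $\delta/Q$, $\alpha/Q$ and $\varkappa = q^Q$ consistent throughout and matching the admissible ranges of $\alpha$ --- together with the verification that radialisation commutes with $H_\delta$ and contracts the weighted norms, to be the only places demanding care; neither is a serious obstacle.
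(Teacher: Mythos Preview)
Your proof is correct and follows essentially the same route as the paper: both reduce to Theorem \ref{teosharpestimateradialfunct} after showing that averaging $f$ over each sphere $S_m$ produces a radial function with the same image under $H_\delta$ and no larger weighted norm (you use Jensen, the paper uses H{\"o}lder, which amount to the same thing here). Your explicit reparametrisation $\delta' = \delta/Q$, $\alpha' = \alpha/Q$, $\varkappa = q^Q$ is a point the paper leaves implicit, but the substance of the argument is identical.
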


\begin{rem}
It is important to point out that in all the statements about constant-Vilenkin groups the weighted space $L^r_\alpha (G)$ is defined in terms of the ultrametric function $|\cdot |_\mathscr{G}$ described in Definition \ref{metricvilenkingroups}, while for graded $\K$-Lie groups we always consider the weighted space $L^r_\alpha (G)$ as defined in terms of the homogeneous quasi-norm introduced in Definition \ref{defimetricgradedgroups}. 
\end{rem}

In the same way, we have the following estimates for the adjoint operator: 
\begin{teo}\label{teosharpstrongfractionaladjointHardyradial}
Let $G$ be a constant-order Vilenkin group, let us say $|G_n / G_{n+1}| = \varkappa$ for all $n \in \Z$. Let $1<r<\infty$, $0 \leq \delta < 1$ and $-1 < \alpha < \infty $ be real numbers. Then $$\|H_\delta^* \|_{\mathcal{L}(\Tilde{L}^r_{\alpha+ \delta r} (G) , \Tilde{L}^r_\alpha (G))} = \frac{1 - \varkappa^{-1}}{ \varkappa^{\frac{\alpha+1}{r}} -1}.$$ In particular when $\delta=0$ $$\|H^* \|_{\mathcal{L}( \Tilde{L}^r_\alpha (G))} = \frac{1 - \varkappa^{-1}}{ \varkappa^{\frac{\alpha + 1}{r}} - 1}.$$
\end{teo}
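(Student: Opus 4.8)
The plan is to reduce the operator norm computation for $H_\delta^*$ acting on radial functions to an explicit weighted Hardy-type inequality for sequences indexed by $\Z$, exactly paralleling the treatment of $H_\delta$ in Theorem \ref{teosharpestimateradialfunct}. First I would observe that for a constant-order Vilenkin group with $|G_n/G_{n+1}|=\varkappa$ and the normalisation $|G_0|=1$, one has $|G_n| = \varkappa^{-n}$, so that the metric $|\cdot|_{\mathscr{G}}$ takes only the values $\varkappa^{-n}$, $n\in\Z$, and the "sphere" $S_n := G_n\setminus G_{n+1}$ has measure $|G_n|-|G_{n+1}| = \varkappa^{-n}(1-\varkappa^{-1})$. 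A radial function $f$ is constant on each $S_n$; write $f|_{S_n} = a_n$. Then $|x|^\beta$ is constant on $S_n$ equal to $\varkappa^{-n\beta}$, and the $L^r_\alpha$-norm becomes $\|f\|_{\Tilde L^r_\alpha}^r = (1-\varkappa^{-1})\sum_{n\in\Z} |a_n|^r \varkappa^{-n(\alpha+1)}$.

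Next I would compute $H_\delta^* f$ on a sphere $S_m$: for $x\in S_m$, the region $\{|y|>|x|\}$ is $\bigcup_{n<m} S_n$, and $|y|^{1-\delta}$ is constant $=\varkappa^{-n(1-\delta)}$ on $S_n$, so
\[
H_\delta^* f(x) = \sum_{n<m} a_n\, \varkappa^{n(1-\delta)}\,|S_n| = (1-\varkappa^{-1})\sum_{n<m} a_n\, \varkappa^{n(1-\delta)}\varkappa^{-n} = (1-\varkappa^{-1})\sum_{n<m} a_n\,\varkappa^{-n\delta}.
\]
Hence $H_\delta^* f$ is again radial; denoting its value on $S_m$ by $b_m = (1-\varkappa^{-1})\sum_{n<m} a_n \varkappa^{-n\delta}$, the problem becomes: find the best constant $C$ in
\[
(1-\varkappa^{-1})\sum_{m\in\Z} |b_m|^r \varkappa^{-m(\alpha+1)} \le C^r (1-\varkappa^{-1})\sum_{n\in\Z}|a_n|^r \varkappa^{-n(\alpha+\delta r+1)}.
\]
Substituting $c_n := a_n \varkappa^{-n\delta}$ absorbs the fractional parameter and turns the right side into $\sum |c_n|^r \varkappa^{-n(\alpha+1)}$ while $b_m = (1-\varkappa^{-1})\sum_{n<m} c_n$; so it suffices to treat the case $\delta=0$, i.e. a one-sided discrete Hardy inequality with geometric weight $\varkappa^{-n(\alpha+1)}$, and then read off the general $\delta$ from it.

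For the discrete inequality I would use the standard Schur-test / convolution argument on $\Z$: writing $w_n = \varkappa^{-n(\alpha+1)}$, the operator $(Tc)_m = \sum_{n<m} c_n$ is, after the change of variables pulling out the weights, convolution on $\Z$ with the kernel $k(j) = \mathbb{1}_{j>0}\,\varkappa^{-j(\alpha+1)/r}$ acting between unweighted $\ell^r$ spaces — and such an operator has norm exactly $\|k\|_{\ell^1} = \sum_{j\ge 1}\varkappa^{-j(\alpha+1)/r} = \varkappa^{-(\alpha+1)/r}/(1-\varkappa^{-(\alpha+1)/r}) = 1/(\varkappa^{(\alpha+1)/r}-1)$, which converges precisely when $\alpha > -1$ (matching the hypothesis; the restriction $\delta<1$ ensures $H_\delta^*$ is well-defined but the summability is governed only by $\alpha>-1$). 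Multiplying by the prefactor $(1-\varkappa^{-1})$ coming from $|S_n|$ gives the claimed value $\frac{1-\varkappa^{-1}}{\varkappa^{(\alpha+1)/r}-1}$. Sharpness, i.e. that the norm is attained as a supremum over radial functions, follows from the fact that equality in Young's inequality for convolution by a positive $\ell^1$ kernel is approached by normalised truncations of the constant sequence — concretely, testing on $c_n = \varkappa^{n(\alpha+1)/r}\mathbb{1}_{[-N,N]}(n)$ and letting $N\to\infty$; I would include this computation to confirm the operator norm is exactly (not merely bounded by) the stated constant.

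The main obstacle I anticipate is not any single estimate but the bookkeeping: making sure the weight exponents, the measures $|S_n|$, and the normalisation $|G_0|=1$ are tracked consistently through the reduction, and correctly identifying which hypothesis ($\alpha>-1$) controls convergence of the kernel sum versus which (the ranges on $\delta$) is needed merely for the operator to be densely defined. A secondary point requiring care is justifying that the operator norm over all of $\Tilde L^r_\alpha$ really equals the supremum over the dense subclass of finitely-supported radial sequences, so that the extremising sequence argument genuinely computes the norm; this is routine by density but should be stated. Once the reduction to convolution on $\Z$ is in place, the rest is the classical sharp Young/Hardy computation.
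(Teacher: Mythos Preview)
Your proposal is correct. The discretisation to sequences on $\Z$, the substitution $c_n=a_n\varkappa^{-n\delta}$ reducing to $\delta=0$, and the identification of the resulting operator as convolution by the $\ell^1$ kernel $k(j)=\mathbb{1}_{j>0}\varkappa^{-j(\alpha+1)/r}$ all check out, and Young's inequality is indeed sharp for convolution by a positive $\ell^1$ kernel on $\Z$, so the constant comes out as stated.

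The paper's proof is essentially the same computation expressed in integral rather than sequence language. Instead of discretising, it uses the change of variable $|y|=|x|\cdot|t|$ (Proposition \ref{prochangeofvariable2}, the adjoint analogue of Proposition \ref{prointegralradialfunctions}) to write $H_\delta^* f(x)=|x|^\delta\int_{|t|>1}f(|x||t|)|t|^{\delta-1}dt$, and then applies Minkowski's integral inequality in $t$ --- which, for a translation-invariant positive kernel, is precisely Young's inequality. The resulting integral $\int_{|t|>1}|t|^{-1-(\alpha+1)/r}dt$ is the continuous version of your $\|k\|_{\ell^1}$. For sharpness the paper tests on the one-sided family $f_n(x)=|x|^{-(\alpha+\delta r+1)/r+\varepsilon_n}\mathbb{1}_{|x|<1}$ with $\varepsilon_n\to 0$, whereas you use symmetric truncations of the ``constant'' sequence; both are standard approximate extremisers and give the same limit. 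Your framing has the advantage of making the convolution structure and the role of the hypothesis $\alpha>-1$ completely transparent, and the reduction to $\delta=0$ is a clean observation the paper does not isolate; the paper's route avoids discretising and so requires no separate density argument for finitely supported sequences.
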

Similar to Corollary \ref{corostronghardyhomopadicLie}, with a suitable change of variable we can prove an analogous estimate for the adjoint operator:
\begin{coro}\label{corostrongadjointhardypadicLiehomogeneous}
Let $G$ be a graded $\K$-Lie group with homogeneous dimension {\normalfont{$\emph{Q}$}}. Let $1<r<\infty$ and $ - Q < \alpha < \infty $ be real numbers. Then the norm of the adjoint fractional Hardy operator in $\mathcal{L} (L^r_{\alpha + \delta r } (G) , L^r_\alpha (G))$ equals the norm of its restriction to radial functions and $$\|H_\delta^* \|_{\mathcal{L}(L^r_{\alpha + \delta r } (G) , L^r_\alpha (G))} = \frac{1 - q^{-\normalfont{\emph{Q}}}}{q^{\frac{\alpha +Q}{r}} - 1}.$$
\end{coro}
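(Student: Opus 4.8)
The plan is to reproduce, for the adjoint operator, the two–step strategy behind Corollary \ref{corostronghardyhomopadicLie}. First I would reduce the statement to the radial estimate already established in Theorem \ref{teosharpstrongfractionaladjointHardyradial} by exploiting how the homogeneous quasi-norm $|\cdot|_G$ of Definition \ref{defimetricgradedgroups} rescales the Vilenkin distance $|\cdot|_\mathscr{G}$ of Definition \ref{metricvilenkingroups}. Then I would upgrade the radial bound to all of $L^r_\alpha(G)$ by a symmetrisation argument based on the observation that $H_\delta^*$ depends on $f$ only through its averages over the annuli $G_n\setminus G_{n+1}$.

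For the reduction, with the normalisation $|G_0|=1$ and $|G_n/G_{n+1}|=q^{Q}$ we have $|G_n|=q^{-nQ}$, so on $G_n\setminus G_{n+1}$ one gets $|x|_\mathscr{G}=q^{-nQ}$ and $|x|_G=|x|_\mathscr{G}^{1/Q}=q^{-n}$; hence $|x|_G^{\beta}=|x|_\mathscr{G}^{\beta/Q}$ for every real $\beta$. Two consequences: the weighted space $L^r_\beta(G)$ built from $|\cdot|_G$ is isometrically the weighted space $L^r_{\beta/Q}(G)$ built from $|\cdot|_\mathscr{G}$; and, since $|x|_G^{Q-\delta}=|x|_\mathscr{G}^{1-\delta/Q}$, the operator $H_\delta^*$ of Definition \ref{defihardyoperator} on the graded group coincides with the abstract Vilenkin operator $H^*_{\delta/Q}$, whose fractional parameter $\delta/Q$ lies in $[0,1)$. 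Because the target exponent $\alpha/Q$ and the source exponent $(\alpha+\delta r)/Q=\alpha/Q+(\delta/Q)r$ stand in exactly the relation required by Theorem \ref{teosharpstrongfractionaladjointHardyradial}, applying that theorem with $\varkappa=q^{Q}$ (the constraint $-1<\alpha/Q$ being the same as $-Q<\alpha$) gives
\[
\|H_\delta^*\|_{\mathcal{L}(\Tilde{L}^r_{\alpha+\delta r}(G),\Tilde{L}^r_\alpha(G))}=\frac{1-q^{-Q}}{q^{Q(\alpha/Q+1)/r}-1}=\frac{1-q^{-Q}}{q^{(\alpha+Q)/r}-1}.
\]

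To pass from radial functions to all of $L^r_\alpha(G)$, note that for $x\in G_n\setminus G_{n+1}$ one has $H_\delta^* f(x)=\sum_{m<n}q^{m(Q-\delta)}\int_{G_m\setminus G_{m+1}}f(y)\,dy$, which shows that $H_\delta^* f$ is always radial and that it only sees the numbers $\int_{G_m\setminus G_{m+1}}f$. Thus, letting $\mathrm{Rad}$ be the averaging operator that replaces $f$ on each annulus by its mean over that annulus, we get $H_\delta^* f=H_\delta^*(\mathrm{Rad}\,f)$. Since $|x|_G^{\alpha}$ is constant on every annulus, Jensen's inequality (we use $r>1$) makes $\mathrm{Rad}$ a contraction on both $L^r_\alpha(G)$ and $L^r_{\alpha+\delta r}(G)$, so $\|H_\delta^* f\|_{L^r_\alpha(G)}=\|H_\delta^*(\mathrm{Rad}\,f)\|_{L^r_\alpha(G)}\le\|H_\delta^*\|_{\mathcal{L}(\Tilde{L}^r_{\alpha+\delta r},\Tilde{L}^r_\alpha)}\|f\|_{L^r_{\alpha+\delta r}(G)}$; the reverse inequality is trivial, so the full operator norm equals the radial one and the displayed formula is the answer.

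The main point to handle with care is the compatibility of the symmetrisation $\mathrm{Rad}$ with the weighted norms: the argument goes through precisely because, on a constant-order group, the weight $|x|_G^{\alpha}$ (equivalently $|x|_\mathscr{G}^{\alpha/Q}$) is constant on the very sets $G_m\setminus G_{m+1}$ over which $\mathrm{Rad}$ averages, so Jensen loses nothing; for a weight not adapted to the filtration this step would break. One should also check that $H_\delta^*$ is well defined on $L^r_{\alpha+\delta r}(G)$ for the stated range of $\alpha$, which is a routine Hölder estimate on the above series using $\alpha>-Q$, and that $\mathrm{Rad}$ commutes with $H_\delta^*$; both are immediate from the discrete representation.
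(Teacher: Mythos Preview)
Your proof is correct and follows the same two-step strategy the paper indicates (by analogy with its proof of Corollary~\ref{corostronghardyhomopadicLie}): produce a radial companion $g$ of $f$ with $H_\delta^* g = H_\delta^* f$, show $\|g\|\le\|f\|$ in the weighted norm, and then invoke Theorem~\ref{teosharpstrongfractionaladjointHardyradial} with $\varkappa=q^{Q}$. The implementations differ in a useful way. The paper writes the radial companion via the dilations, $g(x)=\frac{1}{1-q^{-Q}}\int_{|t|_G=1}f(\lambda(x)t)\,dt$, and controls $\|g\|$ by H\"older together with a change of variable that is only available on a graded group; you instead define $\mathrm{Rad}$ as plain annular averaging and apply Jensen on each shell $G_m\setminus G_{m+1}$, using only that the weight $|x|_G^{\alpha}$ is constant there. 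The two constructions give the same $g=\mathrm{Rad}\,f$, but your argument never touches the dilations, so it in fact shows that on \emph{any} constant-order Vilenkin group the full operator norm of $H_\delta^*$ (and, by the same reasoning, of $H_\delta$) already equals its norm on radial functions---a point the paper explicitly leaves open after the proof of Corollary~\ref{corostronghardyhomopadicLie}.
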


For the fractional Hardy operator it is also possible to estimate its operator norm when acting on weak $L^r$-spaces:

\begin{defi}\normalfont\label{defiweakspaces}
Let $w$ be a non-negative locally integrable function. The weighted weak Lebesgue space $L^{r, \infty}_w (G)$ is defined as  the  set  of  all  measurable  functions $f$ belonging to $L^1_{loc} (G)$ and satisfying the condition

$$\| f \|_{L^{r, \infty}_w (G)} := \sup_{\lambda >0} \lambda \cdot w \Big(\{ x \in G \esp : \esp |f(x)|> \lambda\} \Big)^{1/r} < \infty.$$Here, for a measurable set $A$, it is defined $$w(A) := \int_A w(x) dx. $$
\end{defi}

For the norm of the fractional Hardy operator we have the following estimate:

\begin{teo}\label{teoweakestHardy}
Let $G$ be a constant-order Vilenkin group, let us say $|G_n/G_{n+1}|=\varkappa$ for all $n \in \Z$. Let $1<r< \infty,$ $1 \leq s < \infty,$ $-\infty < \beta<r-1$ and  $0 \leq \delta < (\beta + 1)/r$ be real numbers, and take $\gamma \in \R$ such that $\frac{\beta + 1}{r} - \delta = \frac{\gamma+1}{s}$. Then $$\| H_\delta \|_{\mathcal{L}(L^r_\beta (G), L^{s,\infty}_\gamma (G))} = \Big( \frac{1 - \varkappa^{-1}}{1- \varkappa^{-(\gamma + 1)}} \Big)^{1/s} \Big( \frac{1 - \varkappa^{-1}}{1- \varkappa^{\frac{\beta}{r-1} -1}}  \Big)^{1/r'} = \Big( \frac{1 - \varkappa^{-1}}{1- \varkappa^{-s(\frac{\beta +1}{r} - \delta )}} \Big)^{1/s} \Big( \frac{1 - \varkappa^{-1}}{1- \varkappa^{\frac{\beta}{r-1} -1}}  \Big)^{1/r'},$$where $\frac{1}{r} + \frac{1}{r'} =1.$
\end{teo}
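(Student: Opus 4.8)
The plan is the classical two–sided scheme: bound the operator norm from above by Hölder's inequality together with the ultrametric fact that a super–level set of $H_\delta f$ is a ball, and then show the bound is attained on a power function.

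First I would make two harmless reductions. Since $|H_\delta f|\le H_\delta|f|$ pointwise and neither $\|\cdot\|_{L^r_\beta}$ nor $\|\cdot\|_{L^{s,\infty}_\gamma}$ sees the sign of $f$, we may assume $f\ge 0$. Then for $f\ge 0$ I would radialize: on each "sphere" $S_n:=G_n\setminus G_{n+1}$ replace $f$ by its average over $S_n$. Because $|x|$ is constant on $S_n$, $|S_n|=(1-\varkappa^{-1})\varkappa^{-n}$, and $B(e,\varkappa^{-n})=G_n$ is a union of spheres, radialization leaves $\int_{B(e,|x|)}f$ and hence $H_\delta f$ unchanged, while Jensen's inequality (the weight $|t|^\beta$ being constant on each $S_n$) does not increase $\|f\|_{L^r_\beta}$; so the norm is computed on radial $f$, and from here everything is bookkeeping with the sequence $a_n:=f\!\restriction_{S_n}$ via $H_\delta f\!\restriction_{S_n}=\varkappa^{n(1-\delta)}\int_{G_n}f$.

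For the upper bound, with $|x|=\varkappa^{-n}$ I would write $\int_{G_n}f=\int_{G_n}(f|t|^{\beta/r})\,|t|^{-\beta/r}\,dt$ and apply Hölder with exponents $r,r'$, getting $\int_{G_n}f\le\|f\|_{L^r_\beta}\big(\int_{G_n}|t|^{-\beta r'/r}dt\big)^{1/r'}$. Since $\beta<r-1$, the remaining integral is a convergent geometric series equal to $\frac{1-\varkappa^{-1}}{1-\varkappa^{-\sigma}}|x|^{\sigma}$ with $\sigma:=1-\beta/(r-1)>0$, so setting $M:=\big(\frac{1-\varkappa^{-1}}{1-\varkappa^{-\sigma}}\big)^{1/r'}$ and using $\sigma/r'-(1-\delta)=-(\gamma+1)/s$ one obtains the pointwise bound $H_\delta f(x)\le M\|f\|_{L^r_\beta}|x|^{-(\gamma+1)/s}$. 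Because $(\gamma+1)/s>0$, the set $\{H_\delta f>\lambda\}$ is contained in a ball $G_{n_0}$ around $e$ with $\varkappa^{-n_0(\gamma+1)}<(M\|f\|_{L^r_\beta}/\lambda)^{s}$, and since $w_\gamma(G_{n_0})=\frac{1-\varkappa^{-1}}{1-\varkappa^{-(\gamma+1)}}\varkappa^{-n_0(\gamma+1)}$ (again a convergent geometric series, as $\gamma+1>0$), multiplying by $\lambda^s$ gives $\|H_\delta f\|_{L^{s,\infty}_\gamma}\le\big(\frac{1-\varkappa^{-1}}{1-\varkappa^{-(\gamma+1)}}\big)^{1/s}M\,\|f\|_{L^r_\beta}$, which is precisely the constant in the statement (the second displayed form being equal by the hypothesis $\frac{\beta+1}{r}-\delta=\frac{\gamma+1}{s}$).

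For the lower bound I would test on $f(t)=|t|^{-\beta/(r-1)}\mathbb{1}_{G_0}(t)$ — and, if needed, on the truncations $|t|^{-\beta/(r-1)}\mathbb{1}_{G_{-k}}$ with $k\to\infty$; by dilation invariance the resulting ratio is independent of the ball. For this $f$ both Hölder steps above are equalities: one computes $\|f\|_{L^r_\beta}^r=\frac{1-\varkappa^{-1}}{1-\varkappa^{-\sigma}}$, and for $x\in S_n$, $n\ge 0$, $H_\delta f(x)=\frac{1-\varkappa^{-1}}{1-\varkappa^{-\sigma}}\varkappa^{n\rho}$ with $\rho:=\beta/(r-1)-\delta$, while across spheres $S_n$ with $n<0$ the function $H_\delta f$ is non-decreasing in $n$. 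Hence the super–level set of $H_\delta f$ at the value $H_\delta f\!\restriction_{S_0}$ is exactly the ball $G_0$, of $\gamma$–weight $\frac{1-\varkappa^{-1}}{1-\varkappa^{-(\gamma+1)}}$, so letting $\lambda\uparrow H_\delta f\!\restriction_{S_0}$ yields $\|H_\delta f\|_{L^{s,\infty}_\gamma}\ge\frac{1-\varkappa^{-1}}{1-\varkappa^{-\sigma}}\big(\frac{1-\varkappa^{-1}}{1-\varkappa^{-(\gamma+1)}}\big)^{1/s}$; dividing by $\|f\|_{L^r_\beta}=\big(\frac{1-\varkappa^{-1}}{1-\varkappa^{-\sigma}}\big)^{1/r}$ and using $1-\tfrac1r=\tfrac1{r'}$ and $1-\varkappa^{-\sigma}=1-\varkappa^{\beta/(r-1)-1}$ gives exactly $C$. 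One still has to check that the weak–norm supremum over $\lambda$ for the test function is attained at this level and not below it, which amounts to $\varkappa^{\,m(s\rho-(\gamma+1))}\le 1$ for $m\ge 0$, and this holds because $s\rho-(\gamma+1)=s\big(\tfrac{\beta}{r-1}-\tfrac{\beta+1}{r}\big)=\tfrac{s(\beta-(r-1))}{r(r-1)}<0$.

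The step I expect to be the main obstacle is this sharpness claim: one must verify that $H_\delta$ of the extremizer genuinely saturates the weak bound, i.e.\ that its super–level set at the critical height is a full ball rather than a thin shell; this is transparent when $\beta/(r-1)\ge\delta$ (so $\rho\ge 0$ and $H_\delta f$ is radially monotone), and in the remaining range of $\delta$ one should use a limiting family of truncated power functions (or a profile interpolating between $|t|^{-\beta/(r-1)}$ and $|t|^{-\delta}$) and push the estimate to the limit. Apart from that, the remaining work — identifying all the relevant sums as geometric series and matching the exponents $\sigma$, $\rho$, $(\gamma+1)/s$ — is routine once the correct test function is fixed.
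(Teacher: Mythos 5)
Your proposal follows essentially the same route as the paper: the upper bound via H\"older's inequality, the computation of $\int_{|y|\le|x|}|y|^{-\beta r'/r}dy$ as a geometric series, the inclusion of the super-level set in a ball whose $\gamma$-weight is again a geometric series, and then sharpness tested on the same extremizer $f_0(t)=|t|^{-\beta/(r-1)}\mathbb{1}_{G_0}(t)$. Your exponent bookkeeping ($\sigma$, $\rho$, $(\gamma+1)/s$) is correct, the radialization step is harmless (though unnecessary, since H\"older applies directly to general $f$), and in the range $\delta\le\beta/(r-1)$ your lower-bound argument (let $\lambda\uparrow C_0:=H_\delta f_0\!\restriction_{S_0}$, so that the level set contains the whole ball $G_0$ of weight $\tfrac{1-\varkappa^{-1}}{1-\varkappa^{-(\gamma+1)}}$) is a clean version of what the paper does by splitting the supremum over $\lambda<C_0$ and $\lambda\ge C_0$.

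The one point you flag as incomplete --- sharpness when $\beta/(r-1)<\delta<(\beta+1)/r$, i.e.\ $\rho<0$ --- is a genuine issue, but you should know that the paper's own proof does not resolve it either: its assertion that $A_\lambda=G_0\cup A_\lambda^2$ for $0<\lambda<C_0$, and its formula for $A_\lambda^1$ with the exponent $1/(\tfrac{\beta}{r-1}-\delta)$, both presuppose $\tfrac{\beta}{r-1}-\delta>0$; when $\rho<0$ the function $H_\delta f_0$ attains its maximum $C_0$ only on the unit sphere and decays toward $e$, so the critical level set is a thin shell rather than a full ball, and one can check that $\|H_\delta f_0\|_{L^{s,\infty}_\gamma}$ then falls strictly short of $C\,\|f_0\|_{L^r_\beta}$. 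Be aware also that your suggested fix is not obviously sufficient: for $\rho<0$ the two requirements for saturating the upper bound (H\"older equality on the outermost ball, which forces the profile $|t|^{-\beta/(r-1)}$ there, versus the level set filling the whole ball, which forces the tail mass of $f$ over $B(e,\varkappa^{-m})$ to decay no faster than $\varkappa^{-m(1-\delta)}$, i.e.\ a profile at least as singular as $|t|^{-\delta}$) pull in opposite directions, and boosting the inner values costs a non-negligible fraction of the $L^r_\beta$-norm, so a limiting family of truncated powers does not visibly close the gap. So your write-up is at least as complete as the paper's argument, and more honest about where the sharpness claim actually needs extra work (or a restriction such as $\delta\le\beta/(r-1)$).
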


Notice that in the case where $r=s$ and $\delta = 0$ the value of $\gamma$ has to be equal to $\beta$. It follows: 
\begin{coro}
Let $G$ be a constant-order Vilenkin group, let us say $|G_n / G_{n+1}| = \varkappa$ for all $n \in \Z$. Let $1<r< \infty$ and $- \infty <  \beta < r-1$ be given real numbers. Then: $$\| H \|_{\mathcal{L}(L^r_\beta (G), L^{r,\infty}_\beta (G))} = \Big( \frac{1 - \varkappa^{-1}}{1- \varkappa^{-(\beta + 1)}} \Big)^{1/r} \Big( \frac{1 - \varkappa^{-1}}{1- \varkappa^{\frac{\beta}{r-1} -1}}  \Big)^{1/r'},$$where $\frac{1}{r} + \frac{1}{r'} =1.$ In particular when $\beta=0$: $$\| H \|_{\mathcal{L}(L^r (G), L^{r,\infty} (G))} =1.$$
\end{coro}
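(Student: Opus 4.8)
The plan is to obtain this corollary as the single special case $s = r$, $\delta = 0$ of Theorem \ref{teoweakestHardy} (recall that $H = H_0$). The first step is to feed these values into the compatibility relation $\frac{\beta+1}{r} - \delta = \frac{\gamma+1}{s}$ appearing in that theorem: with $s = r$ and $\delta = 0$ it collapses to $\frac{\gamma+1}{r} = \frac{\beta+1}{r}$, which forces $\gamma = \beta$, so the target space is exactly $L^{r,\infty}_\beta(G)$ as in the corollary. The hypotheses $1 < r < \infty$ and $-\infty < \beta < r-1$ are common to both statements; for $\delta = 0$ the additional requirement $0 \le \delta < (\beta+1)/r$ of the theorem is satisfied precisely when $\beta > -1$, which is also exactly the range in which the right-hand side of the corollary is a finite positive real number (for $\beta \le -1$ the factor $1 - \varkappa^{-(\beta+1)}$ vanishes or changes sign, and both sides are then to be read as $+\infty$).

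The second step is to substitute $s = r$, $\delta = 0$, $\gamma = \beta$ into the closed-form expression furnished by Theorem \ref{teoweakestHardy}. Here $-s\left(\frac{\beta+1}{r} - \delta\right) = -(\beta+1)$, so the first factor becomes $\big((1-\varkappa^{-1})/(1-\varkappa^{-(\beta+1)})\big)^{1/r}$ while the $1/r'$-factor is unchanged; this is precisely the claimed value of $\|H\|_{\mathcal{L}(L^r_\beta(G),L^{r,\infty}_\beta(G))}$. For the final assertion I would then set $\beta = 0$: this makes both $\varkappa^{-(\beta+1)} = \varkappa^{-1}$ and $\varkappa^{\frac{\beta}{r-1}-1} = \varkappa^{-1}$, so both factors collapse to $1$ and $\|H\|_{\mathcal{L}(L^r(G),L^{r,\infty}(G))} = 1$.

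I do not expect any genuine obstacle here, since the entire argument is a substitution into an identity already established in Theorem \ref{teoweakestHardy}. The only point deserving a moment of attention is the observation that, in the regime $r = s$ and $\delta = 0$, the normalisation relation is rigid --- it pins down $\gamma = \beta$ rather than leaving a free parameter --- so that the corollary is literally one instance of the theorem rather than a one-parameter family of them. Should a self-contained argument be preferred, one could instead rerun the weak-type scheme directly: a layer-cake / Chebyshev estimate on the level sets of $Hf$ for the upper bound, together with a family of truncated radial power functions $|x|^{a}\mathbb{1}_{B(e,\varkappa^{-N})}$ as near-extremisers for the matching lower bound.
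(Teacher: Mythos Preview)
Your proposal is correct and matches the paper's approach exactly: the paper derives this corollary in one line by observing that setting $r=s$ and $\delta=0$ in Theorem~\ref{teoweakestHardy} forces $\gamma=\beta$, and then reading off the formula. Your additional remark that the hypothesis $0\le\delta<(\beta+1)/r$ of the theorem, at $\delta=0$, actually restricts to $\beta>-1$ (so that the stated range $-\infty<\beta<r-1$ in the corollary is slightly broader than what the theorem literally covers) is a point the paper passes over in silence; your interpretation of the $\beta\le-1$ case as both sides being $+\infty$ is a reasonable way to reconcile this.
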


In the special case  $r=1$ we got a similar estimation: 

\begin{teo}\label{teoweakHardyL1}
Let $G$ be a constant-order Vilenkin group, let us say $|G_n/G_{n+1}|=\varkappa$ for all $n \in \Z$. Let $1 \leq s< \infty,$ $0 \leq  \delta < 1$ and $ - \infty < \beta <  1 - \delta$ be real numbers, and take $\gamma \in \R$ such that $1 - \delta - \beta = \frac{\gamma+1}{s}$. Then $$\| H_\delta \|_{\mathcal{L}(L^1_{-\beta} (G), L^{s,\infty}_\gamma (G))} \leq \Big( \frac{1 - \varkappa^{-1}}{1- \varkappa^{-(\gamma + 1)}} \Big)^{1/s}= \Big( \frac{1 - \varkappa^{-1}}{1- \varkappa^{-s(1 - \delta - \beta)}} \Big)^{1/s} .$$In particular when $\beta=0$ it holds $$\| H \|_{\mathcal{L}(L^1 (G), L^{s,\infty}_\gamma (G))}= \frac{1 - \varkappa^{-1}}{1 - \varkappa^{- (\gamma+ 1)}}=\Big( \frac{1 - \varkappa^{-1}}{1- \varkappa^{-s(1 - \delta)}} \Big)^{1/s} .$$
\end{teo}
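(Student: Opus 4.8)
The plan is to reduce to nonnegative radial data, to compute $H_\delta f$ and the relevant weighted distribution function explicitly on the ``spheres'' $S_n:=G_n\setminus G_{n+1}$, and then to estimate the weak norm by a discrete geometric-series argument. First I would use $|H_\delta f|\leq H_\delta(|f|)$ to reduce to $f\geq 0$, and then observe that, since $B(e,|x|)=G_n$ whenever $|x|=|G_n|$, the value $H_\delta f(x)$ depends only on the numbers $b_k:=\int_{S_k}f$. Hence, replacing $f$ by the radial function equal to $b_k/|S_k|$ on each $S_k$ alters neither $H_\delta f$ nor $\|f\|_{L^1_{-\beta}(G)}$ (the weight $|\cdot|^{-\beta}$ is constant on each $S_k$), so I may assume $f=\sum_k c_k\mathbb{1}_{S_k}$ with $c_k\geq 0$.

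With the normalisation $|G_0|=1$ one has $|G_n|=\varkappa^{-n}$ and $|S_n|=(1-\varkappa^{-1})\varkappa^{-n}$, so $H_\delta f$ is constant on $S_n$ and equal to $v_n:=\varkappa^{n(1-\delta)}a_n$, where $a_n:=\sum_{k\geq n}b_k=\int_{G_n}f$. A direct computation gives $\|f\|_{L^1_{-\beta}(G)}=\sum_k\varkappa^{\beta k}b_k$ and $|\cdot|^\gamma(S_n)=(1-\varkappa^{-1})\varkappa^{-(\gamma+1)n}$, while the normalisation $1-\delta-\beta=(\gamma+1)/s$ both forces $\gamma+1>0$ and can be rewritten as $s(1-\delta)-(\gamma+1)=s\beta$.

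Now fix $\lambda>0$ and set $E_\lambda:=\{n\in\Z:v_n>\lambda\}$, so $\{x:|H_\delta f(x)|>\lambda\}\subseteq\bigcup_{n\in E_\lambda}S_n$. A short estimate (using $b_k\leq\varkappa^{-\beta k}\|f\|_{L^1_{-\beta}(G)}$ together with $\beta<1-\delta$) shows $v_n\to 0$ as $n\to-\infty$, so $E_\lambda$ is bounded below; let $m(\lambda):=\min E_\lambda$. Since every term $\varkappa^{-(\gamma+1)n}$ is positive and $\gamma+1>0$,
\[ \lambda^s\,|\cdot|^\gamma\Big(\bigcup_{n\in E_\lambda}S_n\Big)=\lambda^s(1-\varkappa^{-1})\sum_{n\in E_\lambda}\varkappa^{-(\gamma+1)n}\leq\frac{1-\varkappa^{-1}}{1-\varkappa^{-(\gamma+1)}}\,\lambda^s\varkappa^{-(\gamma+1)m(\lambda)}. \]
From $m(\lambda)\in E_\lambda$ one has $\lambda<v_{m(\lambda)}=\varkappa^{m(\lambda)(1-\delta)}a_{m(\lambda)}$, and with $s(1-\delta)-(\gamma+1)=s\beta$ this bounds the right-hand side by $\tfrac{1-\varkappa^{-1}}{1-\varkappa^{-(\gamma+1)}}\big(\varkappa^{\beta m(\lambda)}a_{m(\lambda)}\big)^s$. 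The key inequality is then $\varkappa^{\beta m}a_m=\varkappa^{\beta m}\sum_{k\geq m}b_k\leq\sum_{k\geq m}\varkappa^{\beta k}b_k\leq\|f\|_{L^1_{-\beta}(G)}$, valid because $\beta\geq 0$. Taking the supremum over $\lambda$ gives $\|H_\delta f\|_{L^{s,\infty}_\gamma(G)}\leq\big(\tfrac{1-\varkappa^{-1}}{1-\varkappa^{-(\gamma+1)}}\big)^{1/s}\|f\|_{L^1_{-\beta}(G)}$, and substituting $\gamma+1=s(1-\delta-\beta)$ produces the second form stated.

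The step I expect to be the main obstacle is the distribution-function analysis: unlike in the strong-type computations, $v_n=\varkappa^{n(1-\delta)}a_n$ is the product of an increasing and a non-increasing sequence, hence not monotone, so $E_\lambda$ need not be an interval and its weighted measure cannot be summed in closed form. The device that makes everything go through is to discard all of $E_\lambda$ except its minimal index $m(\lambda)$ --- legitimate only because $\gamma+1>0$, which makes $\sum_{n\geq m}\varkappa^{-(\gamma+1)n}$ a convergent geometric series dominating $\sum_{n\in E_\lambda}\varkappa^{-(\gamma+1)n}$ --- and then to control $\lambda$ at that single index via $\varkappa^{\beta m}a_m\leq\|f\|_{L^1_{-\beta}(G)}$. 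Finally, to see that the constant is attained when $\beta=0$ (so that ``$\leq$'' becomes equality there), I would test the operator on $f=\mathbb{1}_{G_N}$: a direct computation shows that, letting $N\to\infty$ (and, when $\delta>0$, letting the level $\lambda$ run up to $v_m$ with $m\to-\infty$), the ratio $\|H_\delta f\|_{L^{s,\infty}_\gamma(G)}/\|f\|_{L^1(G)}$ tends to $\big(\tfrac{1-\varkappa^{-1}}{1-\varkappa^{-(\gamma+1)}}\big)^{1/s}$.
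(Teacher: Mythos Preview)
Your proof is correct, but it takes a more circuitous route than the paper's. The paper proceeds via a single pointwise inequality: for $|y|\leq|x|$ and $\beta\geq 0$ one has $|y|^\beta\leq|x|^\beta$, so
\[
|H_\delta f(x)|\leq|x|^{\delta-1}\int_{|y|\leq|x|}|f(y)|\,|y|^{-\beta}|y|^\beta\,dy\leq|x|^{-(1-\delta-\beta)}\|f\|_{L^1_{-\beta}(G)}=C_f\,|x|^{-(\gamma+1)/s},
\]
which immediately gives $A_\lambda\subset\{x:|x|<(C_f/\lambda)^{s/(\gamma+1)}\}$; integrating $|x|^\gamma$ over this ball is a single geometric series. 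In other words, the paper's pointwise bound is exactly your key inequality $\varkappa^{\beta m}a_m\leq\|f\|_{L^1_{-\beta}(G)}$ applied at \emph{every} index $m$, not only at $m(\lambda)$, and it makes the worry about $E_\lambda$ failing to be an interval evaporate --- the superlevel set is automatically contained in a ball. Your minimum-index device reaches the same constant but through an extra layer of distribution-function analysis that the pointwise estimate renders unnecessary. For the sharpness when $\beta=0$, the paper tests on $f_0=\mathbb{1}_{G_0}$ and computes $\|H_\delta f_0\|_{L^{s,\infty}_\gamma}$ explicitly (the supremum over $\lambda$ is attained as $\lambda\to 0^+$), rather than taking a limit over $\mathbb{1}_{G_N}$. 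Finally, note that both your argument and the paper's use $\beta\geq 0$ at the crucial step, which is narrower than the stated range $-\infty<\beta<1-\delta$.
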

\begin{coro}
Let $s=1$ and $\delta =0$. Then $\gamma = - \beta$ and in consequence $$\| H \|_{\mathcal{L}( L^1_{-\beta} (G), L^{1 , \infty}_{-\beta} (G) )} \leq \Big( \frac{1 - \varkappa^{-1}}{1 - \varkappa^{-(1 - \beta)}} \Big)^{1/s}$$ 
\end{coro}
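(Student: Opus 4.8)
The plan is to deduce this directly from Theorem \ref{teoweakHardyL1} by specialising the parameters. First I would substitute $s=1$ and $\delta=0$ into that theorem. The standing restriction $-\infty<\beta<1-\delta$ then becomes $\beta<1$, which in particular forces $\gamma>-1$ and hence guarantees that the weight $|\cdot|^{\gamma}$ appearing in the definition of $L^{1,\infty}_{\gamma}(G)$ is locally integrable, so the weak space is well defined. Moreover $H_{\delta}=H_{0}=H$ by the convention fixed in Definition \ref{defihardyoperator}.

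Next I would examine the relation $\tfrac{\beta+1}{r}-\delta=\tfrac{\gamma+1}{s}$ that links the parameters, which in the present case (with the role of $r$ played by $1$, i.e.\ the relation $1-\delta-\beta=\tfrac{\gamma+1}{s}$ from the statement of Theorem \ref{teoweakHardyL1}) reads $1-\beta=\gamma+1$. Hence $\gamma=-\beta$ is forced, the target space $L^{s,\infty}_{\gamma}(G)$ is exactly $L^{1,\infty}_{-\beta}(G)$, and the source space $L^{1}_{-\beta}(G)$ is unchanged. Plugging these identifications into the conclusion of Theorem \ref{teoweakHardyL1} gives
$$\|H\|_{\mathcal{L}(L^{1}_{-\beta}(G),\,L^{1,\infty}_{-\beta}(G))}\leq\Big(\frac{1-\varkappa^{-1}}{1-\varkappa^{-(\gamma+1)}}\Big)^{1/s}=\Big(\frac{1-\varkappa^{-1}}{1-\varkappa^{-(1-\beta)}}\Big)^{1/s},$$
where the last equality just uses $\gamma+1=1-\beta$ (the exponent $1/s$ with $s=1$ being kept only to match the format of the preceding corollaries). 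This is precisely the asserted inequality.

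I do not expect any genuine obstacle: the corollary is a pure reformulation of the $s=1$, $\delta=0$ instance of Theorem \ref{teoweakHardyL1}, and the only point requiring attention is the elementary bookkeeping that the constraint on the parameters collapses to $\gamma=-\beta$. Should one prefer a self-contained proof, the sole nontrivial ingredient is the weak-type estimate of Theorem \ref{teoweakHardyL1} itself, whose argument proceeds by reduction to radial functions and a computation of the distribution function of $Hf$ on the shells $G_{n}\setminus G_{n+1}$ on which $|\cdot|$ is constant; but that work has already been carried out and can be quoted verbatim.
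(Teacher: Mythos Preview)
Your proposal is correct and matches the paper's approach: the corollary is stated immediately after Theorem \ref{teoweakHardyL1} without a separate proof, and is obtained precisely by the parameter specialisation $s=1$, $\delta=0$ that you describe, which forces $\gamma=-\beta$ via the relation $1-\delta-\beta=\frac{\gamma+1}{s}$.

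One minor inaccuracy in your closing aside: the paper's proof of Theorem \ref{teoweakHardyL1} does not proceed by reduction to radial functions. It is a direct pointwise estimate $|H_\delta f(x)|\le |x|^{-(1-\delta-\beta)}\|f\|_{L^1_{-\beta}(G)}$ (using $|y|^{\beta}\le|x|^{\beta}$ on the ball), followed by a computation of $\int_{B_\lambda}|x|^{\gamma}\,dx$ for the superlevel set. This does not affect the validity of your main argument, which only quotes the theorem.
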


For graded $\K$-Lie groups the previous results take the following special form: 

\begin{coro}\label{coroweakestHardyhomogeneousliegroups}
Let $G$ be a graded $\K$-Lie group with homogeneous dimension {\normalfont{$\emph{Q}$}}. Let $1<r< \infty,$ $1 \leq s < \infty,$ $-\infty < \beta<(r-1)Q$ and  $0 \leq \delta < (\beta + Q)/r$ be real numbers, and take $\gamma \in \R$ such that $\frac{\beta + Q}{r} - \delta = \frac{\gamma+Q}{s}$. Then $$\| H_\delta \|_{\mathcal{L}(L^r_\beta (G), L^{s,\infty}_\gamma (G))} = \Big( \frac{1 -q^{- \normalfont{\emph{Q}}}}{1- q^{-(\gamma + Q)}} \Big)^{1/s} \Big( \frac{1 - q^{-Q}}{1- q^{\frac{\beta}{r-1} -Q}}  \Big)^{1/r'} = \Big( \frac{1 - q^{-Q}}{1- q^{-s(\frac{\beta +Q}{r} - \delta )}} \Big)^{1/s} \Big( \frac{1 - q^{-Q}}{1- q^{\frac{\beta}{r-1} -Q}}  \Big)^{1/r'},$$where $\frac{1}{r} + \frac{1}{r'} =1.$
\end{coro}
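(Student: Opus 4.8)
The plan is to obtain this as a rescaled instance of Theorem \ref{teoweakestHardy}. Recall from Example \ref{exapadicnilpotentgroups} that a graded $\K$-Lie group $G$, equipped with the dilation-adapted filtration $\mathscr{G} = \{G_n\}_{n\in\Z}$, is a constant-order Vilenkin group with $\varkappa := |G_n/G_{n+1}| = q^{Q}$. The only difference between the data used here and the data of Theorem \ref{teoweakestHardy} is that for graded groups the Hardy operator and the weighted (weak) Lebesgue spaces are built from the homogeneous quasi-norm $|\cdot|_G$ of Definition \ref{defimetricgradedgroups} rather than from $|\cdot|_{\mathscr{G}}$, and $|x|_G = |x|_{\mathscr{G}}^{1/Q}$. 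So the proof consists of converting this change of metric into a change of parameters and then invoking Theorem \ref{teoweakestHardy}.

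First I would set up the precise dictionary between the two settings. For $x \in G_n \setminus G_{n+1}$ one has $B(e,|x|_G) = \{\, y : |y|_{\mathscr{G}}^{1/Q} \le |x|_{\mathscr{G}}^{1/Q} \,\} = \{\, y : |y|_{\mathscr{G}} \le |x|_{\mathscr{G}} \,\} = B(e,|x|_{\mathscr{G}}) = G_n$, so the two balls literally coincide; moreover $|x|_G^{Q-\delta} = |x|_{\mathscr{G}}^{1 - \delta/Q}$ and, for any $\alpha \in \R$, $|x|_G^{\alpha} = |x|_{\mathscr{G}}^{\alpha/Q}$ as functions on $G$. Consequently: (i) the operator $H_\delta$ of the graded-group case in Definition \ref{defihardyoperator} coincides, as an operator, with the operator $H_{\delta/Q}$ of the Vilenkin-group case in Definition \ref{defihardyoperator} applied to the constant-order Vilenkin group $(G,\mathscr{G})$; and (ii) the spaces $L^r_\beta(G)$ and $L^{s,\infty}_\gamma(G)$ taken in the graded-group convention are the same normed spaces, with the same norms, as $L^r_{\beta/Q}(G)$ and $L^{s,\infty}_{\gamma/Q}(G)$ in the Vilenkin convention. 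The delicate point is precisely that no multiplicative constant appears in either identification, which is what lets the sharp constant survive the reduction.

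Next, I would apply Theorem \ref{teoweakestHardy} to $(G,\mathscr{G})$ with $\varkappa = q^{Q}$, fractional parameter $\delta' := \delta/Q \in [0,1)$, and weight exponents $\beta' := \beta/Q$ and $\gamma' := \gamma/Q$. The hypotheses match up: $\beta' < r-1$ is equivalent to $\beta < (r-1)Q$; $0 \le \delta' < (\beta'+1)/r$ is equivalent to $0 \le \delta < (\beta+Q)/r$; and the balance condition $\frac{\beta'+1}{r} - \delta' = \frac{\gamma'+1}{s}$, after multiplying by $Q$, becomes $\frac{\beta+Q}{r} - \delta = \frac{\gamma+Q}{s}$. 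Theorem \ref{teoweakestHardy} then gives $\|H_\delta\|_{\mathcal{L}(L^r_\beta(G),\,L^{s,\infty}_\gamma(G))} = \big( \frac{1-\varkappa^{-1}}{1-\varkappa^{-(\gamma'+1)}} \big)^{1/s} \big( \frac{1-\varkappa^{-1}}{1-\varkappa^{\beta'/(r-1)-1}} \big)^{1/r'}$, and substituting $\varkappa^{-1} = q^{-Q}$, $\varkappa^{-(\gamma'+1)} = q^{-(\gamma+Q)}$ and $\varkappa^{\beta'/(r-1)-1} = q^{\beta/(r-1)-Q}$ yields exactly the first displayed expression of the statement; the second form then follows from the balance condition just as in Theorem \ref{teoweakestHardy}.

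I do not expect a genuine obstacle: the analytic content lives entirely in Theorem \ref{teoweakestHardy}, and this corollary is that theorem in the case $\varkappa = q^{Q}$ rewritten in the dilation-adapted quasi-norm --- the same device that produces Corollaries \ref{corostronghardyhomopadicLie} and \ref{corostrongadjointhardypadicLiehomogeneous} from their Vilenkin analogues, with the simplification that here no passage from radial to arbitrary functions is required, since Theorem \ref{teoweakestHardy} already computes the full operator norm. The only thing demanding attention is the bookkeeping: tracking the rescaling of $\delta,\beta,\gamma$ by the factor $1/Q$ and confirming that balls and weight functions transform with no spurious constant.
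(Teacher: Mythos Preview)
Your proposal is correct and is precisely the argument the paper has in mind: Corollary \ref{coroweakestHardyhomogeneousliegroups} is stated without proof, as an immediate specialization of Theorem \ref{teoweakestHardy} to the constant-order Vilenkin group $(G,\mathscr{G})$ with $\varkappa=q^{Q}$, rewritten via the relation $|x|_G=|x|_{\mathscr{G}}^{1/Q}$. Your dictionary $\delta'=\delta/Q$, $\beta'=\beta/Q$, $\gamma'=\gamma/Q$ and the verification that balls, operators, weights and balance conditions transfer with no extra constants is exactly the bookkeeping that makes this passage rigorous.
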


\begin{coro}\label{coroweakHardyhomogeneousL1}
Let $G$ be a graded $\K$-Lie group with homogeneous dimension {\normalfont{$\emph{Q}$}}. Let $1 \leq s< \infty,$ $0 \leq  \delta < Q$ and $ - \infty < \beta <  Q - \delta$ be real numbers, and take $\gamma \in \R$ such that $Q - \delta - \beta = \frac{\gamma+Q}{s}$. Then $$\| H_\delta \|_{\mathcal{L}(L^1_{-\beta} (G), L^{s,\infty}_\gamma (G))} \leq \Big( \frac{1 - q^{-Q}}{1- q^{-(\gamma + Q)}} \Big)^{1/s}= \Big( \frac{1 - q^{-Q}}{1- q^{-s(Q - \delta - \beta)}} \Big)^{1/s} .$$In particular when $\beta=0$ it holds $$\| H \|_{\mathcal{L}(L^1 (G), L^{s,\infty}_\gamma (G))}= \frac{1 - q^{-Q}}{1 - q^{-(\gamma+ Q)}}=\Big( \frac{1 - q^{-Q}}{1- q^{-s(Q - \delta)}} \Big)^{1/s} .$$
\end{coro}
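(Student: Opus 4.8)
\emph{Proof proposal.} The plan is to deduce this corollary from Theorem \ref{teoweakHardyL1} by regarding $G$ as a constant-order Vilenkin group and translating the statement through the dictionary between the two metrics. Recall from Example \ref{exapadicnilpotentgroups} that, endowed with the dilation-adapted sequence $\mathscr{G}=\{G_n\}_{n\in\Z}$, the group $G$ is a constant-order Vilenkin group with $\varkappa=q^{Q}$. By Definition \ref{defimetricgradedgroups} the homogeneous quasi-norm and the Vilenkin distance are related by $|x|_G=|x|_\mathscr{G}^{1/Q}$, which is a strictly increasing change of the value set; in particular the two metrics induce the same balls, $B(e,|x|_G)=B(e,|x|_\mathscr{G})$ for every $x\in G\setminus\{e\}$, and for every real exponent $\lambda$ one has the pointwise identity of weights $|x|_G^{\lambda}=|x|_\mathscr{G}^{\lambda/Q}$.

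From the weight identity one gets, for every real $\beta,\gamma$, the \emph{isometric} identifications $L^1_{-\beta}(G)=L^1_{-\beta/Q}(G)$ and $L^{s,\infty}_\gamma(G)=L^{s,\infty}_{\gamma/Q}(G)$, where on the left the spaces of Definition \ref{basicdefinitions} and \ref{defiweakspaces} are built from $|\cdot|_G$ and on the right from $|\cdot|_\mathscr{G}$ (the weights are equal as functions, not merely comparable, so the norms literally coincide). Comparing the two versions of Definition \ref{defihardyoperator}, since $|x|_G^{Q-\delta}=|x|_\mathscr{G}^{1-\delta/Q}$ and the domains of integration agree, the fractional Hardy operator $H_\delta$ of the graded group with parameter $\delta\in[0,Q)$ acts on functions exactly as the fractional Hardy operator $H_{\delta/Q}$ of the Vilenkin group $(G,\mathscr{G})$ with parameter $\delta/Q\in[0,1)$. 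Consequently the operator norm appearing in the corollary equals the operator norm appearing in Theorem \ref{teoweakHardyL1} for $(G,\mathscr{G})$ with the parameters $\varkappa=q^{Q}$, $\delta'=\delta/Q$, $\beta'=\beta/Q$, $\gamma'=\gamma/Q$.

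It then remains to verify that the hypotheses match and to simplify the resulting constant. The constraint $-\infty<\beta<Q-\delta$ is equivalent to $-\infty<\beta'<1-\delta'$, and dividing the relation $Q-\delta-\beta=(\gamma+Q)/s$ by $Q$ gives exactly $1-\delta'-\beta'=(\gamma'+1)/s$; hence Theorem \ref{teoweakHardyL1} applies and yields the bound $\bigl((1-\varkappa^{-1})/(1-\varkappa^{-(\gamma'+1)})\bigr)^{1/s}$. Substituting $\varkappa=q^{Q}$ and $\gamma'+1=(\gamma+Q)/Q$ turns this into $\bigl((1-q^{-Q})/(1-q^{-(\gamma+Q)})\bigr)^{1/s}$, and using $\gamma+Q=s(Q-\delta-\beta)$ gives the second displayed form; the case $\beta=0$ follows the same way from the corresponding special case of Theorem \ref{teoweakHardyL1}. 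I do not expect any genuine obstacle here: the whole content is the identity $|x|_G=|x|_\mathscr{G}^{1/Q}$ together with careful bookkeeping of the four exponents, and — unlike the strong-type corollaries — no radial-reduction step is needed, since Theorem \ref{teoweakHardyL1} already controls the full operator norm rather than only its restriction to $\Tilde{L}^1$.
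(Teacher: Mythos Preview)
Your proposal is correct and is exactly the approach the paper intends: the corollary is stated without proof as the specialization of Theorem \ref{teoweakHardyL1} to a graded $\K$-Lie group, and you have carefully carried out the parameter dictionary $\varkappa=q^{Q}$, $\delta'=\delta/Q$, $\beta'=\beta/Q$, $\gamma'=\gamma/Q$ that the paper leaves implicit. Your observation that no radial-reduction step is needed here (in contrast to Corollaries \ref{corostronghardyhomopadicLie} and \ref{corostrongadjointhardypadicLiehomogeneous}) is also correct, since Theorem \ref{teoweakHardyL1} already bounds the full operator norm.
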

A similar results holds for the adjoint operator: 

\begin{coro}\label{coroweakestadjointHardyhomogeneous}
Let $G$ be a graded $\K$-Lie group with homogeneous dimension {\normalfont{$\emph{Q}$}}. Let $1<r< \infty,$ $1 \leq s < \infty,$ $ 0 \leq \delta < Q$ and $\delta r - Q < \beta < \infty$ be real numbers, and take $\gamma \in \R$ such that $\frac{\beta + Q}{r} - \delta = \frac{\gamma+Q}{s}$. Then $$\| H_\delta^* \|_{\mathcal{L}(L^r_\beta (G), L^{s,\infty}_\gamma (G))} = \Big( \frac{1 - q^{-\normalfont{\emph{Q}}}}{1- q^{-(\gamma + Q)}} \Big)^{1/s} \Big( \frac{1 - q^{-Q}}{q^{(\frac{\beta + Q}{r } - \delta ) r' } -1}  \Big)^{1/r'},$$where $\frac{1}{r} + \frac{1}{r'} =1.$
\end{coro}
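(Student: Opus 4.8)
The plan is to follow the same scheme as in the proof of Theorem~\ref{teoweakestHardy} and of its graded specialization Corollary~\ref{coroweakestHardyhomogeneousliegroups}, now for the adjoint operator. Set $S_n := G_n \setminus G_{n+1}$, so that $|x|_G = q^{-n}$ on $S_n$ and $|S_n| = (1-q^{-Q})q^{-Qn}$. For $x \in S_n$ one computes directly
$$H_\delta^* f(x) = \sum_{m < n} q^{m(Q-\delta)} \int_{S_m} f(y)\, dy,$$
so $H_\delta^* f$ depends on $f$ only through the shell averages $|S_m|^{-1}\int_{S_m} f$. Letting $\tilde f$ be the radial function equal to $|S_m|^{-1}\int_{S_m}|f|$ on each $S_m$, one has the pointwise bound $|H_\delta^* f| \le H_\delta^*(|f|) = H_\delta^*(\tilde f)$, while Jensen's inequality gives $\|\tilde f\|_{L^r_\beta} \le \|f\|_{L^r_\beta}$. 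Hence, exactly as in Corollary~\ref{corostrongadjointhardypadicLiehomogeneous}, the operator norm $\|H_\delta^*\|_{\mathcal{L}(L^r_\beta(G),\,L^{s,\infty}_\gamma(G))}$ equals the supremum of $\|H_\delta^* f\|_{L^{s,\infty}_\gamma}/\|f\|_{L^r_\beta}$ over nonnegative radial $f$, so we may assume from now on that $f$ is of this form.

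The second step is discretization. A nonnegative radial $f$ is a sequence $(f_m)_{m\in\Z}$ with $f\equiv f_m$ on $S_m$, and the identity above becomes $H_\delta^* f \equiv g_n := (1-q^{-Q})\sum_{m<n} f_m q^{-m\delta}$ on $S_n$. Since $g_n$ is nondecreasing in $n$, each super-level set $\{H_\delta^* f > \lambda\}$ is a tail $\bigcup_{n\ge n_0}S_n$; summing the geometric series in $q^{-(\gamma+Q)n}$, which converges because the balance relation forces $\gamma + Q > 0$, yields
$$\|H_\delta^* f\|_{L^{s,\infty}_\gamma} = \left(\frac{1-q^{-Q}}{1-q^{-(\gamma+Q)}}\right)^{1/s}\sup_{n\in\Z} g_n\, q^{-(\gamma+Q)n/s}, \qquad \|f\|_{L^r_\beta}^r = (1-q^{-Q})\sum_{m\in\Z} f_m^r\, q^{-(\beta+Q)m}.$$
The statement is thus reduced to a sharp weighted discrete Hardy-type inequality for sequences indexed by $\Z$.

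For the upper bound I would bound $\sum_{m<n} f_m q^{-m\delta}$ by Hölder's inequality, writing the summand as $\bigl(f_m q^{-(\beta+Q)m/r}\bigr)\cdot q^{(\frac{\beta+Q}{r}-\delta)m}$. The hypothesis $\beta > \delta r - Q$ is precisely what makes the geometric factor $\sum_{m<n} q^{r'(\frac{\beta+Q}{r}-\delta)m}$ summable, and the balance relation $\frac{\beta+Q}{r}-\delta = \frac{\gamma+Q}{s}$ makes the power of $q$ cancel inside $\sup_n$; collecting the factors $(1-q^{-Q})$ and using $1-\frac{1}{r} = \frac{1}{r'}$ then produces exactly the asserted constant $\bigl(\frac{1-q^{-Q}}{1-q^{-(\gamma+Q)}}\bigr)^{1/s}\bigl(\frac{1-q^{-Q}}{q^{(\frac{\beta+Q}{r}-\delta)r'}-1}\bigr)^{1/r'}$. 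For the reverse inequality I would test against the truncated geometric sequences $f_m = q^{bm}\,\mathbb{1}_{\{m<N\}}$ with $b = \frac{1}{r}\bigl(r'(\frac{\beta+Q}{r}-\delta)+\beta+Q\bigr)$: these belong to $L^r_\beta$ precisely because $\beta > \delta r - Q$, they turn the Hölder step into an equality at the critical index $n=N$, and a short exponent computation shows that $n\mapsto g_n q^{-(\gamma+Q)n/s}$ increases for $n\le N$ and decreases for $n\ge N$, so the supremum defining the weak norm is attained at $N$ and the ratio $\|H_\delta^* f\|_{L^{s,\infty}_\gamma}/\|f\|_{L^r_\beta}$ already equals the asserted constant for every $N$.

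I do not expect a serious obstacle; the argument is essentially careful bookkeeping, with two points deserving care. The first is the reduction to radial functions for a \emph{weak}-type target: one cannot simply replace $f$ by $\tilde f$ inside $\|H_\delta^* f\|_{L^{s,\infty}_\gamma}$, but only use the one-sided domination $|H_\delta^* f|\le H_\delta^*(\tilde f)$ together with $\|\tilde f\|_{L^r_\beta}\le\|f\|_{L^r_\beta}$. The second is the sharpness direction, where one must verify that the test family is admissible and that it saturates the Hölder inequality and the supremum defining the weak norm \emph{simultaneously}; this is where both structural hypotheses $\beta > \delta r - Q$ and $\frac{\beta+Q}{r}-\delta = \frac{\gamma+Q}{s}$ are genuinely used. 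Finally, passing from a general constant-order Vilenkin group (with $\varkappa = q^Q$ and the $Q$-th-root metric of Definition~\ref{defimetricgradedgroups}) to the normalization in the statement is the same routine rescaling that takes Theorem~\ref{teoweakestHardy} to Corollary~\ref{coroweakestHardyhomogeneousliegroups}.
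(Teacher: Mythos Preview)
Your proposal is correct and follows essentially the same route as the paper, which derives the corollary from Theorem~\ref{teoweakestadjointHardy} by the routine rescaling $\varkappa\to q^{Q}$ (exactly as you note in your last paragraph). The only substantive difference is that the paper does not reduce to radial functions first: it applies H\"older directly to $H_\delta^* f(x)$ for an arbitrary $f$, obtaining the pointwise bound $|H_\delta^* f(x)|\le C_f\,|x|_G^{-(\gamma+Q)/s}$, and then reads off the weak norm from the resulting level-set inclusion; for sharpness it tests with the single function $f_0(x)=|x|_G^{(\delta-Q-\beta)/(r-1)}\mathbb{1}_{G\setminus G_0}$, which is exactly your extremizer $f_m=q^{bm}\mathbb{1}_{\{m<0\}}$ (your exponent $b$ simplifies to $(\beta+Q-\delta)/(r-1)$). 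Your radial reduction and discretization are correct but unnecessary here, since the H\"older step already controls all functions uniformly.
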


Finally, we include in our calculations a sharp estimate for the so called Hardy-Littlewood-P{\'o}lya operator introduced in \cite{article}.

\begin{defi}\label{defiHLPoperator}\normalfont
The Hardy-Littlewood-P{\'o}lya operator on $G$ is defined as $$T f(x) = \int_{G \setminus \{e\}} \frac{f(y)}{\max \{ |x| , |y| \}} dy, \esp \esp x \in G \setminus \{ e \}.$$
\end{defi}
For the Hardy-Littlewood-P{\'o}lya operator we calculate its operator norm when acting on radial functions: 
\begin{teo}\label{TeoHLPradialfunctions}
Let $G$ be a constant-order Vilenkin group, let us say $|G_n / G_{n+1}| = \varkappa$ for all $n \in \Z$. Let $1<r<\infty$ and $- \infty < \alpha < r-1$. Then $$\| T\|_{\mathcal{L}(\Tilde{L}^r_\alpha (G))} = (1-\varkappa^{-1})\Big(\frac{1}{1-\varkappa^{\frac{\alpha}{r} - \frac{1}{r'}}} + \frac{\varkappa^{-\frac{\alpha + 1}{r}}}{1-\varkappa^{-\frac{\alpha+1}{r}}} \Big),$$where $\frac{1}{r} + \frac{1}{r'} =1$.
\end{teo}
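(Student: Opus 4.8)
The plan is to restrict everything to radial functions, where $T$ turns into a convolution operator on $\ell^r(\Z)$, and then to invoke the elementary fact that convolution by a non-negative $\ell^1$-kernel on $\ell^r(\Z)$ has operator norm equal to the $\ell^1$-norm of the kernel. Commutativity of $G$ plays no role: since $|\cdot|_\mathscr{G}$ and Haar measure only see the shells $S_n:=G_n\setminus G_{n+1}$, radiality is well defined and preserved by $T$.

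First I would parametrise $\Tilde{L}^r_\alpha(G)$ by sequences. A radial $f$ is constant on each shell $S_n$; writing $f_n$ for that value and recalling that $|x|=\varkappa^{-n}$ and $|S_n|=\varkappa^{-n}(1-\varkappa^{-1})$ for $x\in S_n$, one gets $\|f\|_{\Tilde{L}^r_\alpha(G)}^r=(1-\varkappa^{-1})\sum_{n\in\Z}|f_n|^r\varkappa^{-n(\alpha+1)}$. Splitting the defining integral of $T$ along the identity $\max\{|x|,|y|\}=\varkappa^{-\min\{m,n\}}$ for $x\in S_m$, $y\in S_n$, a direct computation gives, for $x\in S_m$,
$$ Tf(x)=(1-\varkappa^{-1})\Big[\,\sum_{n\le m}f_n\;+\;\sum_{n>m}f_n\,\varkappa^{\,m-n}\,\Big], $$
so $Tf$ is again radial and $T$ acts on sequences. (One may recognise the two sums as $H^\ast f(x)$ and $Hf(x)$, i.e.\ $T=H+H^\ast$, which already yields $\|T\|\le\|H\|+\|H^\ast\|$ via Theorems \ref{teosharpestimateradialfunct} and \ref{teosharpstrongfractionaladjointHardyradial} with $\delta=0$; this is not needed for what follows but explains the shape of the answer.)

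Next I would remove the weight. The substitution $g_n:=f_n\varkappa^{-n(\alpha+1)/r}$ is, up to the harmless factor $(1-\varkappa^{-1})^{1/r}$, an isometry of $\Tilde{L}^r_\alpha(G)$ onto $\ell^r(\Z)$, and — this is where the constant-order hypothesis is used — under this identification $T$ becomes the convolution operator $g\mapsto k\ast g$ with the non-negative kernel
$$ k(j)=(1-\varkappa^{-1})\,\varkappa^{-j(\alpha+1)/r}\ \ (j\ge 0),\qquad k(j)=(1-\varkappa^{-1})\,\varkappa^{\,j\left(1-(\alpha+1)/r\right)}\ \ (j\le-1). $$
Summing the two geometric series (which converge precisely for $-1<\alpha<r-1$) and using the identity $\frac{1}{1-x}+\frac{y}{1-y}=\frac{1}{1-y}+\frac{x}{1-x}$ with $x=\varkappa^{\alpha/r-1/r'}$ and $y=\varkappa^{-(\alpha+1)/r}$, one obtains
$$ \|k\|_{\ell^1(\Z)}=(1-\varkappa^{-1})\left(\frac{1}{1-\varkappa^{\alpha/r-1/r'}}+\frac{\varkappa^{-(\alpha+1)/r}}{1-\varkappa^{-(\alpha+1)/r}}\right), $$
which is exactly the claimed value.

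It then remains to prove that $\|k\ast\,\cdot\,\|_{\mathcal{L}(\ell^r(\Z))}=\|k\|_{\ell^1(\Z)}$ for non-negative $k\in\ell^1(\Z)$. The bound $\|k\ast g\|_r\le\|k\|_1\|g\|_r$ is Young's inequality. For the reverse bound I would test on $g=\mathbb{1}_{\{-N,\dots,N\}}$, which corresponds to a compactly supported (hence admissible) radial function on $G$: for $|m|\le N-M$ one has $(k\ast g)(m)\ge\sum_{|j|\le M}k(j)$, whence $\|k\ast g\|_r/\|g\|_r\ge\big((2(N-M)+1)/(2N+1)\big)^{1/r}\sum_{|j|\le M}k(j)$; letting $N\to\infty$ and then $M\to\infty$ gives $\|k\|_1$. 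Transporting back through the isometry yields Theorem \ref{TeoHLPradialfunctions}. The one genuinely delicate point is this last lower bound — one must check that the boundary contribution of the test sequence is asymptotically negligible — but it is routine; everything else is bookkeeping with finite geometric series.
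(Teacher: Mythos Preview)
Your proof is correct and takes a genuinely different route from the paper's. The paper argues directly on $G$: for the upper bound it performs the radial change of variable of Proposition~\ref{prointegralradialfunctions} and applies Minkowski's integral inequality to obtain $\|Tf\|_{L^r_\alpha}\le\big(\int_{G\setminus\{e\}}|y|^{-(\alpha+1)/r}/\max\{1,|y|\}\,dy\big)\,\|f\|_{L^r_\alpha}$, then sums the resulting pair of geometric series; for the lower bound it tests on the approximate eigenfunctions $f_n(x)=|x|^{-(\alpha+1)/r-\varepsilon_n}\mathbb{1}_{\{|x|\ge1\}}$ and lets $\varepsilon_n\downarrow0$. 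You instead discretise at the outset, conjugate away the weight, and reduce everything to the single fact that convolution by a non-negative $\ell^1$-kernel on $\ell^r(\Z)$ has operator norm equal to its $\ell^1$-mass, with Young's inequality for the upper bound and long indicator sequences for the lower one. Your approach is more structural---it explains \emph{why} the answer is what it is---while the paper's stays closer to the continuous picture and reuses the machinery already set up for Theorem~\ref{teosharpestimateradialfunct}. Your test functions (indicators of $\{-N,\dots,N\}$) and the paper's (powers $|x|^{-(\alpha+1)/r-\varepsilon}$ on a half-line) are simply the two standard ways of saturating a convolution norm: approximate units versus approximate eigenvectors.

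One incidental point worth recording: you correctly observe that the kernel is summable precisely for $-1<\alpha<r-1$. The lower constraint $\alpha>-1$ is genuinely needed for $T$ to be bounded (it is required already for $H^*$ in Theorem~\ref{teosharpstrongfractionaladjointHardyradial}, and $T\ge H^*$ on non-negative inputs), so the hypothesis $-\infty<\alpha$ in the stated theorem is too generous and your range is the right one.
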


\begin{coro}\label{coroHLPp-adicLiehomogeneousn}
Let $G$ be a graded $\K$-Lie group of dimension $d:=dim(G)$ and homogeneous dimension {\normalfont{$\emph{Q}$}}. Let $1<r<\infty$ and $\alpha < r-1$. Then $$\| T\|_{\mathcal{L}(L^r_\alpha (G))} = (1-q^{-\normalfont{\emph{Q}}})\Big(\frac{1}{1-q^{\frac{\alpha}{r} - \frac{Q}{r'})}} + \frac{q^{-\frac{\alpha + Q}{r}}}{1-q^{-\frac{\alpha+Q}{r}}} \Big),$$where $\frac{1}{r} + \frac{1}{r'} =1$. 
\end{coro}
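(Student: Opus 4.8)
The plan is to deduce Corollary~\ref{coroHLPp-adicLiehomogeneousn} from Theorem~\ref{TeoHLPradialfunctions}, exactly as Corollary~\ref{corostronghardyhomopadicLie} is deduced from Theorem~\ref{teosharpestimateradialfunct}: first reduce the estimate for $T$ on $L^r_\alpha(G)$ to its restriction to radial functions, and then read off the constant from Theorem~\ref{TeoHLPradialfunctions} after passing through the identity $|\cdot|_\mathscr{G}=|\cdot|_G^{Q}$ of Definition~\ref{defimetricgradedgroups}. Recall from Example~\ref{exapadicnilpotentgroups} that $G$, equipped with its dilation-adapted filtration $\{G_n\}$, is a constant-order Vilenkin group with $\varkappa=q^{Q}$; writing $S_n:=G_n\setminus G_{n+1}$, one has $|S_n|=q^{-nQ}(1-q^{-Q})$, while $|\cdot|_G$ and the weight $|\cdot|_G^{\alpha}$ are constant on each $S_n$, and the kernel of $T$, written with the Vilenkin size function $|\cdot|_\mathscr{G}$ of Definition~\ref{metricvilenkingroups}, is $\max\{|x|_\mathscr{G},|y|_\mathscr{G}\}^{-1}=\max\{|x|_G,|y|_G\}^{-Q}$.

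The first step is to observe that $Tf$ is automatically radial and depends on $f$ only through its spherical averages: for $x\in S_k$ the kernel $y\mapsto\max\{|x|_\mathscr{G},|y|_\mathscr{G}\}^{-1}$ depends on $y$ only through the index $n$ with $y\in S_n$, so $Tf(x)=\sum_{n}\max\{q^{-kQ},q^{-nQ}\}^{-1}\int_{S_n}f$. Hence $Tf=T\widetilde f$, where $\widetilde f:=\sum_{n}|S_n|^{-1}\big(\int_{S_n}f\big)\mathbb{1}_{S_n}$ is the radialisation of $f$, well defined for $f\in L^r_\alpha(G)$ because $|S_n|<\infty$; and since the kernel is nonnegative we may assume $f\ge 0$. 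Next I would apply Jensen's inequality on each sphere $S_n$ (on which $|\cdot|_G^{\alpha}$ is constant) to obtain $\|\widetilde f\|_{L^r_\alpha(G)}\le\|f\|_{L^r_\alpha(G)}$. Combining this with $Tf=T\widetilde f$ gives $\|Tf\|_{L^r_\alpha(G)}\le\|T\|_{\mathcal{L}(\Tilde{L}^r_\alpha(G))}\,\|f\|_{L^r_\alpha(G)}$, and since $\|T\|_{\mathcal{L}(\Tilde{L}^r_\alpha(G))}\le\|T\|_{\mathcal{L}(L^r_\alpha(G))}$ is immediate (restriction to a subspace), we conclude $\|T\|_{\mathcal{L}(L^r_\alpha(G))}=\|T\|_{\mathcal{L}(\Tilde{L}^r_\alpha(G))}$.

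It then remains to compute the right-hand side via Theorem~\ref{TeoHLPradialfunctions}. Viewing $G$ as a constant-order Vilenkin group with size function $|\cdot|_\mathscr{G}$ and $\varkappa=q^{Q}$, the weight $|\cdot|_G^{\alpha}$ equals $|\cdot|_\mathscr{G}^{\alpha/Q}$, so in the notation of Theorem~\ref{TeoHLPradialfunctions} the space $\Tilde{L}^r_\alpha(G)$ is the radial weighted space with parameter $\alpha/Q$, and the two operators coincide by the previous step. The hypothesis $\alpha<r-1$ gives (a fortiori, since $Q\ge 1$) $\alpha/Q<r-1$, i.e.\ $q^{\frac{\alpha}{r}-\frac{Q}{r'}}<1$, so Theorem~\ref{TeoHLPradialfunctions} applies with $\varkappa=q^{Q}$ and weight parameter $\alpha/Q$; substituting and simplifying $q^{Q(\frac{\alpha}{Qr}-\frac{1}{r'})}=q^{\frac{\alpha}{r}-\frac{Q}{r'}}$ and $q^{-Q\cdot\frac{\alpha/Q+1}{r}}=q^{-\frac{\alpha+Q}{r}}$ turns its formula into exactly the claimed $(1-q^{-Q})\big(\frac{1}{1-q^{\alpha/r-Q/r'}}+\frac{q^{-(\alpha+Q)/r}}{1-q^{-(\alpha+Q)/r}}\big)$.

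I do not expect a genuine obstacle: all the analytic content sits in Theorem~\ref{TeoHLPradialfunctions}. The two points needing care are the radialisation argument of the middle paragraph --- checking that $f\mapsto\widetilde f$ does not increase the $L^r_\alpha$-norm and that $T\widetilde f=Tf$, which is what upgrades the radial estimate to the full space --- and the bookkeeping of the last paragraph, where every weight exponent must be consistently rescaled by $1/Q$ when passing from the homogeneous quasi-norm $|\cdot|_G$ to the Vilenkin size function $|\cdot|_\mathscr{G}=|\cdot|_G^{Q}$; the homogeneous dimension $Q$ enters the final constant precisely through this rescaling.
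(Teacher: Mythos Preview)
Your proof is correct, but it proceeds differently from the paper's own argument. The paper obtains the lower bound from Theorem~\ref{TeoHLPradialfunctions} as you do, but for the upper bound it does \emph{not} radialise $f$; instead it re-runs the Minkowski argument from the proof of Theorem~\ref{TeoHLPradialfunctions} directly on a general $f$, using the dilation-based change of variable $y\mapsto \lambda(x)y$ (available on graded $\K$-Lie groups) in place of Proposition~\ref{prointegralradialfunctions}. Your route---observe that $Tf=T\widetilde f$ because the kernel depends on $y$ only through $|y|$, then use Jensen on each sphere to get $\|\widetilde f\|_{L^r_\alpha}\le\|f\|_{L^r_\alpha}$, and finally quote Theorem~\ref{TeoHLPradialfunctions}---is exactly the strategy the paper uses for the Hardy operator in Corollary~\ref{corostronghardyhomopadicLie}, just transported to $T$. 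It is arguably cleaner here, since it reuses Theorem~\ref{TeoHLPradialfunctions} wholesale rather than redoing half its proof; the paper's direct approach, on the other hand, makes the role of the dilations explicit and avoids the bookkeeping of the $|\cdot|_\mathscr{G}=|\cdot|_G^{Q}$ rescaling you carry out in your last paragraph.
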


\subsection{Functional inequalities on graded $\K$-Lie groups groups}
Some $\K$-Lie groups can be endowed with an extra structure that reassembles the homogeneous structures on Lie groups. These are the graded $\K$-Lie groups in Example \ref{exapadicnilpotentgroups} where a gradation on their Lie algebra allow us to talk about dilations on the group. The existence of dilations implies some kind of polar decomposition so it is reasonable to think that the arguments given in \cite{doi:10.1098/rspa.2018.0310} for metric measure spaces would also work for graded $\K$-Lie groups. We will show that indeed we can prove similar integral inequalities in this context but we will need to modify slightly the arguments, so that our results are not exactly a particular case of the results in \cite{doi:10.1098/rspa.2018.0310} for metric measure spaces, even though we will be working within an \emph{ultrametric measure space}. Also the constants appearing in our results are quite different since they often come from the calculation of a certain geometric series. Anyway, we can still consider our results as an extension of the well known Hardy inequalities to graded $\K$-Lie groups, where all these results appear to be new and, in addition, we can study the interaction between Hardy inequalities and certain pseudo-differential operators like the Vladimirov-Taibleson operator and the Vladimirov Laplacian on graded $\K$-Lie groups, which is a research topic that also appears to be absent in the literature. The Vladimirov-Taibleson operator provides a suitable analogue of the fractional Laplacian on totally disconnected groups so, keeping in mind the rich literature on Hardy inequalities with differential operators on real Lie groups, it is natural to wonder about the connections between Hardy inequalities and pseudo-differential operators on totally disconnected groups. It is our purpose to give some light about this topic in this section, where we prove the Hardy-Littlewood-Sobolev unequality, the Stein-Weiss inequality, the Gagliardo-Nirenberg inequality and a version of the uncertainty principle on graded $\K$-Lie groups. The techniques we will use to do that are pretty much the same as the ones used to study Hardy inequalities with Rockland operators so, somehow the results in this section are analogues of some already known theorems on graded real Lie groups.

We begin with our work in this direction by proving in Theorem \ref{teofirstintegralinequalitycompact} and Theorem \ref{Teofirstintegralineqnoncompact} a weighted $L^r-L^s$ integral inequality for the Hardy operator on graded $\K$-Lie groups. Using this theorems we will be able to obtain later some interesting results like the Stein-Weiss inequality and the Hardy-Littlewood-Sobolev inequality corresponding to the present framework, and some other functional inequalities. An important remark worth to make now is that we will only deal here with weight functions that are homogeneous. This will be enough for our purposes but of course the question of how to extend  the results to non-homogeneous weights still needs to be adressed.

\begin{teo}\label{teofirstintegralinequalitycompact}
Let $G$ be a compact graded $\K$-Lie group with homogeneous dimension $Q$ and let $\varphi , \psi :G \to [0 , \infty)$ be continuous homogeneous functions of homogeneous degree $- \alpha$ and $\beta$, $ \alpha >Q,  \beta < Q/(r'-1),$ respectively. Let $r,s$ be positive real numbers such that $1< r \leq s < \infty$. Then the condition \begin{equation}\label{eq2}
\frac{\alpha -Q}{s} = \frac{Q}{r'} - \frac{\beta}{r},
\end{equation}is sufficient for the following inequality to hold for any function $f \geq 0$  a.e. on $G$:\begin{equation}\label{eq1}
    \Big( \int_G  \Big( \int_{B(e, |x|_G)} f(z) dz \Big)^s\varphi (x)dx \Big)^{1/s} \leq C \Big(\int_G f(x)^r \psi (x) dx \Big)^{1/r}.
\end{equation}Moreover, for the above inequality to hold the following condition is necessary: \begin{equation}\label{equat3}
    \frac{\alpha -Q}{s} \leq \frac{Q}{r'} - \frac{\beta}{r}.
\end{equation}
\end{teo}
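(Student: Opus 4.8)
The plan is to convert \eqref{eq1} into an equivalent discrete weighted Hardy inequality over the ``shells'' $S_n := G_n \setminus G_{n+1}$, $n\geq 0$, and to dispatch the latter with Minkowski's inequality for sequences. I would first record the elementary facts that make the reduction work. Since $G$ is compact, $G=G_0$ and, with the normalisation $|G_0|=1$, $|G_n|=q^{-nQ}$; the homogeneous quasi-norm $|\cdot|_G$ is constant on each shell, equal to $q^{-n}$ on $S_n$, so $B(e,|x|_G)=G_n$ whenever $x\in S_n$. Writing $x=D_\gamma y$ with $y\in S_0$ and $|\gamma|_\K=q^{-n}$ gives a bijection $S_0\to S_n$ that scales Haar measure by $|\gamma|_\K^Q=q^{-nQ}$, so any continuous $h$ on $G$ homogeneous of degree $\sigma$ satisfies $\int_{S_n}h(x)\,dx = q^{-n(\sigma+Q)}\int_{S_0}h$. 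Applying this to $\varphi$ (degree $-\alpha$), to $\psi$ (degree $\beta$) and to $\psi^{-r'/r}$ (degree $-\beta/(r-1)$) turns every integral over $G$ below into a geometric series in the shell index; one checks directly that \eqref{eq2} amounts to $\frac{\alpha-Q}{s}=\frac{\nu}{r}$, where $\nu:=Q(r-1)-\beta$, and that the hypothesis $\beta<Q/(r'-1)$ says precisely $\nu>0$.

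For the sufficiency I may assume the right side of \eqref{eq1} is finite. Put $u_m:=\int_{S_m}f$, so $\int_{B(e,|x|_G)}f=\sum_{m\geq k}u_m=:U_k$ for $x\in S_k$, and the left side of \eqref{eq1} becomes $\big(c_\varphi\sum_k U_k^{\,s}q^{k(\alpha-Q)}\big)^{1/s}$ with $c_\varphi=\int_{S_0}\varphi$. Hölder on each shell gives $u_m\leq\big(\int_{S_m}f^r\psi\big)^{1/r}\big(\int_{S_m}\psi^{-r'/r}\big)^{1/r'}$, and after the shell computation the problem reduces to showing that $\sum_k U_k^{\,s}q^{k(\alpha-Q)}$ is dominated by a constant times $\big(\sum_m u_m^{\,r}q^{m\nu}\big)^{s/r}$, where $U_k=\sum_{m\geq k}u_m$. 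I would then substitute $v_m:=u_m q^{m\nu/r}$ and use $\alpha-Q=s\nu/r$ to rewrite the left side as $\sum_k\big(\sum_{j\geq0}v_{k+j}\,q^{-j\nu/r}\big)^s$, i.e. the $s$-th power of the $\ell^s$-norm of the convolution of $(v_m)$ with the summable kernel $(q^{-j\nu/r})_{j\geq0}$. Minkowski's inequality in $\ell^s$ (here $s\geq1$ is used) bounds this by $(1-q^{-\nu/r})^{-s}\|v\|_{\ell^s}^s$, and $\|v\|_{\ell^s}\leq\|v\|_{\ell^r}$ because $s\geq r$. Unwinding the substitutions yields \eqref{eq1} with explicit constant $C=c_\varphi^{1/s}(c'_\psi)^{1/r'}(1-q^{-(\alpha-Q)/s})^{-1}$, where $c'_\psi=\int_{S_0}\psi^{-r'/r}$.

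For the necessity I would test \eqref{eq1} with $f=\mathbb{1}_{S_N}$: then $U_k=|S_N|=q^{-NQ}(1-q^{-Q})$ for $k\leq N$ and $U_k=0$ for $k>N$, so the left side of \eqref{eq1} equals $q^{-NQ}(1-q^{-Q})\big(c_\varphi\sum_{k=0}^{N}q^{k(\alpha-Q)}\big)^{1/s}$ and the right side equals $\big(\int_{S_0}\psi\big)^{1/r}q^{-N(\beta+Q)/r}$. Since $\alpha>Q$ the finite geometric sum grows like $q^{N(\alpha-Q)}$, so as $N\to\infty$ the left side behaves like $q^{N(-Q+(\alpha-Q)/s)}$ and the right side like $q^{-N(\beta+Q)/r}$; for \eqref{eq1} to hold for all $N$ the exponents must satisfy $-Q+\frac{\alpha-Q}{s}\leq-\frac{\beta+Q}{r}$, which rearranges exactly to \eqref{equat3}.

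The step I expect to carry the weight is the convolution manipulation: it is only after using \eqref{eq2} to cancel the factor $q^{k(\alpha-Q)}$ that the left side becomes a genuine convolution, and only then do the summability of the kernel ($\nu>0$, i.e. $\beta<Q/(r'-1)$) and the condition $r\leq s$ combine --- via Minkowski in $\ell^s$ and $\ell^r\hookrightarrow\ell^s$ --- to produce a bound by $\|v\|_{\ell^r}$; dropping either hypothesis breaks it. One minor technical point: the constant involves $\int_{S_0}\psi^{-r'/r}$, which is finite as long as $\psi>0$ on the compact shell $S_0$ (automatic for the model weights $\psi(x)=|x|_G^\beta$, where $\psi\equiv1$ on $S_0$), and the general case reduces to such weights via the two-sided bounds $c_1|x|_G^\sigma\leq h(x)\leq c_2|x|_G^\sigma$ satisfied by any strictly positive continuous homogeneous $h$ of degree $\sigma$.
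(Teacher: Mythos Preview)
Your proof is correct and follows the same overall strategy as the paper: decompose over the shells $S_n=G_n\setminus G_{n+1}$, reduce \eqref{eq1} to a discrete weighted Hardy inequality, and dispatch that with H\"older and Minkowski. The paper carries out the discrete step in the classical Muckenhoupt style, inserting an auxiliary weight $g(m)=\big(\int_{G_m}\psi^{1-r'}\big)^{1/rr'}$ into the H\"older split and then swapping the two summations via Minkowski; your substitution $v_m=u_m\,q^{m\nu/r}$ and recognition of $\sum_k U_k^s q^{k(\alpha-Q)}$ as the $\ell^s$-norm of a convolution with the summable kernel $(q^{-j\nu/r})_{j\geq0}$ is a cleaner packaging of exactly the same computation, with the added benefit that the step $\|v\|_{\ell^s}\leq\|v\|_{\ell^r}$ makes the role of the hypothesis $r\leq s$ explicit. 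For the necessity the paper tests with $f_n=\psi^{1-r'}\mathbb{1}_{G_n}$ while you use the simpler $\mathbb{1}_{S_N}$; both lead to the same comparison of exponents. The caveat you flag about finiteness of $\int_{S_0}\psi^{-r'/r}$ applies equally to the paper's constant $C_1$.
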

\begin{teo}\label{Teofirstintegralineqnoncompact}
Let $G$ be a graded $\K$-Lie group with homogeneous dimension $Q$ and let $\varphi , \psi :G \to [0 , \infty)$ be continuous homogeneous functions of homogeneous degree $- \alpha$ and $\beta$, $ \alpha >Q, 0 \leq \beta <Q/(r'-1),$ respectively. Let $r,s$ be positive real numbers such that $1< r \leq s < \infty$. Then the inequality \begin{equation}\label{eq3}
    \Big( \int_G  \Big( \int_{B(e, |x|_G)} f(z) dz \Big)^s\varphi (x)dx \Big)^{1/s} \leq C \Big(\int_G f(x)^r \psi (x) dx \Big)^{1/r},
\end{equation}holds for any function $f \geq 0$  a.e. on $G$ if and only if we have \begin{equation}\label{eq4}
\frac{\alpha -Q}{s} = \frac{Q}{r'} - \frac{\beta}{r}.
\end{equation}
\end{teo}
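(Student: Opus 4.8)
The plan is to exploit the ultrametric (shell) structure of $G$ to rewrite \eqref{eq3} as a weighted two-sided discrete Hardy inequality on $\Z$, and then to treat the two implications separately: sufficiency via Hölder on shells followed by Young's convolution inequality, and necessity via a dilation-scaling argument.

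\emph{Reduction to a series inequality.} For $x\in G_n\setminus G_{n+1}$ one has $|x|_G=q^{-n}$ and $B(e,|x|_G)=G_n$, so the inner average $x\mapsto\int_{B(e,|x|_G)}f(z)\,dz$ is constant on each shell $G_n\setminus G_{n+1}$, equal to $b_n:=\int_{G_n}f=\sum_{m\geq n}a_m$ with $a_m:=\int_{G_m\setminus G_{m+1}}f\geq 0$. Setting also $P_n:=\int_{G_n\setminus G_{n+1}}f^r\psi$ and using the homogeneity of $\varphi$ (degree $-\alpha$) to evaluate $\int_{G_n\setminus G_{n+1}}\varphi=q^{n(\alpha-Q)}A_\varphi$, where $A_\varphi:=\int_{G_0\setminus G_1}\varphi\in(0,\infty)$, inequality \eqref{eq3} becomes exactly $A_\varphi^{1/s}\big(\sum_{n\in\Z}b_n^s\,q^{n(\alpha-Q)}\big)^{1/s}\leq C\big(\sum_{n\in\Z}P_n\big)^{1/r}$.

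\emph{Sufficiency.} Assume \eqref{eq4}; we may assume $\sum_n P_n<\infty$. Hölder on a single shell gives $a_m\leq P_m^{1/r}\big(\int_{G_m\setminus G_{m+1}}\psi^{-r'/r}\big)^{1/r'}=C_\psi^{1/r'}P_m^{1/r}q^{m\mu}$, where $\mu:=\beta/r-Q/r'$ and $C_\psi:=\int_{G_0\setminus G_1}\psi^{-r'/r}<\infty$ since $\psi$ is bounded below on the unit sphere; note that $\beta<Q/(r'-1)=Q(r-1)$ is exactly the statement $\mu<0$. By \eqref{eq4} one has $\alpha-Q=-\mu s$, hence $b_n^s q^{n(\alpha-Q)}=(b_nq^{-n\mu})^s$ and $b_nq^{-n\mu}\leq C_\psi^{1/r'}\sum_{m\geq n}P_m^{1/r}q^{(m-n)\mu}$. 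Since $q^\mu<1$, the right-hand side is, up to reflection, the convolution on $\Z$ of $(P_m^{1/r})_m\in\ell^r$ with the kernel $k$ given by $k(j)=q^{j\mu}$ for $j\geq 0$ and $k(j)=0$ otherwise, which lies in $\ell^p$ for every $p\in[1,\infty]$. Young's convolution inequality with $1/p=1-1/r+1/s$ (an admissible exponent, $1/p\in(0,1]$, precisely because $r\leq s$) then yields \eqref{eq3} with $C=A_\varphi^{1/s}C_\psi^{1/r'}\|k\|_{\ell^p}$.

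\emph{Necessity, and the main obstacle.} Conversely, if \eqref{eq3} holds with constant $C$, I would replace $f$ by $f\circ D_\gamma^{-1}$, $\gamma\in\K^*$: the substitution $x\mapsto\gamma x$ together with the homogeneity of $\varphi$ and $\psi$ multiplies the left-hand side of \eqref{eq3} by $|\gamma|_\K^{Q+(Q-\alpha)/s}$ and the right-hand side by $|\gamma|_\K^{(Q+\beta)/r}$. Because $G$ is non-compact, $|\gamma|_\K$ can be taken over all powers of $q$ and sent to both $0$ and $\infty$ --- in contrast with the compact case of Theorem \ref{teofirstintegralinequalitycompact}, where only $|\gamma|_\K\leq 1$ is admissible, which is why there one only recovers the one-sided condition \eqref{equat3}. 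Testing against $f=\mathbb{1}_{G_0\setminus G_1}$, for which both sides of \eqref{eq3} are finite and positive, this forces $Q+(Q-\alpha)/s=(Q+\beta)/r$, i.e. \eqref{eq4}. I expect the only real subtlety to be the exponent bookkeeping: one must check that the three geometric series occurring above --- from integrating $\varphi$ over the shells, from integrating $\psi^{-r'/r}$, and from the convolution kernel $k$ --- all converge, and it is precisely at these three points that the hypotheses $\alpha>Q$, $\beta<Q/(r'-1)$ and $r\leq s$ enter.
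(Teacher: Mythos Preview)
Your proposal is correct, and it proceeds along genuinely different lines from the paper in both directions.

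For sufficiency, the paper carries out the classical Muckenhoupt--type argument: it introduces an auxiliary weight $g(m)=C_1 q^{-m(Q+\beta(1-r'))/(rr')}$ on the shells, applies H{\"o}lder with this weight to split $\sum_{k\geq m}\int_{G_k\setminus G_{k+1}}f$ into the product $(\sum_{k\geq m}U(k))^{1/r}V(m)^{1/r'}$, and then uses Minkowski's inequality $(\ell^{s/r}\text{--}\ell^1)$ to interchange the sums in $m$ and $k$; the final step is the explicit evaluation of the geometric series $\sum_{m\leq k}W(m)V(m)^{s/r'}$. Your route is shorter: after the same shell reduction you apply H{\"o}lder on each shell \emph{without} an auxiliary weight, which produces the convolution $\sum_{m\geq n}P_m^{1/r}q^{(m-n)\mu}$ with the geometric kernel $k(j)=q^{j\mu}\mathbb{1}_{j\geq 0}$, and a single application of Young's inequality on $\Z$ with exponent $1/p=1-1/r+1/s\in(0,1]$ finishes the job. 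The paper's argument tracks the constants $C_1,\dots,C_4$ explicitly (useful if one later wants quantitative bounds), whereas yours makes transparent exactly why the three hypotheses $\alpha>Q$, $\beta<Q/(r'-1)$ and $r\leq s$ are needed: they are precisely the convergence of $\int_{|x|_G\geq 1}\varphi$, the summability of $k$, and the admissibility of the Young exponent $p$.

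For necessity, the paper tests against the family $f_n=\psi^{1-r'}\mathbb{1}_{G_n}$ and evaluates both sides directly, obtaining an inequality that can hold uniformly in $n\in\Z$ only when \eqref{eq4} holds. Your dilation argument with a single test function $f=\mathbb{1}_{G_0\setminus G_1}$ is equivalent but more structural: it makes explicit that the non-compact case, unlike Theorem~\ref{teofirstintegralinequalitycompact}, allows $|\gamma|_\K$ to tend to both $0$ and $\infty$, which is exactly why the one-sided condition \eqref{equat3} sharpens to the equality \eqref{eq4}. One small point worth flagging: your bound $C_\psi=\int_{G_0\setminus G_1}\psi^{-r'/r}<\infty$ uses that $\psi$ is bounded away from zero on the unit sphere; the paper makes the same tacit assumption in defining its constant $C_1$, so this is not a discrepancy, but strictly speaking it is an implicit hypothesis on $\psi$ not stated in the theorem.
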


And a similar inequality holds for the adjoint operator.

\begin{teo}\label{Teosecondintegralineqnoncompact}
Let $G$ be a graded graded $\K$-Lie group with homogeneous dimension $Q$ and let $\varphi , \psi :G \to [0 , \infty)$ be continuous homogeneous functions of homogeneous degree $- \alpha$ and $\beta$, $ \alpha <Q, \beta > Q/(r' -1) ,$ respectively. Let $r,s$ be positive real numbers such that $1< r \leq s < \infty$. Then the inequality \begin{equation}\label{eq5}
    \Big( \int_G  \Big( \int_{G \setminus B(e, |x|_G)} f(z) dz \Big)^s\varphi (x)dx \Big)^{1/s} \leq C \Big(\int_G f(x)^r \psi (x) dx \Big)^{1/r},
\end{equation}holds for any function $f \geq 0$  a.e. on $G$ if and only if we have \begin{equation}\label{eq6}
\frac{\alpha -Q}{s} = \frac{Q}{r'} - \frac{\beta}{r}.
\end{equation}
\end{teo}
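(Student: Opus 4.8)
The plan is to follow the scheme of Theorem~\ref{Teofirstintegralineqnoncompact}, but with the roles of small and large balls interchanged, and to reduce \eqref{eq5} to a weighted convolution inequality on $\Z$ that is dispatched by Young's inequality. Write $A_n:=G_n\setminus G_{n+1}$, so that $G=\{e\}\sqcup\bigsqcup_{n\in\Z}A_n$, while $|x|_G=q^{-n}$ and $|A_n|=q^{-nQ}(1-q^{-Q})$ for $x\in A_n$. Since the dilation $D_\gamma$ with $|\gamma|_\K=q^{-n}$ maps $A_0$ bijectively onto $A_n$ with Jacobian $q^{-nQ}$, homogeneity of $\varphi$ and $\psi$ gives $\int_{A_n}\varphi=q^{n(\alpha-Q)}c_\varphi$, $\int_{A_n}\psi=q^{-n(\beta+Q)}c_\psi$ and $\int_{A_n}\psi^{-r'/r}=q^{n(\beta/(r-1)-Q)}\widetilde c_\psi$, where $c_\varphi:=\int_{A_0}\varphi$, $c_\psi:=\int_{A_0}\psi$, $\widetilde c_\psi:=\int_{A_0}\psi^{-r'/r}$; here $\widetilde c_\psi<\infty$ because $\psi$ is continuous and, as I will assume throughout (this is automatic for the model weight $\psi(x)=|x|_G^{\beta}$), does not vanish off the identity. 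I may assume $c_\varphi>0$, since otherwise \eqref{eq5} is trivial. For $x\in A_n$ one has $G\setminus B(e,|x|_G)=G\setminus G_n=\bigsqcup_{m\le n-1}A_m$, so, abbreviating $a_m:=\int_{A_m}f$,
\[
\Big(\text{LHS of \eqref{eq5}}\Big)^{s}=\sum_{n\in\Z}\Big(\sum_{m\le n-1}a_m\Big)^{s}\int_{A_n}\varphi=c_\varphi\sum_{n\in\Z}q^{n(\alpha-Q)}\Big(\sum_{m\le n-1}a_m\Big)^{s}.
\]

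\textbf{Sufficiency.} Assuming \eqref{eq6}, I would first pass to the right-hand side: Hölder's inequality on each annulus, $a_m\le(\int_{A_m}f^r\psi)^{1/r}(\int_{A_m}\psi^{-r'/r})^{1/r'}$, combined with the identity above and $r/r'=r-1$, yields
\[
\int_G f^r\psi=\sum_{m\in\Z}\int_{A_m}f^r\psi\;\ge\;\widetilde c_\psi^{-r/r'}\sum_{m\in\Z}q^{-m\eta}a_m^{\,r},\qquad \eta:=\beta-Q(r-1)=\beta-\tfrac{Q}{r'-1}>0.
\]
Now set $b_m:=q^{-m\eta/r}a_m$, so $\sum_m b_m^{\,r}=\sum_m q^{-m\eta}a_m^{\,r}$, and observe that \eqref{eq6} is exactly the statement $(\alpha-Q)/s=Q/r'-\beta/r=-\eta/r$. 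Hence
\[
c_n:=q^{n(\alpha-Q)/s}\sum_{m\le n-1}a_m=\sum_{m\le n-1}q^{-(n-m)\eta/r}b_m=(\phi*b)(n),\qquad \phi(j):=q^{-j\eta/r}\,\mathbb{1}_{\{j\ge1\}},
\]
is a genuine convolution on $\Z$, and $\phi\in\ell^{a}(\Z)$ with $\|\phi\|_{\ell^a}^{a}=q^{-a\eta/r}/(1-q^{-a\eta/r})$, for the exponent $a$ given by $1+\tfrac1s=\tfrac1a+\tfrac1r$, which satisfies $1\le a<\infty$ precisely because $1<r\le s<\infty$. Young's inequality then gives
\[
\text{LHS of \eqref{eq5}}=c_\varphi^{1/s}\|c\|_{\ell^s}=c_\varphi^{1/s}\|\phi*b\|_{\ell^s}\le c_\varphi^{1/s}\|\phi\|_{\ell^a}\|b\|_{\ell^r}\le c_\varphi^{1/s}\|\phi\|_{\ell^a}\widetilde c_\psi^{1/r'}\Big(\int_G f^r\psi\Big)^{1/r},
\]
which is \eqref{eq5} with $C=c_\varphi^{1/s}\|\phi\|_{\ell^a}\widetilde c_\psi^{1/r'}$.

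\textbf{Necessity.} Here I would test \eqref{eq5} (assumed to hold with constant $C$) against $f=\mathbb{1}_{A_k}$, $k\in\Z$. Then $\int_G f^r\psi=q^{-k(\beta+Q)}c_\psi$, while $\int_{G\setminus B(e,|x|_G)}\mathbb{1}_{A_k}=|A_k|\,\mathbb{1}_{G_{k+1}}(x)$ almost everywhere, so, summing the geometric series (convergent thanks to $\alpha<Q$),
\[
\Big(\text{LHS of \eqref{eq5}}\Big)^{s}=|A_k|^{s}\int_{G_{k+1}}\varphi=(1-q^{-Q})^{s}c_\varphi\,\frac{q^{\alpha-Q}}{1-q^{\alpha-Q}}\;q^{\,k[(\alpha-Q)-Qs]}.
\]
Thus \eqref{eq5} reads $q^{\,k[(\alpha-Q)/s-Q]}\le C'\,q^{-k(\beta+Q)/r}$ for every $k\in\Z$ (with $C'$ depending only on the data); letting $k\to+\infty$ and then $k\to-\infty$ forces $(\alpha-Q)/s-Q=-(\beta+Q)/r$, i.e.\ \eqref{eq6} after using $Q-Q/r=Q/r'$. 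This step uses that $G$ is not compact, so $k$ ranges over all of $\Z$; in the compact case only $k\ge0$ is available and one recovers merely the inequality in \eqref{eq6}, as in Theorem~\ref{teofirstintegralinequalitycompact}.

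\textbf{Where the difficulty sits.} The delicate point is the sufficiency at the \emph{critical} exponent \eqref{eq6}: a careless estimate that replaces $\sum_{m\le n-1}a_m$ by the full sum $\sum_m a_m$ produces a geometric series of ratio exactly $1$, which diverges. What rescues the argument is precisely that \eqref{eq6} makes the discretised operator a translation-invariant convolution on $\Z$ with a summable kernel (summability uses $\beta>Q/(r'-1)$, i.e.\ $\eta>0$), for which Young's inequality is available in the whole range $1<r\le s$; the hypothesis $\alpha<Q$ only enters through convergence of the geometric series in the necessity part (equivalently, local integrability of $\varphi$). One must also keep $\psi$ non-degenerate so that $\widetilde c_\psi<\infty$. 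Alternatively, \eqref{eq5} could be deduced from Theorem~\ref{Teofirstintegralineqnoncompact} by duality, since $\int_{G\setminus B(e,|x|_G)}$ and $\int_{B(e,|x|_G)}$ are formal transposes and the weight ranges $\alpha<Q,\ \beta>Q/(r'-1)$ here are dual to $\alpha>Q,\ \beta<Q/(r'-1)$ there; but the direct computation above is the shorter route.
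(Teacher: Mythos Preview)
Your argument is correct. The sufficiency part via discrete convolution and Young's inequality, as well as the necessity part via testing on annuli $\mathbb{1}_{A_k}$, both go through; your identification $\eta=\beta-Q(r-1)=\beta-Q/(r'-1)>0$ is exactly what makes the kernel $\phi\in\ell^a(\Z)$, and the balance condition \eqref{eq6} is precisely what turns the weighted sum into a genuine convolution.

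Your route is, however, genuinely different from the paper's. The paper does not write out a separate proof of Theorem~\ref{Teosecondintegralineqnoncompact} but declares it follows by ``similar arguments'' to Theorem~\ref{Teofirstintegralineqnoncompact}. That proof proceeds in the Muckenhoupt--Tomaselli style: one introduces an auxiliary weight $g(m)=\big(\int_{G_m}\psi^{1-r'}\big)^{1/rr'}$, splits the inner integral by H\"older as $\int f=\int (f\psi^{1/r}g)\cdot(\psi^{1/r}g)^{-1}$, packages the pieces into sequences $U(m),V(m),W(m)$, and then swaps the order of summation via Minkowski's inequality; the condition \eqref{eq6} is what makes the resulting exponent vanish. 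For necessity the paper tests against $f_n=\psi^{1-r'}\mathbb{1}_{G_n}$ rather than your $\mathbb{1}_{A_k}$. Your approach trades the somewhat opaque choice of $g(m)$ and the Minkowski step for the single observation that, under \eqref{eq6}, the discretised operator is translation-invariant on $\Z$; this is shorter and makes the role of the hypothesis $\beta>Q/(r'-1)$ (summability of $\phi$) more transparent. The paper's method, on the other hand, is closer to the classical two-weight Hardy literature and would adapt more readily to non-homogeneous weights, where the convolution structure is lost. Your closing remark that one could alternatively obtain Theorem~\ref{Teosecondintegralineqnoncompact} from Theorem~\ref{Teofirstintegralineqnoncompact} by duality is also a valid third route that the paper does not mention.
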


As an application of the previous weighted integral inequalities we obtain the following integral version of the Hardy inequality:
\begin{teo}\label{Teofunctionalineq}
Let $G$ be a graded $\K$-Lie group with homogeneous dimension $Q$. Let $1<r \leq s < \infty$ and $a \in \C$, $0 < \mathfrak{Re}(a) <Q/r$. Let $0 \leq b <Q$ and $\frac{\mathfrak{Re}(a)}{Q} = \frac{1}{r} - \frac{1}{s} + \frac{b}{sQ}.$ Assume that $K:G \to \C$ is such that $|K_a(x)| \leq C_0 |x|_G^{ \mathfrak{Re}( a) - Q}$, for some positive constant $C_0$. Then there exists a positive constant $C_1=C_1(r,s,a, b)$ such that  $$\Big\| \frac{f * K_a}{|x|_G^{b/s}}  \Big\|_{L^s (G)} \leq C_1 \| f \|_{L^r (G)}.$$
\end{teo}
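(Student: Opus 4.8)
The plan is to reduce the estimate on the convolution $f * K_a$ to the weighted integral inequality for the Hardy operator proved in Theorem \ref{Teofirstintegralineqnoncompact}, together with its adjoint counterpart Theorem \ref{Teosecondintegralineqnoncompact}. The starting point is the pointwise bound $|K_a(x)| \leq C_0 |x|_G^{\mathfrak{Re}(a) - Q}$, which gives
\[
|(f*K_a)(x)| \leq C_0 \int_G |f(y)|\, |x y^{-1}|_G^{\mathfrak{Re}(a) - Q}\, dy.
\]
Since $|\cdot|_G$ is an ultrametric homogeneous quasi-norm, for $x y^{-1} \in G_n \setminus G_{n+1}$ one has $|xy^{-1}|_G = \max\{|x|_G, |y|_G\}$ whenever $|x|_G \neq |y|_G$, so splitting the domain of integration into the regions $\{|y|_G \leq |x|_G\}$ and $\{|y|_G > |x|_G\}$ produces
\[
|(f*K_a)(x)| \leq C_0 |x|_G^{\mathfrak{Re}(a) - Q}\int_{B(e,|x|_G)} |f(y)|\, dy \;+\; C_0 \int_{G \setminus B(e,|x|_G)} |f(y)|\, |y|_G^{\mathfrak{Re}(a) - Q}\, dy,
\]
so that the convolution is controlled by a fractional Hardy operator applied to $|f|$ plus an adjoint fractional Hardy operator applied to $|y|_G^{\mathfrak{Re}(a)-Q}|f|$. (One must be slightly careful on the ultrametric sphere $|y|_G = |x|_G$, but that set contributes a term of the same shape which can be absorbed into either piece.)

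Next I would identify the parameters so that each of the two terms falls under the hypotheses of the weighted inequalities. Dividing through by $|x|_G^{b/s}$, the first term becomes $|x|_G^{\mathfrak{Re}(a) - Q - b/s}\int_{B(e,|x|_G)} |f|$, which is exactly the left-hand side of \eqref{eq3} with $\varphi(x) = |x|_G^{s(\mathfrak{Re}(a) - Q) - b}$ (homogeneous of degree $\alpha' := sQ - s\,\mathfrak{Re}(a) + b$, wait — of degree $-\alpha$ with $\alpha = Q(s) \ldots$) and $\psi \equiv 1$ (homogeneous of degree $\beta = 0$); the balance condition \eqref{eq4}, namely $\frac{\alpha - Q}{s} = \frac{Q}{r'} - \frac{\beta}{r}$, translates precisely into the assumed relation $\frac{\mathfrak{Re}(a)}{Q} = \frac1r - \frac1s + \frac{b}{sQ}$, and the constraints $0 < \mathfrak{Re}(a) < Q/r$, $0 \leq b < Q$ guarantee $\alpha > Q$ and $\beta = 0 < Q/(r'-1)$. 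Theorem \ref{Teofirstintegralineqnoncompact} then bounds the first term by $C\|f\|_{L^r(G)}$. For the second term I would likewise match it to \eqref{eq5} of Theorem \ref{Teosecondintegralineqnoncompact}, writing the inner integrand's weight $|y|_G^{\mathfrak{Re}(a)-Q}$ together with the outer weight $|x|_G^{-b/s}$ and the substitution of $f$ by $|f|$ absorbed into $\psi$; the same arithmetic identity yields the matching balance condition and hence the bound $C\|f\|_{L^r(G)}$ for the second term as well. Adding the two estimates and taking $C_1$ proportional to $C_0$ times the sum of the two constants completes the argument.

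The main obstacle I anticipate is the bookkeeping in the second, adjoint, term: one needs the weight exponents to land in the admissible ranges of Theorem \ref{Teosecondintegralineqnoncompact} (which requires $\alpha < Q$ and $\beta > Q/(r'-1)$, the opposite inequalities to the first theorem), and verifying that the hypotheses $0 < \mathfrak{Re}(a) < Q/r$ and $0 \le b < Q$ indeed force the exponents into those ranges — while simultaneously producing the \emph{same} balance identity \eqref{eq6} — is the delicate point. A secondary technical nuisance is the treatment of the ultrametric sphere $\{|y|_G = |x|_G\}$: because $|\cdot|_G$ takes only discrete values, this set has positive measure and must be explicitly allocated to one of the two Hardy-type pieces (most cleanly to the first one, since there $|xy^{-1}|_G = |x|_G$ on that sphere), but this is routine once noticed. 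Everything else is a direct application of the integral inequalities already established.
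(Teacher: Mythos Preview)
Your reduction to the weighted Hardy inequality (Theorem \ref{Teofirstintegralineqnoncompact}) for the region $|y|_G < |x|_G$ and to the adjoint inequality (Theorem \ref{Teosecondintegralineqnoncompact}) for the region $|y|_G > |x|_G$ is exactly what the paper does, and your parameter bookkeeping for those two pieces is on the right track. The genuine gap is in your treatment of the ultrametric sphere $\{|y|_G = |x|_G\}$.

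You write that on that sphere ``$|xy^{-1}|_G = |x|_G$'', and propose to absorb the sphere contribution into the first Hardy piece. This is false: the ultrametric equality $|xy^{-1}|_G = \max\{|x|_G,|y|_G\}$ is guaranteed only when $|x|_G \neq |y|_G$. When $|x|_G = |y|_G$ one has merely $|xy^{-1}|_G \leq |x|_G$, and in fact $xy^{-1}$ ranges over the entire ball $B(e,|x|_G)$, so $|xy^{-1}|_G$ can be arbitrarily small. Since the exponent $\mathfrak{Re}(a)-Q$ is negative, the kernel $|xy^{-1}|_G^{\mathfrak{Re}(a)-Q}$ blows up near $y=x$ and cannot be bounded by $|x|_G^{\mathfrak{Re}(a)-Q}$. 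So the sphere term is not ``of the same shape'' and cannot be absorbed into either Hardy piece.

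The paper handles this third region separately: for each fixed shell $|x|_G = q^{-k}$ the inner integral is rewritten as a convolution of $|\cdot|_G^{\mathfrak{Re}(a)-Q}\mathbb{1}_{G_k}$ against $f\,\mathbb{1}_{G_k\setminus G_{k+1}}$, and Young's inequality with exponent $t$ satisfying $1+\tfrac{1}{t}=\tfrac{1}{s}+\tfrac{1}{r}$ gives a bound whose $k$-dependence exactly cancels the outer weight $q^{kb}$. Summing over $k$ then yields the $L^r$ norm of $f$. You will need to add this (or an equivalent) argument for the diagonal shell to close the proof.
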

This particular integral inequality is special because left invariant operators on a graded $\K$-Lie group can be expressed as a right convolution operators. When the operator is homogeneous and satisfy some suitable conditions, the convolution kernel is bounded by $|x|_G^{a-Q}$ so that Theorem \ref{Teofunctionalineq} applies. For instance, we have the following examples: 
\begin{itemize}
    \item The Vladimirov-Taibleson operator on $G$ which is given by the convolution with the distribution that we call the Riesz kernel: 
\begin{align*}
    \langle \mathfrak{r}_s^G , f \rangle &:= \frac{1 - q^{-Q}}{1-q^{s-Q}} f(e) + \frac{1-q^{-s}}{1-q^{s-Q}} \int_{|x|_G >1} |x|_G^{s-Q} f(x) dx \\ &+ \frac{1-q^{-s}}{1-q^{s-Q}} \int_{|x|_G \leq 1} |x|_G^{s-Q} (f(x) - f(e))dx.
\end{align*} In particular for $\mathfrak{Re}(s)>0$ we can write: $$\langle \mathfrak{r}_s^G , f \rangle = \frac{1 - q^{-s}}{1 - q^{s-Q}} \int_G |x|_G^{s-Q} f(x)dx, \esp \esp \esp s \notin Q + \frac{2 \pi i}{\ln q} \Z,$$ $$\langle \mathfrak{r}_{-s}^G , f \rangle = \frac{1 - q^{s}}{1 - q^{-s-Q}} \int_G  |x|_G^{-s-Q} (f(x)- f(e))dx.$$The Vladimirov-Taibleson operator of order $a$, which we denote by $\mathscr{D}^a$ is given by the expression $$\mathscr{D}^a f(x) := \frac{1 - q^a}{1 - q^{- (a + Q)}} \int_G \frac{f(xy^{-1}) - f(x)}{|y|_G^{ a + Q}} dy,\esp \esp \esp \mathfrak{Re}(a)>0, \esp \esp x \in G.$$We can check that $\mathscr{D}^a$ is an $a$-homogeneous operator since \begin{align*}
    \mathscr{D}^a (f \circ D_\lambda) (x) &= \frac{1 - q^a}{1 - q^{- (a + Q)}} \int_G \frac{f(\lambda(xy)) - f(\lambda x)}{|y|_G^{ a + Q}} dy \\ &= \frac{1 - q^a}{1 - q^{- (a + Q)}} |\lambda|_\K^{a + Q} \int_G \frac{f(\lambda(xy)) - f(\lambda x)}{|\lambda y|_G^{ a + Q}} dy \\ &=\frac{1 - q^a}{1 - q^{- (a + Q)}} |\lambda|_\K^{a } \int_G \frac{f((\lambda x)y) - f( \lambda x)}{| y|_G^{ \alpha + Q}} dy   \\&= |\lambda|_\K^\alpha (\mathscr{D}^\alpha f) \circ D_\lambda (x). 
\end{align*}

The inverse operator $\mathscr{D}^{-a}$ is $$\mathscr{D}^{-a} f (x) = (f * | \cdot |_G^{a-Q} )(x) ,$$see \cite{FundSolVladimirovOp}, so that taking $\mathfrak{Re}(a) < Q$ and applying Theorem \ref{Teofunctionalineq} we get the following corollary: 
\begin{coro}
Let $G$ be a graded $\K$-Lie group with homogeneous dimension $Q$ and let $a \in \C$. Let $1<r \leq s < \infty$ and $0 < \mathfrak{Re}(a) <Q$. Let $0 \leq b <Q$ and $\frac{\mathfrak{ Re}(a)}{Q} = \frac{1}{r} - \frac{1}{s} + \frac{b}{sQ}.$ Then there exists a positive constant $C=C(r,s,a, b)$ such that  $$\Big\| \frac{f}{|x|^{b/s}_G}  \Big\|_{L^s (G)} \leq C \| \mathscr{D}^a f \|_{L^r (G)},$$for all $f \in \mathcal{D}(G).$
\end{coro}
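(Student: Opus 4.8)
The plan is to derive this corollary as an immediate consequence of Theorem~\ref{Teofunctionalineq}, taking the kernel there to be the Riesz kernel $K_a(x) := |x|_G^{a-Q}$ and exploiting the fact that convolution (on the right) with this kernel inverts the Vladimirov--Taibleson operator $\mathscr{D}^a$.

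First I would set up the inversion. As recalled just before the statement, $\mathscr{D}^{-a} g = g * |\cdot|_G^{a-Q}$, and since $\mathscr{D}^{a}$ and $\mathscr{D}^{-a}$ are homogeneous pseudo-differential operators of opposite orders one has $\mathscr{D}^{-a}\mathscr{D}^{a} f = f$ for every $f \in \mathcal{D}(G)$; this is the point at which I would invoke \cite{FundSolVladimirovOp}. Setting $g := \mathscr{D}^{a} f$, it follows that $f = g * K_a$, and clearly $|K_a(x)| = |x|_G^{\mathfrak{Re}(a)-Q}$, so the hypothesis $|K_a(x)| \le C_0 |x|_G^{\mathfrak{Re}(a)-Q}$ of Theorem~\ref{Teofunctionalineq} holds with $C_0 = 1$.

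Second, I would verify that the parameters lie in the admissible range. The identity $\frac{\mathfrak{Re}(a)}{Q} = \frac{1}{r} - \frac{1}{s} + \frac{b}{sQ}$ is exactly the scaling relation required in Theorem~\ref{Teofunctionalineq}; moreover $0 \le b < Q$ together with $1 < r \le s$ forces $\frac{\mathfrak{Re}(a)}{Q} < \frac{1}{r} - \frac{1}{s} + \frac{1}{s} = \frac{1}{r}$, so automatically $0 < \mathfrak{Re}(a) < Q/r$, which is precisely the range assumed in that theorem (the weaker bound $\mathfrak{Re}(a) < Q$ stated in the corollary is only what is needed to make $|\cdot|_G^{a-Q}$ locally integrable). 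Applying Theorem~\ref{Teofunctionalineq} to $g$ then yields $\big\| (g * K_a)/|x|_G^{b/s} \big\|_{L^s(G)} \le C_1 \|g\|_{L^r(G)}$, i.e. $\big\| f/|x|_G^{b/s} \big\|_{L^s(G)} \le C_1 \|\mathscr{D}^{a} f\|_{L^r(G)}$, which is the assertion with $C = C_1(r,s,a,b)$.

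The only genuinely delicate point is the functional-analytic justification of the inversion identity $\mathscr{D}^{-a}\mathscr{D}^{a} f = f$ on $\mathcal{D}(G)$: one must know that $g = \mathscr{D}^{a} f$ is sufficiently regular near $e$ and decays fast enough at infinity for the convolution $g * |\cdot|_G^{a-Q}$ to converge absolutely and to reproduce $f$. Since $-Q < \mathfrak{Re}(a) - Q < 0$ makes $|\cdot|_G^{a-Q}$ locally integrable, and the mapping properties of the Vladimirov--Taibleson operator supply the needed decay, this is handled by the cited results in \cite{FundSolVladimirovOp}; it is this bookkeeping, rather than the inequality, that carries whatever technical weight the argument has.
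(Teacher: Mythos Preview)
Your proposal is correct and follows exactly the route the paper takes: the paper derives the corollary in one line by noting that $\mathscr{D}^{-a} f = f * |\cdot|_G^{a-Q}$ (citing \cite{FundSolVladimirovOp}) and invoking Theorem~\ref{Teofunctionalineq}. Your write-up is in fact more careful than the paper's, since you explicitly verify that the scaling relation together with $b<Q$ forces $\mathfrak{Re}(a)<Q/r$, which is the hypothesis actually needed in Theorem~\ref{Teofunctionalineq}.
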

         
\item Let $h $ be an elliptic homogeneous polynomial of homogeneous degree $\nu$ with coefficients in the ring of integers $\mathcal{O}_\K$. The Igusa's local zeta function associated to $h$ has a meromorphic continuation to the whole complex plane given by $$\langle |h|_\K^s , f \rangle = \int_{G_0} (f(x) - f(e)) |h(x)|_\K^s dx + \frac{L(q^{-s})}{1 - q^{-Q - \nu s}}f(e) + \int_{G \setminus G_0} f(x) |h(x)|_\K^s dx.$$In particular for $\mathfrak{Re}(s)>0$: $$ \langle |h|_\K^{s-Q/\nu} , f \rangle = \int_{G} f(x) |h(x)|_\K^{s-Q/\nu} dx, \esp \esp \esp  s \neq Q/\nu + \frac{2 \pi i}{\ln{q}} \Z, $$ $$ \langle |h|_\K^{-s-Q/\nu} , f \rangle = \int_{G} (f(x) - f(e)) |h(x)|_\K^{-s-Q/\nu} dx .$$The above distribution defines a convolution operator acting on $\mathcal{D}(G)$ by the formula $$T_{h,a} f (x)  := (f * |h|^{-a - Q/\nu}_\K)(x)  .$$See \cite{articlefundsolutionszuniga} for the relation between local zeta functions and fundamental solutions of pseudo-differential operators. As an application of Theorem \ref{Teofunctionalineq}, we obtain the following corollary:
\begin{coro}
Let $G$ be a graded $\K$-Lie group with homogeneous dimension $Q$ and let $a \in \C$. Let $h$ be an elliptic homogeneous polynomial of homogeneous degree $\nu>0$. Let $1<r \leq s < \infty$ and $0 < \mathfrak{Re}(a) <Q/\nu$. Let $0 \leq b <Q$ and $\frac{\mathfrak{ Re}(a) \nu}{Q} = \frac{1}{r} - \frac{1}{s} + \frac{b}{sQ}.$ Then there exists a positive constant $C=C(r,q,a, b)$ such that  $$\Big\| \frac{T_{h, - \alpha}f}{|x|^{b/s}_G}  \Big\|_{L^s (G)} \leq C \|  f \|_{L^r (G)},$$for all $f \in \mathcal{D}(G).$
\end{coro}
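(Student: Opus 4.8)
The plan is to recognise $T_{h,-a}$ as a convolution operator whose kernel is pointwise comparable to a power of the homogeneous quasi-norm $|\cdot|_G$, and then to invoke Theorem \ref{Teofunctionalineq}. By the formula preceding the statement, for $f \in \mathcal{D}(G)$ one has $T_{h,-a}f = f * K$ with $K(x) := |h(x)|_\K^{\,a - Q/\nu}$ (I write $a$ for the complex parameter, denoted $\alpha$ in the statement of the operator). Hence it suffices to produce a bound $|K(x)| \leq C_0\,|x|_G^{\,\mathfrak{Re}(\nu a) - Q}$ and to check that the pair $(\nu a, b)$ fulfils the hypotheses of Theorem \ref{Teofunctionalineq}.

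The key step is the two-sided comparison
\[
c_1\,|x|_G^{\,\nu} \;\leq\; |h(x)|_\K \;\leq\; c_2\,|x|_G^{\,\nu}, \qquad x \in G ,
\]
for suitable constants $0 < c_1 \leq c_2 < \infty$. I would argue as follows. Since $h$ is homogeneous of degree $\nu$ for the dilations, $|h(D_\lambda x)|_\K = |\lambda|_\K^{\,\nu}\,|h(x)|_\K$; and $|D_\lambda x|_G = |\lambda|_\K\,|x|_G$ by Definition \ref{defimetricgradedgroups}. Every $x \neq e$ factors as $x = D_\lambda y$ with $y$ in the ``unit sphere'' $S := G_0 \setminus G_1 = \{\, y \in G : |y|_G = 1 \,\}$ and $|\lambda|_\K = |x|_G$, so it is enough to bound $|h|_\K$ above and below on $S$. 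Now $S$ is nonempty (the chain $\{G_n\}$ is strictly decreasing) and compact (closed inside the compact group $G_0$), the map $y \mapsto |h(y)|_\K$ is continuous, and ellipticity of $h$ means $h(y) \neq 0$ for $y \neq e$; hence $|h|_\K$ attains on $S$ a strictly positive minimum $c_1$ and a finite maximum $c_2$, and rescaling gives the displayed estimates. Because $\mathfrak{Re}(a) - Q/\nu < 0$, it follows that
\[
|K(x)| \;=\; |h(x)|_\K^{\,\mathfrak{Re}(a) - Q/\nu} \;\leq\; c_1^{\,\mathfrak{Re}(a) - Q/\nu}\,|x|_G^{\,\nu\mathfrak{Re}(a) - Q} \;=\; C_0\,|x|_G^{\,\mathfrak{Re}(\nu a) - Q} ,
\]
which is exactly the kernel bound required in Theorem \ref{Teofunctionalineq} for the complex parameter $\nu a$.

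It remains to verify the other hypotheses for $\nu a$. The relation $\tfrac{\mathfrak{Re}(a)\nu}{Q} = \tfrac1r - \tfrac1s + \tfrac{b}{sQ}$ of the corollary is precisely $\tfrac{\mathfrak{Re}(\nu a)}{Q} = \tfrac1r - \tfrac1s + \tfrac{b}{sQ}$, the scaling identity demanded by the theorem, and it also yields $\mathfrak{Re}(\nu a) = \tfrac{Q}{r} - \tfrac{Q-b}{s}$; since $0 \leq b < Q$ and $1 < s < \infty$ this forces $\mathfrak{Re}(\nu a) < Q/r$, while $\mathfrak{Re}(\nu a) = \nu\,\mathfrak{Re}(a) > 0$. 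Applying Theorem \ref{Teofunctionalineq} with kernel $K$ and parameter $\nu a$ then gives
\[
\Big\| \frac{T_{h,-a}f}{|x|_G^{\,b/s}} \Big\|_{L^s(G)} \;=\; \Big\| \frac{f * K}{|x|_G^{\,b/s}} \Big\|_{L^s(G)} \;\leq\; C\,\|f\|_{L^r(G)}
\]
for all $f \in \mathcal{D}(G)$, with $C$ depending only on $r,s,a,b,\nu$ and $c_1$. I anticipate no real difficulty: the only substantive point is the comparison $|h(x)|_\K \asymp |x|_G^{\,\nu}$, resting on the compactness of $S$ together with the continuity and non-vanishing of $|h|_\K$ on it; everything after that is bookkeeping of exponents fed into the already established Theorem \ref{Teofunctionalineq}.
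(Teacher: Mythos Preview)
Your argument is correct and is precisely the route the paper intends: the corollary is stated there without proof, merely as a direct application of Theorem~\ref{Teofunctionalineq}, and you have supplied exactly the missing step---the comparison $|h(x)|_\K \asymp |x|_G^{\nu}$ via homogeneity and a compactness argument on the unit sphere $G_0\setminus G_1$---together with the routine verification that the parameter $\nu a$ meets the hypotheses of the theorem.
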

\item The Vladimirov Laplacian on a graded $\K$-Lie group is defined as the $a$-homogeneous left-invariant pseudo-differential operator given by $$\MO^a :=\sum_{k=1}^r \sum_{j=1}^{b_k} \partial_{X_{k,j}}^{a/\nu_k} ,$$where $\{ X_{k,j} \}$ is a Malcev basis associated to the gradation $$\mathfrak{g} = \bigoplus_{k=1}^{r} V_{\nu_k}, \esp \esp b_k := dim(V_{\nu_k}).$$Here we are using the notation introduced in the following definition:
\begin{defi}\normalfont
Let $G$ be a graded $\K$-Lie group with Lie algebra $\mathfrak{g}$. We define the \emph{directional VT operator} in the direction of $X \in \mathfrak{g}$ by the formula $$\partial_X^\alpha f (x) :=  \frac{1 - q^\alpha}{1 - q^{- (\alpha + 1)}} \int_{\K} \frac{f(x \cdot \mathbb{exp}(tX)^{-1}) - f(x)}{|t |_\K^{\alpha + 1}} dt .$$
\end{defi}
For the heat kernel of the Vladimirov Laplacian on the groups $\mathbb{H}_d$ or $\mathbb{E}_4$ the following properties are proven in \cite{FundSolVladimirovOp}:
\begin{teo}\label{teoheatkernelpropertiesgroups}
Let $G$ be either $\mathbb{H}_d$ or $\mathbb{E}_4$. Denote by $\MO^a_2$ the self-adjoint extension of $\MO^a$ to $L^2 (G)$. Then the heat kernel $h_{\MO^a_2} $ associated to the Vladimirov Laplacian $\MO^a$ on $G$ has the following properties: 

\begin{itemize}
    \item[(i)] $h_{\MO^a_2} (t, \cdot) *  h_{\MO^a_2} (s, \cdot) = h_{\MO^a_2} (t + s , \cdot )  $, for any $s,t>0$.
    \item[(ii)] $h_{\MO^a_2} ( | \gamma |_p^a t, D_\gamma (x)) = |\gamma|^{-Q}_p h_{\MO^a_2} ( t , x)$, for all $x \in G$ and any $t>0$, $\gamma \in \Q_p^*$.
\item[(iii)] $h_{\MO^a_2} ( t , x) = \overline{h_{\MO^a_2} ( t , x^{-1})},$ for all $x \in G$.  
\item[(iv)] The heat semigroup $e^{- t \MO_2^a}$ is symmetric and Markovian. Moreover, the following estimate holds for its kernel: $$h_{\MO^a_2} ( t , x) \asymp t^{-Q}.$$
\end{itemize}
\end{teo}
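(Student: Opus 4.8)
The plan is to read off (i)--(iv) from two structural features of the operator $\MO^a$, namely that it is left-invariant and $a$-homogeneous, together with the spectral theorem, in close analogy with the analysis of the Vladimirov operator on $\Q_p^d$ but with group translation in place of coordinate translation. Observe that items (i)--(iii), and the symmetric-and-Markovian part of (iv), require nothing particular about $\mathbb{H}_d$ or $\mathbb{E}_4$; only the two-sided estimate at the end of (iv) is sensitive to the specific group, which is why the statement is confined to these two examples.

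First I would record the setup. Each directional VT operator $\partial_{X_{k,j}}^{a/\nu_k}$ is left-invariant, hence so is $\MO^a$, and its associated quadratic form
\[
\mathcal{E}(f,f) \;=\; c\sum_{k=1}^{r}\sum_{j=1}^{b_k}\int_G\!\int_{\K}\bigl|f(x\,\mathbb{exp}(tX_{k,j})^{-1})-f(x)\bigr|^2\,|t|_\K^{-a/\nu_k-1}\,dt\,dx
\]
is manifestly nonnegative; being a regular Dirichlet form, its generator is a self-adjoint extension of $\MO^a$, namely $\MO^a_2$, and the associated semigroup $e^{-t\MO^a_2}$ is symmetric and Markovian by Beurling--Deny and Fukushima theory --- this is precisely the assertion of (iv) concerning symmetry and the Markov property. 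Left-invariance of $\MO^a_2$ forces each $e^{-t\MO^a_2}$ to act as right convolution by a kernel, $e^{-t\MO^a_2}f=f*h_{\MO^a_2}(t,\cdot)$, which is the definition of the heat kernel. Finally, $\MO^a$ is $a$-homogeneous: the computation is identical to the one carried out above for $\mathscr{D}^a$ and gives $\MO^a(f\circ D_\gamma)=|\gamma|_\K^{a}(\MO^a f)\circ D_\gamma$.

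Granting this, (i) is just the semigroup law $e^{-(t+s)\MO^a_2}=e^{-t\MO^a_2}e^{-s\MO^a_2}$ rewritten through kernels using associativity of convolution on $G$. Property (iii) follows because the adjoint of $f\mapsto f*k$ is $f\mapsto f*k^{*}$ with $k^{*}(x)=\overline{k(x^{-1})}$, so self-adjointness of $e^{-t\MO^a_2}$ gives $h_{\MO^a_2}(t,x)=\overline{h_{\MO^a_2}(t,x^{-1})}$. For (ii), setting $U_\gamma f=f\circ D_\gamma$, the $a$-homogeneity reads $U_\gamma^{-1}\MO^a_2 U_\gamma=|\gamma|_\K^{a}\MO^a_2$, hence $e^{-t\MO^a_2}(f\circ D_\gamma)=\bigl(e^{-|\gamma|_\K^{a}t\,\MO^a_2}f\bigr)\circ D_\gamma$; expanding both sides as convolutions and substituting with $|\det D_\gamma|=|\gamma|_\K^{Q}$ yields $h_{\MO^a_2}(|\gamma|_p^{a}t,D_\gamma x)=|\gamma|_p^{-Q}h_{\MO^a_2}(t,x)$.

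The substantive point is the estimate in (iv), and this is where the concrete description of $\mathbb{H}_d$ and $\mathbb{E}_4$ is used. The approach is to diagonalise $e^{-t\MO^a_2}$ by the group Fourier transform: identify the operator-valued symbol $\sigma(\pi)$ of $\MO^a$ on the unitary dual, check that it is positive and homogeneous of degree $a$ for the dual dilations, conclude $\widehat{h_{\MO^a_2}(t,\cdot)}(\pi)=e^{-t\sigma(\pi)}$, and then feed this into Fourier inversion. Rescaling the resulting integral by $D_{t^{1/a}}$ --- legitimate by the scaling relation (ii) --- reduces everything to the case $t=1$, and on an ultrametric group that integral collapses to an explicit geometric series, which simultaneously produces the power of $t$ and the constants. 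The main obstacle is precisely that $\MO^a$ on a noncommutative $G$ is \emph{not} diagonalised by the additive Fourier transform of $\Q_p^{d}$: one must either work with the genuinely infinite-dimensional irreducible representations of $\mathbb{H}_d$ and $\mathbb{E}_4$, or pass to coexponential coordinates in which $\MO^a$ becomes a sum of one-dimensional VT operators plus remainder terms coming from the Baker--Campbell--Hausdorff formula, and then control those remainders while keeping both the upper and the lower bound --- this is the technical heart of \cite{FundSolVladimirovOp}.
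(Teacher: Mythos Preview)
The paper does not prove this theorem at all: it is stated there as a quotation of results established in \cite{FundSolVladimirovOp}, introduced with the sentence ``the following properties are proven in \cite{FundSolVladimirovOp}'', and no argument is given in the present paper. So there is no proof in the paper to compare your proposal against.

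That said, your outline is a reasonable sketch of how such results are typically obtained, and you correctly identify which parts are soft (the semigroup law, the scaling from homogeneity, the symmetry from self-adjointness, the Markov property from the Dirichlet-form structure) and which part is hard (the two-sided pointwise estimate in (iv)). You also correctly note that the estimate is the only item that genuinely requires the specific structure of $\mathbb{H}_d$ and $\mathbb{E}_4$, and you defer that to \cite{FundSolVladimirovOp}, which is exactly what the paper itself does. In short, you have supplied more than the paper does; just be aware that your sketch of (iv) is an outline of a strategy rather than a proof, and the actual execution --- controlling the operator-valued symbol on the unitary dual, or equivalently the BCH remainder terms in coexponential coordinates, well enough to get matching upper and lower bounds --- is nontrivial and is precisely the content of the cited reference.
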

And as a corollary of the above it holds:
\begin{coro}
Let $G$ be either $\mathbb{H}_d$ or $\mathbb{E}_4$. Let $0 < a < Q.$ Then a fundamental solution for the Vladimirov Laplacian exists and it defines an $ a - Q$-homogeneous function given by $$\textbf{h}_{\MO^a_2}(x) := \int_0^{\infty} h_{\MO^a_2} ( t, x) dt.$$In consequence: $$\textbf{h}_{\MO^a_2}(x ) \asymp |x|_G^{a - Q}.$$ 
\end{coro}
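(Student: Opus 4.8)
The plan is to realize the fundamental solution through the classical subordination formula, integrating the heat kernel in time, and to read off its homogeneity and its two-sided size bound from the scaling identity and the estimates recorded in Theorem~\ref{teoheatkernelpropertiesgroups}. Throughout, $G$ is $\mathbb{H}_{d}$ or $\mathbb{E}_4$, both non-compact, and $\MO^a_2$ is the self-adjoint extension of $\MO^a$ to $L^2(G)$ with heat kernel $h_{\MO^a_2}(t,\cdot)$. The first and main point is the convergence of the defining integral $\textbf{h}_{\MO^a_2}(x)=\int_0^\infty h_{\MO^a_2}(t,x)\,dt$ for $x\neq e$, and this is precisely where the restriction $0<a<Q$ enters. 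Combining the scaling property (ii) --- which for $t=1$ and $|\gamma|_p^{a}=s$ gives $h_{\MO^a_2}(s,e)=s^{-Q/a}h_{\MO^a_2}(1,e)$, so $h_{\MO^a_2}(t,\cdot)$ has on-diagonal size $t^{-Q/a}$ --- with the off-diagonal control of (iv), one obtains a space--time estimate of stable type, $h_{\MO^a_2}(t,x)\asymp t\big(t^{1/a}+|x|_G\big)^{-Q-a}$. Splitting $\int_0^\infty h_{\MO^a_2}(t,x)\,dt$ at $t=|x|_G^{a}$, the small-time part is $\asymp |x|_G^{-Q-a}\int_0^{|x|_G^{a}}t\,dt$, finite for every $a>0$, while the large-time part is $\asymp \int_{|x|_G^{a}}^{\infty}t^{-Q/a}\,dt$, finite exactly because $Q/a>1$, i.e.\ because $a<Q$. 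Thus $\textbf{h}_{\MO^a_2}$ is a well-defined finite function on $G\setminus\{e\}$, and an elementary summation of the geometric series over the spheres $\{|x|_G=p^{-n}\}$ shows it is locally integrable on $G$, since $a-Q>-Q$.

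Homogeneity follows by scaling the time variable. Substituting $t=|\lambda|_p^{a}s$ in the defining integral and applying (ii) with $\gamma=\lambda$,
\[
\textbf{h}_{\MO^a_2}(D_\lambda x)=\int_0^\infty h_{\MO^a_2}(|\lambda|_p^{a}s,D_\lambda x)\,|\lambda|_p^{a}\,ds=|\lambda|_p^{a-Q}\int_0^\infty h_{\MO^a_2}(s,x)\,ds=|\lambda|_p^{a-Q}\,\textbf{h}_{\MO^a_2}(x),
\]
so $\textbf{h}_{\MO^a_2}$ is homogeneous of degree $a-Q$. Moreover $h_{\MO^a_2}(t,\cdot)$ is continuous and strictly positive (as the heat kernel of a non-degenerate jump semigroup), and the bound of the previous step supplies a locally uniform integrable majorant, so $\textbf{h}_{\MO^a_2}$ is continuous and strictly positive on $G\setminus\{e\}$. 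Writing any $x$ with $|x|_G=p^{-n}$ as $x=D_{p^{n}}x'$ with $|x'|_G=1$, homogeneity gives $\textbf{h}_{\MO^a_2}(x)=|x|_G^{a-Q}\,\textbf{h}_{\MO^a_2}(x')$ with $x'$ in the compact unit sphere $G_0\setminus G_1$; since $\textbf{h}_{\MO^a_2}$ attains a positive minimum and a finite maximum there, this yields $\textbf{h}_{\MO^a_2}(x)\asymp|x|_G^{a-Q}$. (Alternatively, insert the two-sided bound for $h_{\MO^a_2}$ directly into the defining integral and rescale by $t=|x|_G^{a}u$.)

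It remains to check that $\textbf{h}_{\MO^a_2}$ is genuinely a fundamental solution, i.e.\ $\MO^a_2\textbf{h}_{\MO^a_2}=\delta_e$. Pairing against $f\in\mathcal{D}(G)$ and using Fubini together with the heat equation $\partial_t e^{-t\MO^a_2}f=-\MO^a_2 e^{-t\MO^a_2}f$,
\[
\langle \MO^a_2\textbf{h}_{\MO^a_2},f\rangle=-\int_0^\infty \partial_t\big((e^{-t\MO^a_2}f)(e)\big)\,dt=f(e)-\lim_{t\to\infty}(e^{-t\MO^a_2}f)(e).
\]
The boundary term at $t\to 0^{+}$ produces $f(e)$ because the semigroup is Markovian, hence strongly continuous there (property (iv)), and the term at $t\to\infty$ vanishes because $G$ is non-compact, so the Markovian evolution disperses the mass of $f$ to infinity and $e^{-t\MO^a_2}f\to 0$ uniformly; hence $\MO^a_2\textbf{h}_{\MO^a_2}=\delta_e$, and in particular a fundamental solution exists. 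The main obstacle in this argument is the very first step: upgrading the qualitative bound of Theorem~\ref{teoheatkernelpropertiesgroups}(iv) to the sharp space--time estimate that simultaneously secures convergence of $\int_0^\infty h_{\MO^a_2}(t,x)\,dt$ and isolates the sharp range $0<a<Q$; once that estimate is available, the homogeneity, the two-sided bound, and the fundamental-solution identity are all routine rescaling computations.
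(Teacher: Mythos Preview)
The paper does not supply its own proof of this corollary; it merely states it as a consequence of Theorem~\ref{teoheatkernelpropertiesgroups} and defers all details to the external reference \cite{FundSolVladimirovOp}. Your argument is therefore not an alternative to the paper's proof but rather a full reconstruction of the standard subordination argument that the paper omits, and it is essentially correct.

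One point worth making explicit: the estimate recorded in item (iv) of Theorem~\ref{teoheatkernelpropertiesgroups} as printed, $h_{\MO^a_2}(t,x)\asymp t^{-Q}$, is visibly defective --- it carries no dependence on $x$ and, taken literally, would make $h_{\MO^a_2}(t,\cdot)$ non-integrable over $G$, contradicting the Markov property in the same item. The bound one actually needs is the stable-type two-sided estimate you write down, $h_{\MO^a_2}(t,x)\asymp t\,(t^{1/a}+|x|_G)^{-Q-a}$, and you are right to flag that upgrading whatever is intended by (iv) to this sharp space--time form is the only substantive step; it is presumably what is proved in \cite{FundSolVladimirovOp}. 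Once that estimate is available, your splitting at $t=|x|_G^{a}$, the homogeneity computation via the substitution $t=|\lambda|_p^{a}s$, the compactness argument on the unit sphere $G_0\setminus G_1$, and the boundary-term evaluation for the fundamental-solution identity are all routine and correctly executed.
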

\begin{coro}
Let $\beta\in \C$ such that $0< \mathfrak{Re}(\beta) < Q$. Then the linear operator $L^\beta:=(\MO_2^a)^{\beta/a}$ defined via functional calculus possesses a fundamental solution, which is an $\beta - Q$ homogeneous distribution, determined by the Riesz potential $$\mathcal{I}_\beta (x) := \frac{1}{\Gamma(\beta/a)} \int_0^\infty t^{\beta/a - 1} h_{\MO^a_2} ( t, x)dt.$$Furthermore, for $0<\beta<Q$ we have $$\mathcal{I}_\beta (x) \asymp |x|_G^{\beta - Q}.$$  
\end{coro}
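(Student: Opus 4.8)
The plan is to deduce everything from the subordination formula, in the same way that the preceding corollary treats the case $\beta = a$ (where $\Gamma(\beta/a) = \Gamma(1) = 1$ and $\mathcal{I}_a = \textbf{h}_{\MO^a_2}$). First I would start from the scalar identity
$$\lambda^{-\beta/a} = \frac{1}{\Gamma(\beta/a)} \int_0^\infty t^{\beta/a - 1} e^{-t\lambda}\, dt, \qquad \lambda > 0,\ \mathfrak{Re}(\beta) > 0,$$
and apply the spectral theorem to the non-negative self-adjoint operator $\MO^a_2$ on $L^2(G)$, obtaining $(\MO^a_2)^{-\beta/a} = \frac{1}{\Gamma(\beta/a)} \int_0^\infty t^{\beta/a - 1} e^{-t\MO^a_2}\, dt$. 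Since $e^{-t\MO^a_2}$ acts by convolution with $h_{\MO^a_2}(t, \cdot)$ and the heat kernel is non-negative (Theorem~\ref{teoheatkernelpropertiesgroups}(iv)), Tonelli's theorem allows one to move the $t$-integral inside the convolution, which identifies $(\MO^a_2)^{-\beta/a}$ with convolution by the kernel $\mathcal{I}_\beta$ of the statement. Extending $L^\beta = (\MO^a_2)^{\beta/a}$ to $\mathcal{D}'(G)$ in the usual way, the functional-calculus identity $(\MO^a_2)^{\beta/a}(\MO^a_2)^{-\beta/a} = \mathrm{Id}$ on a dense domain containing $\mathcal{D}(G)$ then gives $L^\beta (\mathcal{I}_\beta * \varphi) = \varphi$ for every $\varphi \in \mathcal{D}(G)$, which is exactly the assertion that $\mathcal{I}_\beta$ is a fundamental solution of $L^\beta$. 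I expect this matching of the abstract fractional power with an explicit convolution kernel, and the bookkeeping of operator domains it entails, to be the delicate point; everything else follows from the scaling property in Theorem~\ref{teoheatkernelpropertiesgroups}.

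Next I would verify that the integral defining $\mathcal{I}_\beta(x)$ converges absolutely for every $x \neq e$ and that $\mathcal{I}_\beta$ is $(\beta - Q)$-homogeneous. Homogeneity is an elementary change of variables: substituting $t = |\lambda|_p^a s$ in $\mathcal{I}_\beta(D_\lambda x)$ and using the scaling property $h_{\MO^a_2}(|\lambda|_p^a s, D_\lambda x) = |\lambda|_p^{-Q} h_{\MO^a_2}(s, x)$ of Theorem~\ref{teoheatkernelpropertiesgroups}(ii), the substitution contributes $|\lambda|_p^a$ from $dt$ and $|\lambda|_p^{\beta - a}$ from $t^{\beta/a - 1}$, so altogether $\mathcal{I}_\beta(D_\lambda x) = |\lambda|_p^{\beta - Q}\mathcal{I}_\beta(x)$. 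For absolute convergence I would fix $x \neq e$ and invoke the heat kernel estimates of \cite{FundSolVladimirovOp} (those already underlying the case $\beta = a$): one has $h_{\MO^a_2}(t, x) = O(t)$ as $t \to 0^+$, so the weight $t^{\mathfrak{Re}(\beta)/a - 1}$ causes no problem at the origin, while by the scaling property (ii) together with \cite{FundSolVladimirovOp} the kernel is of order $t^{-Q/a}$ as $t \to \infty$, so the integrand behaves like $t^{(\mathfrak{Re}(\beta) - Q)/a - 1}$ there, which is integrable precisely because $\mathfrak{Re}(\beta) < Q$. (The standing hypothesis $\mathfrak{Re}(\beta) > 0$ is what makes the scalar subordination formula legitimate and, via homogeneity, makes $\mathcal{I}_\beta$ --- whose modulus is $|x|_G^{\mathfrak{Re}(\beta) - Q}$ --- locally integrable near $e$.) Hence $\mathcal{I}_\beta$ is a well-defined, $(\beta - Q)$-homogeneous, locally integrable function on $G \setminus \{e\}$, in particular a distribution on $\mathcal{D}(G)$.

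Finally, for real $0 < \beta < Q$ the two-sided estimate $\mathcal{I}_\beta(x) \asymp |x|_G^{\beta - Q}$ follows from the homogeneity just established together with the compactness of the unit sphere $\{ y : |y|_G = 1 \} = G_0 \setminus G_1$. For real $\beta$ the integrand $t^{\beta/a - 1} h_{\MO^a_2}(t, x)$ is non-negative, and since $\int_0^\infty h_{\MO^a_2}(t, x)\, dt = \textbf{h}_{\MO^a_2}(x) \asymp |x|_G^{a - Q} > 0$ it is strictly positive on a set of positive $t$-measure, so $\mathcal{I}_\beta(x) > 0$ for all $x \neq e$; the local uniform convergence from the previous step also shows that $y \mapsto \mathcal{I}_\beta(y)$ is continuous on $G \setminus \{e\}$. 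Therefore $\mathcal{I}_\beta$ attains a strictly positive minimum and a finite maximum on the compact set $G_0 \setminus G_1$, and $(\beta - Q)$-homogeneity propagates these bounds to all of $G \setminus \{e\}$, yielding $\mathcal{I}_\beta(x) \asymp |x|_G^{\beta - Q}$.
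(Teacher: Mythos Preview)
The paper does not actually prove this corollary: immediately after stating it (and the preceding corollary on $\textbf{h}_{\MO^a_2}$) the text reads ``See \cite{FundSolVladimirovOp} for all the details.'' So you are not competing against an argument in the paper but rather supplying what the cited reference presumably contains.

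Your approach is the standard one and is correct in outline. The subordination identity plus the spectral theorem identifies $(\MO^a_2)^{-\beta/a}$ with convolution by $\mathcal{I}_\beta$; the homogeneity computation via the scaling law of Theorem~\ref{teoheatkernelpropertiesgroups}(ii) is clean and correct; and deducing the two-sided bound from positivity and continuity on the compact shell $G_0\setminus G_1$ together with $(\beta-Q)$-homogeneity is exactly the right move. Two minor points of presentation: first, you invoke Tonelli by citing non-negativity of the heat kernel, but for complex $\beta$ the factor $t^{\beta/a-1}$ is not real-valued, so it is really your subsequent absolute-convergence argument that justifies the interchange via Fubini---the logic is sound but the order could be tightened. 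Second, the small-time decay $h_{\MO^a_2}(t,x)=O(t)$ for fixed $x\neq e$ is the one genuinely nontrivial input you draw from \cite{FundSolVladimirovOp}; this is entirely appropriate, since the paper itself defers to that reference for all of the heat-kernel analysis, and in any case only some positive-power decay (not specifically $O(t)$) is needed to make the integral converge at $0$ when $\mathfrak{Re}(\beta)<a$.
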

See \cite{FundSolVladimirovOp} for all the details. Consequently, we get the following corollary of Theorem \ref{Teofunctionalineq}: 

\begin{coro}
Let $G$ be either $\mathbb{H}_d$ or $\mathbb{E}_4$ and denote by $Q$ the homogeneous dimension of $G$. Let $a \in \C$. Let $1<r \leq s < \infty$ and $0 < \mathfrak{Re}(a) <Q$. Let $0 \leq b <Q$ and $\frac{\mathfrak{ Re}(a)}{Q} = \frac{1}{r} - \frac{1}{s} + \frac{b}{sQ}.$ Then there exists a positive constant $C=C(r,s,a, b)$ such that  
\begin{equation}\label{HardySobolevin}
\left\| \frac{f}{|x|^{b/s}_G}  \right\|_{L^s (G)} \leq C \| \MO^a f \|_{L^r (G)} ,   
\end{equation}
for all $f \in \mathcal{D}(G).$

Moreover, if $b=s$ from \eqref{HardySobolevin}, we get the Hardy inequality and if $b=0$, from \eqref{HardySobolevin}, we get the Sobolev inequality. 
\end{coro}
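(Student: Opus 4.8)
The plan is to invert $\MO^a$ against its fundamental solution and then to apply Theorem~\ref{Teofunctionalineq} verbatim. By the preceding corollary on the Riesz potentials $\mathcal{I}_\beta$, specialised to $\beta=a$, for $0<\mathfrak{Re}(a)<Q$ the operator $\MO^a$ possesses a fundamental solution $\mathcal{I}_a=\mathbf{h}_{\MO^a_2}$, which is an $(a-Q)$-homogeneous distribution satisfying $|\mathcal{I}_a(x)|\asymp|x|_G^{\mathfrak{Re}(a)-Q}$. (That corollary records the two-sided asymptotics for real $\beta\in(0,Q)$; here only the upper bound $|\mathcal{I}_a(x)|\le C_0|x|_G^{\mathfrak{Re}(a)-Q}$ is needed, and it follows for complex order by taking moduli in the integral representation of $\mathcal{I}_a$ and using the heat-kernel estimates of Theorem~\ref{teoheatkernelpropertiesgroups}.)

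First I would establish the inversion identity $(\MO^a f)*\mathcal{I}_a=f$ for every $f\in\mathcal{D}(G)$. Since $\MO^a$ is left invariant it commutes with right convolutions, so $\MO^a(g*\mathcal{I}_a)=g*(\MO^a\mathcal{I}_a)=g*\delta_e=g$ for $g\in\mathcal{D}(G)$; hence $g\mapsto g*\mathcal{I}_a$ is a right inverse of $\MO^a$ on $\mathcal{D}(G)$, and since $\MO^a$ is injective there (being a positive operator with trivial $L^2$-kernel) this right inverse is also a left inverse, giving $(\MO^a f)*\mathcal{I}_a=f$. En route one checks that both sides of \eqref{HardySobolevin} are finite on test functions: $\MO^a$ is a homogeneous operator of positive order, so $\MO^a f$ has enough decay at infinity to lie in $L^r(G)$, while $f/|x|_G^{b/s}\in L^s(G)$ because $f$ is locally constant near $e$ and $b<Q$.

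Next I would apply Theorem~\ref{Teofunctionalineq} with kernel $K_a:=\mathcal{I}_a$ and with $\MO^a f$ in the role of $f$. The hypotheses $1<r\le s<\infty$, $0\le b<Q$ and $\frac{\mathfrak{Re}(a)}{Q}=\frac1r-\frac1s+\frac{b}{sQ}$ are exactly those assumed here, and the last hypothesis of Theorem~\ref{Teofunctionalineq}, namely $0<\mathfrak{Re}(a)<Q/r$, is automatic: the constraint together with $b<Q$ gives $\frac{\mathfrak{Re}(a)}{Q}<\frac1r-\frac1s+\frac1s=\frac1r$. Theorem~\ref{Teofunctionalineq} therefore yields $\bigl\|\bigl((\MO^a f)*\mathcal{I}_a\bigr)|x|_G^{-b/s}\bigr\|_{L^s(G)}\le C_1\|\MO^a f\|_{L^r(G)}$, and substituting the inversion identity turns the left-hand side into $\|f\,|x|_G^{-b/s}\|_{L^s(G)}$, which is \eqref{HardySobolevin}.

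Finally the two distinguished cases are immediate specialisations. Taking $b=0$ makes $|x|_G^{b/s}\equiv1$, so \eqref{HardySobolevin} becomes the Hardy--Littlewood--Sobolev-type estimate $\|f\|_{L^s(G)}\le C\|\MO^a f\|_{L^r(G)}$ under the relation $\frac{\mathfrak{Re}(a)}{Q}=\frac1r-\frac1s$; taking $b=s$ (admissible precisely when $s<Q$) makes $|x|_G^{b/s}=|x|_G$, so \eqref{HardySobolevin} becomes the Hardy-type estimate $\|f/|x|_G\|_{L^s(G)}\le C\|\MO^a f\|_{L^r(G)}$. I expect the only genuinely delicate point to be the first paragraph — the existence of the fundamental solution with the sharp homogeneity for a \emph{complex} order $a$ — but that is supplied by the preceding corollaries via the heat-kernel bounds of Theorem~\ref{teoheatkernelpropertiesgroups}; everything after that is a formal consequence of Theorem~\ref{Teofunctionalineq}.
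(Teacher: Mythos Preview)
Your proposal is correct and follows exactly the route the paper intends: the paper presents this result with no separate proof, stating only that it is a corollary of Theorem~\ref{Teofunctionalineq} via the preceding fundamental-solution estimates $\mathcal{I}_a(x)\asymp|x|_G^{a-Q}$, and you have simply spelled out the implicit inversion step $(\MO^a f)*\mathcal{I}_a=f$ and verified that the hypothesis $\mathfrak{Re}(a)<Q/r$ of Theorem~\ref{Teofunctionalineq} is automatic from $b<Q$. Your additional remark on extending the upper bound to complex $a$ via moduli in the Riesz-potential integral is a reasonable way to cover the gap between the real-parameter asymptotics in the preceding corollaries and the complex $a$ allowed in the statement.
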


\end{itemize}

Another consequence of Theorem \ref{Teofunctionalineq} is the following version of the uncertainly principle.
\begin{teo}[Uncertainly principle]\label{UP}
Let $G$ be either $\mathbb{H}_d$ or $\mathbb{E}_4$ and denote by $Q$ the homogeneous dimension of $G$. Let $a >0$, $r>1$ such that $Q>ar$. Then there exists a positive constant $C=C(r,a, Q)$ such that  
\begin{equation}\label{up}
\left\| f \right\|_{L^2 (G)} \leq C \| \MO^a f \|_{L^r (G)} \||\cdot|^{a}_Gf\|_{L^{r'}(G)},   
\end{equation}
for all $f \in \mathcal{D}(G).$
\end{teo}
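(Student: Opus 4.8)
The plan is to derive \eqref{up} directly from the Hardy--Sobolev inequality \eqref{HardySobolevin} (for the Vladimirov Laplacian $\MO^a$ on $\mathbb{H}_d$ or $\mathbb{E}_4$) together with a single application of H\"older's inequality; no new estimate is required. Since $a>0$ is real we have $\mathfrak{Re}(a)=a$, and the hypothesis $Q>ar$ combined with $r>1$ forces $0<a<ar<Q$ and $1<r<Q/a<\infty$. First I would apply \eqref{HardySobolevin} with the choices $s:=r$ and $b:=ar$. These meet the standing hypotheses of that inequality: $1<r\le s<\infty$, $0<a<Q$, $0\le b<Q$, and the balance relation $\frac{a}{Q}=\frac1r-\frac1s+\frac{b}{sQ}$ holds because $\frac1r-\frac1s=0$ and $\frac{b}{sQ}=\frac{ar}{rQ}=\frac{a}{Q}$. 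Since $b/s=a$, \eqref{HardySobolevin} becomes
\begin{equation*}
\Big\|\,\frac{f}{|x|_G^{\,a}}\,\Big\|_{L^{r}(G)}\le C_0\,\|\MO^a f\|_{L^{r}(G)},\qquad f\in\mathcal{D}(G),
\end{equation*}
for some constant $C_0=C_0(r,a,Q)>0$.

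Next I would split the square of the $L^2$-norm by inserting the weight $|x|_G^{a}$ and its reciprocal. For $f\in\mathcal{D}(G)$ one has, pointwise for $x\neq e$, the identity $|f(x)|^2=|x|_G^{-a}|f(x)|\cdot|x|_G^{a}|f(x)|$; integrating over $G$ (the set $\{e\}$ being null) and applying H\"older's inequality with the conjugate exponents $r$ and $r'$ gives
\begin{equation*}
\|f\|_{L^2(G)}^2=\int_G\frac{|f(x)|}{|x|_G^{\,a}}\,|x|_G^{\,a}|f(x)|\,dx\le\Big\|\,\frac{f}{|x|_G^{\,a}}\,\Big\|_{L^{r}(G)}\,\big\||\cdot|_G^{\,a}f\big\|_{L^{r'}(G)}.
\end{equation*}
The second factor on the right is finite because $f$ is bounded with compact support (and if $\|\MO^a f\|_{L^r(G)}=\infty$ the asserted inequality is trivial, so we may assume all quantities finite). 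Substituting the Hardy--Sobolev bound into the first factor yields
\begin{equation*}
\|f\|_{L^2(G)}^2\le C_0\,\|\MO^a f\|_{L^{r}(G)}\,\big\||\cdot|_G^{\,a}f\big\|_{L^{r'}(G)},
\end{equation*}
which is the uncertainty inequality \eqref{up} with $C=C_0$.

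I do not expect a genuine obstacle here: all the analytic content already resides in \eqref{HardySobolevin}, and the only delicate step is the exponent bookkeeping --- checking that the single assumption $Q>ar$ is exactly what is needed to guarantee $b=ar<Q$, $a<Q$ and $r<\infty$ so that \eqref{HardySobolevin} is applicable with $s=r$ and $b=ar$. As a sanity check, both sides of the resulting inequality are multiplied by the same factor $|\lambda|_\K^{-Q}$ under the dilations $f\mapsto f\circ D_\lambda$ (using that $\MO^a$ is $a$-homogeneous and that $|\cdot|_G$ is a homogeneous quasi-norm), which is the scaling behaviour one expects of a Heisenberg-type uncertainty relation and is precisely what forces the square on $\|f\|_{L^2(G)}$.
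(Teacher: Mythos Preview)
Your proof is correct and follows essentially the same route as the paper: split $|f|^2=|x|_G^{-a}|f|\cdot|x|_G^{a}|f|$, apply H\"older with exponents $r,r'$, and then invoke the Hardy--Sobolev inequality \eqref{HardySobolevin} with $s=r$, $b=ar$ to control the factor $\||x|_G^{-a}f\|_{L^r}$. Your verification of the parameter conditions and your remark that the argument actually yields a bound on $\|f\|_{L^2}^2$ (the scale-invariant form) are both on point.
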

In addition, we can prove as well the Gagliardo-Nirenberg inequality.
\begin{teo}[Gagliardo-Nirenberg inequality]\label{GNthm}
Let $G$ be either $\mathbb{H}_d$ or $\mathbb{E}_4$ and denote by $Q$ the homogeneous dimension of $G$. Let $a>0$, $r>1$, $Q>ar$, $\tau\geq 1$, $\alpha\in (0,1)$, $s>0$ and $\frac{1}{s}=\alpha\left(\frac{1}{r}-\frac{a}{Q}\right)+\frac{1-\alpha}{\tau}$. Then we have
\begin{equation}
    \left\| f \right\|_{L^s (G)}\leq C\| \MO^a f \|^{\alpha}_{L^r (G)}\|f\|^{1-\alpha}_{L^{\tau}(G)},
\end{equation}
where $C>0.$
\end{teo}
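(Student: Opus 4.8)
The plan is to deduce the Gagliardo--Nirenberg inequality from the Sobolev inequality already available to us, combined with the classical interpolation (log-convexity) inequality for Lebesgue norms; this mirrors the standard argument on graded real Lie groups. Recall that inequality \eqref{HardySobolevin}, specialised to $b=0$, is a Sobolev-type inequality on $G$ (either $\mathbb{H}_d$ or $\mathbb{E}_4$): there is a constant $C_S>0$ with
\[
\| f \|_{L^{p}(G)} \leq C_S \,\| \MO^a f \|_{L^r(G)}, \qquad f \in \mathcal{D}(G),
\]
where $p$ is the Sobolev exponent determined by $\tfrac{1}{p} = \tfrac{1}{r} - \tfrac{a}{Q}$. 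Here $\mathfrak{Re}(a)=a$ since $a>0$ is real, and the hypothesis $Q>ar$ guarantees $\tfrac1r-\tfrac aQ>0$, so that $p\in(r,\infty)$ is a legitimate exponent; this inequality in turn rests on the Riesz-potential estimate $\mathcal{I}_\beta(x)\asymp |x|_G^{\beta-Q}$ coming from the heat-kernel properties in Theorem \ref{teoheatkernelpropertiesgroups} together with Theorem \ref{Teofunctionalineq}.

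Next I would invoke the interpolation inequality for $L^p$-norms. By hypothesis $\alpha\in(0,1)$ and
\[
\frac1s \;=\; \alpha\Big(\frac1r-\frac aQ\Big) + \frac{1-\alpha}{\tau} \;=\; \frac{\alpha}{p} + \frac{1-\alpha}{\tau},
\]
so $\tfrac1s$ is a strict convex combination of $\tfrac1p$ and $\tfrac1\tau$; in particular $s$ lies between $p$ and $\tau$, and since $\tfrac1r-\tfrac aQ<1$ and $\tfrac1\tau\le1$ one gets $\tfrac1s<1$, i.e.\ $s>1$, so every norm below is a genuine norm. Writing $|f|^{s}=|f|^{\alpha s}\,|f|^{(1-\alpha)s}$ and applying H\"older's inequality with conjugate exponents $\tfrac{p}{\alpha s}$ and $\tfrac{\tau}{(1-\alpha)s}$ (whose reciprocals sum to $1$ precisely by the displayed relation) yields
\[
\| f \|_{L^{s}(G)} \leq \| f \|_{L^{p}(G)}^{\alpha}\,\| f \|_{L^{\tau}(G)}^{1-\alpha}.
\]
This step uses nothing about the group structure, only $\tau\ge1$ so that $\|f\|_{L^{\tau}(G)}$ is well defined.

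Combining the two displays gives, for all $f\in\mathcal{D}(G)$,
\[
\| f \|_{L^{s}(G)} \leq \big(C_S \| \MO^a f \|_{L^r(G)}\big)^{\alpha}\,\| f \|_{L^{\tau}(G)}^{1-\alpha} = C_S^{\alpha}\,\| \MO^a f \|_{L^r(G)}^{\alpha}\,\| f \|_{L^{\tau}(G)}^{1-\alpha},
\]
which is the asserted inequality with $C=C_S^{\alpha}$. There is no real obstacle in the interpolation argument itself; the only thing to check carefully is that the parameter constraints in the hypotheses ($a>0$, $r>1$, $Q>ar$, $\alpha\in(0,1)$, $\tau\ge1$, and the scaling relation for $s$) are exactly what is needed for the Sobolev inequality to apply with the exponent $p$ above and for $s$ to be an admissible interpolation exponent between $p$ and $\tau$. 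The genuinely substantive input is therefore the Sobolev inequality, and hence the heat-kernel analysis of the Vladimirov Laplacian on $\mathbb{H}_d$ and $\mathbb{E}_4$.
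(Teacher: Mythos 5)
Your proposal is correct and follows essentially the same route as the paper: the paper also splits $|f|^{s}=|f|^{\alpha s}|f|^{(1-\alpha)s}$, applies H\"older with the exponents dictated by the relation $\frac{1}{s}=\alpha\left(\frac{1}{r}-\frac{a}{Q}\right)+\frac{1-\alpha}{\tau}$, and then invokes the Sobolev inequality (the $b=0$ case of \eqref{HardySobolevin}) with the exponent $\frac{Qr}{Q-ar}$, which is exactly your $p$. Your additional remarks on why $p$ and $s$ are admissible exponents are accurate but not a point of divergence from the paper's argument.
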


Finally, from Theorem \ref{Teofunctionalineq} we obtain the Hardy-Littlewood-Sobolev inequality on graded $\K$-Lie groups: 

\begin{teo}\label{teoHardy-Littlewood-Sobolev}
Let $G$ be a grade $\K$-Lie group of homogeneous dimension $Q$. Let $1<r<s< \infty$, $0<\lambda < Q,$ $\frac{1}{s}= \frac{1}{r} + \frac{\lambda}{Q} -1$, and $f \in L^r(G)$. Then we have $$\| f * | \cdot|^{-\lambda}_G \|_{L^s (G)} \lesssim \| f \|_{L^r (G)}. $$
\end{teo}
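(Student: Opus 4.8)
The plan is to deduce the inequality directly from Theorem \ref{Teofunctionalineq} by making the right choice of parameters. Recall Theorem \ref{Teofunctionalineq} asserts that if $1<r\le s<\infty$, $a\in\C$ with $0<\mathfrak{Re}(a)<Q/r$, $0\le b<Q$, and the balance condition $\frac{\mathfrak{Re}(a)}{Q}=\frac1r-\frac1s+\frac{b}{sQ}$ holds, then for any kernel $K_a$ with $|K_a(x)|\le C_0|x|_G^{\mathfrak{Re}(a)-Q}$ one has $\big\| \frac{f*K_a}{|x|_G^{b/s}}\big\|_{L^s(G)}\lesssim\|f\|_{L^r(G)}$. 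So first I would simply set $b=0$, so that the weight $|x|_G^{-b/s}$ disappears and the left-hand side becomes $\|f*K_a\|_{L^s(G)}$. With $b=0$ the balance condition reduces to $\frac{\mathfrak{Re}(a)}{Q}=\frac1r-\frac1s$. Next, I want $K_a(x)=|x|_G^{-\lambda}$, i.e. I choose $a:=Q-\lambda$, which is a positive real number since $0<\lambda<Q$; then $|x|_G^{a-Q}=|x|_G^{-\lambda}$ exactly, so the kernel hypothesis holds with $C_0=1$. With this choice $\frac{a}{Q}=1-\frac{\lambda}{Q}$, and the balance condition $1-\frac{\lambda}{Q}=\frac1r-\frac1s$ is precisely the hypothesis $\frac1s=\frac1r+\frac{\lambda}{Q}-1$ of the present theorem.

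It then remains to check that the side conditions of Theorem \ref{Teofunctionalineq} are met. We need $0<\mathfrak{Re}(a)<Q/r$, i.e. $0<Q-\lambda<Q/r$. The left inequality is $\lambda<Q$, which is assumed. The right inequality, $Q-\lambda<Q/r$, rearranges to $1-\frac{\lambda}{Q}<\frac1r$, equivalently $\frac1r+\frac{\lambda}{Q}-1>0$, which by the balance identity is exactly $\frac1s>0$ — automatically true since $s<\infty$; moreover the strict inequality $r<s$ assumed here guarantees $1-\frac\lambda Q=\frac1r-\frac1s>0$, so in fact $\lambda<Q$ is consistent and $a>0$ genuinely. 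Also $1<r\le s<\infty$ is included in our hypothesis $1<r<s<\infty$, and $b=0$ trivially satisfies $0\le b<Q$. Hence all hypotheses of Theorem \ref{Teofunctionalineq} hold, and applying it yields $\|f*|\cdot|_G^{-\lambda}\|_{L^s(G)}\lesssim\|f\|_{L^r(G)}$, which is the claim.

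The only point requiring a little care — and the step I would regard as the mild "obstacle" — is verifying that the convolution $f*|\cdot|_G^{-\lambda}$ is well-defined and that the passage from $\mathcal D(G)$ (where Theorem \ref{Teofunctionalineq} may be stated) to general $f\in L^r(G)$ is legitimate; this is handled by a standard density argument, using that $\mathcal D(G)$ is dense in $L^r(G)$ for $1<r<\infty$ and that the a priori inequality on $\mathcal D(G)$ lets one extend the convolution operator to a bounded operator $L^r(G)\to L^s(G)$, the extension agreeing with the integral $f*|\cdot|_G^{-\lambda}$ wherever the latter converges absolutely. Everything else is a direct substitution into Theorem \ref{Teofunctionalineq}.
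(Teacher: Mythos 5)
Your proposal is correct and is essentially the paper's own route: the paper obtains Theorem \ref{teoHardy-Littlewood-Sobolev} precisely by specialising Theorem \ref{Teofunctionalineq} to $b=0$ and the kernel $K_a=|\cdot|_G^{a-Q}$ with $a=Q-\lambda$, and your verification of the balance condition and of $0<\mathfrak{Re}(a)<Q/r$ is exactly the needed bookkeeping. The density remark for passing from $\mathcal{D}(G)$ to general $f\in L^r(G)$ is a reasonable extra precaution but does not change the argument.
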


And from the Hardy-Littlewood-Sobolev inequality we obtain the Stein-Weiss inequality, proven in Theorem \ref{teosteinweiss}.

\begin{teo}\label{teosteinweiss}
Let $G$ be a graded $p$-adic Lie group of homogeneous dimension $Q$. Let $0< \lambda < Q$, $1< r < \infty$, $\beta  <Q/r'$, $\alpha  <Q/s$, $\alpha + \beta \geq 0$, $\frac{1}{s}= \frac{1}{r} + \frac{\alpha + \beta + \lambda}{Q} - 1$. Then for $1< r\leq s < \infty$, we have $$\| |\cdot|^{-\alpha}_GI_\lambda f \|_{L^s(G)} \leq C \| |\cdot|^\beta_G f \|_{L^r(G)}.$$
\end{teo}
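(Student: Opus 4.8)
The plan is to derive the inequality from the Hardy--Littlewood--Sobolev inequality of Theorem \ref{teoHardy-Littlewood-Sobolev} together with the weighted integral inequalities of Theorems \ref{Teofirstintegralineqnoncompact} and \ref{Teosecondintegralineqnoncompact}, by splitting the Riesz potential $I_\lambda f(x)=\int_G f(y)\,|xy^{-1}|_G^{-\lambda}\,dy$ into three parts according to whether $|y|_G$ is smaller than, equal to, or larger than $|x|_G$; the ultrametric structure makes this splitting especially transparent. I would first reduce to $f\geq 0$ (replacing $f$ by $|f|$ only increases the left-hand side), and then to the case $\beta\geq 0$: since the subgroups $G_n$ are normal one has $|xy^{-1}|_G=|y^{-1}x|_G$, so $I_\lambda$ is a symmetric convolution operator, and a routine duality computation shows that the asserted inequality for $(\alpha,\beta,r,s)$ is equivalent to the one for $(\beta,\alpha,s',r')$, with all hypotheses (in particular $1<s'\leq r'<\infty$ and the balance relation) preserved; since $\alpha+\beta\geq 0$ forces $\alpha\geq 0$ or $\beta\geq 0$, after dualizing we may assume $\beta\geq 0$. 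Setting $g:=|\cdot|_G^{\beta}f\geq 0$, the claim becomes $\big\||\cdot|_G^{-\alpha}I_\lambda(|\cdot|_G^{-\beta}g)\big\|_{L^s}\leq C\|g\|_{L^r}$, and I would record that the scaling relation $\frac1s=\frac1r+\frac{\alpha+\beta+\lambda}{Q}-1$ rearranges to $\alpha+\beta+\lambda=\frac{Q}{s}+\frac{Q}{r'}$.

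With $S_n:=G_n\setminus G_{n+1}$, on which $|x|_G=q^{-n}$, the ultrametric inequality gives $|xy^{-1}|_G=|x|_G$ when $|y|_G<|x|_G$ and $|xy^{-1}|_G=|y|_G$ when $|y|_G>|x|_G$. Writing $I_\lambda(|\cdot|_G^{-\beta}g)=A+C+B$ for these three regions, we get $A(x)\leq |x|_G^{-\lambda}\int_{B(e,|x|_G)}|y|_G^{-\beta}g(y)\,dy$ and $B(x)=\int_{G\setminus B(e,|x|_G)}|y|_G^{-\beta-\lambda}g(y)\,dy$. Then $\big\||\cdot|_G^{-\alpha}A\big\|_{L^s}$ is bounded by the left-hand side of \eqref{eq3} applied to the inner function $|y|_G^{-\beta}g$ with the homogeneous weights $\varphi(x)=|x|_G^{-(\alpha+\lambda)s}$ and $\psi(y)=|y|_G^{\beta r}$; one checks that the hypotheses of Theorem \ref{Teofirstintegralineqnoncompact}, namely $(\alpha+\lambda)s>Q$, $0\leq\beta r<Q/(r'-1)$, and the balance relation \eqref{eq4}, follow from $\beta<Q/r'$, $\beta\geq 0$, and the scaling identity. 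Likewise $\big\||\cdot|_G^{-\alpha}B\big\|_{L^s}$ is bounded using Theorem \ref{Teosecondintegralineqnoncompact} with $\varphi(x)=|x|_G^{-\alpha s}$ and $\psi(y)=|y|_G^{(\beta+\lambda)r}$, whose hypotheses $\alpha s<Q$, $(\beta+\lambda)r>Q/(r'-1)$, and \eqref{eq6} reduce to $\alpha<Q/s$ together with the scaling identity. Both pieces are therefore $\lesssim\|g\|_{L^r}$.

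The diagonal piece $C$ is the real difficulty, since on $S_n\times S_n$ the kernel $|xy^{-1}|_G^{-\lambda}$ is unbounded and cannot be absorbed into the homogeneous weights. Here I would argue shell by shell: since $|xy^{-1}|_G\leq q^{-n}$ for $x,y\in S_n$, for $x\in S_n$ we have $|x|_G^{-\alpha}C(x)=q^{n(\alpha+\beta)}\int_{S_n}\frac{g(y)}{|xy^{-1}|_G^{\lambda}}\,dy\leq q^{n(\alpha+\beta)}\big((g\mathbb{1}_{S_n})*k_n\big)(x)$, where $k_n:=|\cdot|_G^{-\lambda}\mathbb{1}_{\{|\cdot|_G\leq q^{-n}\}}$. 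If $\alpha+\beta>0$, then $\lambda t<Q$ for the exponent $t$ given by $\frac1t=1+\frac1s-\frac1r=\frac{\alpha+\beta+\lambda}{Q}$ (and $1\leq t<\infty$ because $r\leq s$), so a geometric-series computation gives $\|k_n\|_{L^t}=C_1 q^{-n(\alpha+\beta)}$; by Young's convolution inequality $\big\|q^{n(\alpha+\beta)}(g\mathbb{1}_{S_n})*k_n\big\|_{L^s}\leq C_1\|g\mathbb{1}_{S_n}\|_{L^r}$ with $C_1$ independent of $n$, and summing over $n$ while using $s\geq r$ (hence the embedding $\ell^r\hookrightarrow\ell^s$) and the fact that the sets $S_n$ partition $G\setminus\{e\}$ yields $\big\||\cdot|_G^{-\alpha}C\big\|_{L^s}\leq C_1\|g\|_{L^r}$. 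In the borderline case $\alpha+\beta=0$ (so $\beta=-\alpha\geq 0$) the weight cancels and $|x|_G^{-\alpha}C(x)\leq (g*|\cdot|_G^{-\lambda})(x)$ pointwise, so the bound is exactly the Hardy--Littlewood--Sobolev inequality of Theorem \ref{teoHardy-Littlewood-Sobolev}, whose exponent relation is $\frac1s=\frac1r+\frac\lambda Q-1$ in this case. Adding the three estimates finishes the proof; the bulk of the remaining work is the bookkeeping needed to verify that the constraints $\alpha<Q/s$, $\beta<Q/r'$, $\alpha+\beta\geq 0$ match exactly the hypotheses of the input theorems.
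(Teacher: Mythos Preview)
Your proof follows the same three-region decomposition as the paper's own argument: Theorems \ref{Teofirstintegralineqnoncompact} and \ref{Teosecondintegralineqnoncompact} handle the off-diagonal pieces $|y|_G<|x|_G$ and $|y|_G>|x|_G$, and a shell-by-shell application of Young's inequality (with the exponent $t$ determined by $1/t=1+1/s-1/r$) controls the diagonal piece $|y|_G=|x|_G$. Your two additional steps---the duality reduction to $\beta\geq 0$ and the separate treatment of the borderline $\alpha+\beta=0$ via Theorem \ref{teoHardy-Littlewood-Sobolev}---are genuine refinements that the paper's proof omits: Theorem \ref{Teofirstintegralineqnoncompact} as stated requires the weight exponent to be nonnegative (so the paper's direct application to $I_1$ with $\psi(y)=|y|_G^{\beta r}$ is not justified when $\beta<0$), and the Young-inequality computation for the diagonal term produces a divergent factor $(1-q^{-t(\alpha+\beta)})^{-1}$ when $\alpha+\beta=0$, so your appeal to Hardy--Littlewood--Sobolev there is the right fix.
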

\section{Strong Type Estimates}
To begin with the proof of our main results we need the following simple proposition about a change of variable on constant-order Vilenkin groups.

\begin{pro}\label{prointegralradialfunctions}
Let $G$ be a constant-order Vilenkin group, let us say $|G_n / G_{n+1}| = \varkappa$, for all $n \in \Z$. Then for every $f(| \cdot |) \in L^1_{loc} (G)$: $$\frac{1}{|x|} \int_{B(e,|x|)} f(|t|) dt = \int_{B(e,1)} f(|x| \cdot |t|) dt.$$
\end{pro}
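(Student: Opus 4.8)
The plan is to reduce the identity to a discrete computation by exploiting the fact that on a constant-order Vilenkin group the distance function $|\cdot|$ takes only the values $|G_n| = \varkappa^{-n}$, $n \in \Z$, and the radial function $f(|\cdot|)$ is therefore determined by the sequence $a_n := f(\varkappa^{-n})$. First I would fix $x \in G \setminus \{e\}$ with $x \in G_m \setminus G_{m+1}$, so that $|x| = \varkappa^{-m}$, and note that the ball $B(e,|x|)$ is precisely the subgroup $G_m$, which has measure $|G_m| = \varkappa^{-m} = |x|$ (using the normalization $|G_0|=1$). The left-hand side then becomes
\[
\frac{1}{\varkappa^{-m}} \int_{G_m} f(|t|)\, dt = \varkappa^{m} \sum_{n \geq m} a_n\, |G_n \setminus G_{n+1}|,
\]
where $|G_n \setminus G_{n+1}| = |G_n| - |G_{n+1}| = \varkappa^{-n}(1 - \varkappa^{-1})$, since the decomposition $G_m = \bigsqcup_{n \geq m} (G_n \setminus G_{n+1})$ holds (up to the point $e$, of measure zero) because $\bigcap_n G_n = \{e\}$ and $f$ is constant on each shell $G_n \setminus G_{n+1}$.

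Next I would compute the right-hand side. On $B(e,1) = G_0$, a point $t$ lies in the shell $G_k \setminus G_{k+1}$ for $k \geq 0$, where $|t| = \varkappa^{-k}$, and then $|x|\cdot|t| = \varkappa^{-m}\varkappa^{-k} = \varkappa^{-(m+k)}$, so $f(|x|\cdot|t|) = a_{m+k}$. Hence
\[
\int_{G_0} f(|x|\cdot|t|)\, dt = \sum_{k \geq 0} a_{m+k}\, |G_k \setminus G_{k+1}| = \sum_{k \geq 0} a_{m+k}\, \varkappa^{-k}(1-\varkappa^{-1}).
\]
Re-indexing with $n = m+k$ (so $n$ ranges over $n \geq m$ and $\varkappa^{-k} = \varkappa^{m-n}$) gives exactly $\varkappa^{m}\sum_{n \geq m} a_n\, \varkappa^{-n}(1-\varkappa^{-1})$, which matches the left-hand side term by term. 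This establishes the identity for every $x \neq e$; the value at $x=e$ is irrelevant since the statement is about $B(e,|x|)$ for $x \in G \setminus \{e\}$ (and in any case $\{e\}$ has measure zero).

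I do not expect a serious obstacle here; the only points requiring a little care are: (i) justifying that $B(e,|x|)$ equals $G_m$ and has measure $|x|$, which follows directly from Definition \ref{metricvilenkingroups} and the normalization $|G_0| = 1$ together with $|G_n/G_{n+1}| = \varkappa$; (ii) checking the interchange of integration and summation, which is legitimate because $f \in L^1_{loc}(G)$ makes the series absolutely convergent on the compact set $G_m$; and (iii) the bookkeeping of the re-indexing $n = m+k$. An alternative, essentially equivalent, route would be to observe that both sides are obtained from $\int_{G_m} f(|t|)\,dt$ by the measure-preserving-up-to-scaling change of variables $t \mapsto x^{-1}$-type dilation $t \mapsto \varkappa^{m}$-scaling realized concretely by the bijection $G_m \to G_0$ sending shells to shells, under which $d t$ scales by $|x|^{-1}$; but the direct series computation above is the cleanest and avoids having to formalize an abstract dilation on a general constant-order Vilenkin group.
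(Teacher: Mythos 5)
Your proposal is correct and follows essentially the same route as the paper's own proof: decompose the ball $B(e,|x|)=G_m$ into the shells $G_n\setminus G_{n+1}$ (each of measure $\varkappa^{-n}(1-\varkappa^{-1})$), do the same for $B(e,1)=G_0$, and match the two series by the re-indexing $n=m+k$. The extra remarks on the measure of $B(e,|x|)$ and the interchange of sum and integral are fine but do not change the argument.
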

\begin{proof}
The proof of this proposition is just a direct calculation. Take $x \in G_m \setminus G_{m+1}$ so that $|x|= \varkappa^{-m}$. Then \begin{align*}
    \frac{1}{|x|} \int_{B(e,|x|)} f(|t|) dt &= \varkappa^{m} \sum_{k \leq -m} f(\varkappa^k ) \int_{|t|=\varkappa^k} dt \\ &= \varkappa^{m} \sum_{k \leq -m} f(\varkappa^k ) \varkappa^{k} (1-  \varkappa^{-1}) \\ &= \varkappa^{m} \sum_{k \leq 0} f(\varkappa^{-m} \varkappa^k ) \varkappa^{-m} \varkappa^k (1-\varkappa^{-1}) \\ &= \sum_{k \leq 0} f(\varkappa^{-m} \varkappa^k ) \varkappa^k (1-\varkappa^{-1}) \\ &= \int_{B(e,1)} f(|x| \cdot |t|) dt.
\end{align*}This conclude the proof.  
\end{proof} 
The above change of variable, together with the H{\"o}lder inequality and the Minkowski integral inequality, are the necessary ingredients for the proof of Theorem \ref{teosharpestimateradialfunct}. 
\begin{proof}[Proof of Theorem \ref{teosharpestimateradialfunct} :]First we use Proposition \ref{prointegralradialfunctions}: 
\begin{align*}
    \| H_\delta f \|_{L^r_\alpha (G)} &= \Big(\int_G \Big| \frac{1}{|x|^{1 - \delta}} \int_{B(e, |x|) }f(|t|) dt \Big|^r |x|^\alpha dx \Big)^{1/r}\\ &= \Big( \int_G \Big| \int_{B(e , 1)} f(|x| \cdot |t|) dt \Big|^r |x|^{\alpha + \delta r} dx \Big)^{1/r}. 
\end{align*}Now we apply the Minkowski integral inequality: 
\begin{align*}
    \|H_\delta f\|_{L^r_\alpha (G)} &\leq \int_{B(e , 1)} \Big( \int_{G} |f(|x| \cdot |t|) |^r |x|^{\alpha + \delta r } dx \Big)^{1/r} dt \\ & =  \Big( \int_{B(e,1)} |t|^{-\frac{\alpha + \delta r + 1}{r}} dt \Big) \Big( \int_{G} |f(|x|) |^r |x|^{\alpha + \delta r} dx \Big)^{1/r} \\ &= (1- \varkappa^{-1})\Big( \sum_{k=0}^\infty \varkappa^{(\frac{\alpha}{r} - \frac{1}{r'} + \delta)k}  \Big)\|f \|_{L^r_{\alpha + \delta r} (G)} \\ &= \frac{1- \varkappa^{-1}}{1- \varkappa^{\frac{\alpha}{r} -\frac{1}{r'} + \delta}} \|f\|_{L^r_{\alpha + \delta r} (G)}.
\end{align*}
The above shows that $$\|H_\delta\|_{\mathcal{L}(\Tilde{L}^r_{\alpha + \delta r} (G) ,\Tilde{L}^r_\alpha (G))} \leq \frac{1- \varkappa^{-1}}{1- \varkappa^{\frac{\alpha}{r} -\frac{1}{r'}}}.$$For the converse inequality we use a similar trick to the one used by Hardy to prove the sharpness of the constant $r/(r-1)$ in the original Hardy inequality, see \cite{10.2307/27642033}. Take a sequence $\{ g_n\}_{n \in \N_0}$ in $G$ such that $|g_n|=\varkappa^n$ and define $\varepsilon_n := \varkappa^{-n}$. Consider the functions $f_n$ given by \[f_n (x) :=\begin{cases} 0 & \esp \esp \text{if} \esp |x| < 1, \\ |x|^{-\frac{\alpha + \delta r + 1}{r} - \varepsilon_n}  & \esp \esp \text{if} \esp |x| \geq 1.\end{cases}\]Clearly $$\| f_n \|_{L^r_{\alpha + \delta r} (G)}^r = \frac{1-\varkappa^{-1}}{1 - \varkappa^{- \varepsilon_n r}},$$ and \[H_\delta f_n (x) =\begin{cases} 0 & \esp \esp \text{if} \esp |x| < 1 , \\ |x|^{-\frac{\alpha + 1}{r} - \varepsilon_n} \int_{\frac{1}{|x|} \leq |t| \leq 1} |t|^{-\frac{\alpha + \delta r  + 1}{r} - \varepsilon_n}  & \esp \esp \text{if} \esp |x| \geq 1.\end{cases}\]We can estimate \begin{align*}
    \| H_\delta f \|_{L^r_\alpha(G)} &= \Big(\int_{|x| \geq 1} \Big( |x|^{-\frac{\alpha +1}{r} - \varepsilon_n} \int_{\frac{1}{|x|} \leq |t| \leq 1} |t|^{-\frac{\alpha + \delta r +1}{r} - \varepsilon_n} dt\Big)^r |x|^\alpha dx \Big)^{1/r} \\ & \geq \Big(\int_{|x| \geq |g_n|} \Big( |x|^{-\frac{\alpha +1}{r} - \varepsilon_n} \int_{\frac{1}{|g_n|} \leq |t| \leq 1} |t|^{-\frac{\alpha + \delta r +1}{r} - \varepsilon_n} dt\Big)^r |x|^\alpha dx\Big)^{1/r} \\&= \Big( \int_{|x| \geq |g_n|} |x|^{-1 - r \varepsilon_n} dx \Big)^{1/r} \int_{\frac{1}{|g_n|} \leq |t| \leq 1} |t|^{-\frac{\alpha + \delta r +1}{r} - \varepsilon_n} dt \\ &= |g_n|^{-\varepsilon_n} \|f_n\|_{L^r_{\alpha + \delta r} (G)} \int_{\frac{1}{|g_n|} \leq |t| \leq 1} |t|^{-\frac{\alpha + \delta r +1}{r} - \varepsilon_n} dt,
\end{align*}which shows that $$\|H_\delta \|_{\mathcal{L} (L^r_{\alpha + \delta r} (G) , L^r_\alpha (G))} \geq  |g_n|^{-\varepsilon_n} \int_{\frac{1}{|g_n|} \leq |t| \leq 1} |t|^{-\frac{\alpha + \delta r +1}{r} - \varepsilon_n} dt.$$Finally, we take the limit as $n \to \infty$ to conclude $$\|H_\delta \|_{\mathcal{L} (L^r_{\alpha + \delta r} (G) , L^r_\alpha (G))} \geq  \int_{ 0< |t| \leq 1} |t|^{-\frac{\alpha + \delta r +1}{r} } dt = \frac{1- \varkappa^{-1}}{1- \varkappa^{\frac{\alpha}{r} -\frac{1}{r'} + \delta }}.$$This finish the proof.
\end{proof}
As the reader may notice the proof of the above result is elementary and it only requires H{\"o}lder inequality, Minkowski integral inequality and a change of variable. The reason why we estimate first the norm of the Hardy operator on radial functions is that in general we don't have a nice change of variable on Vilenkin groups. That's why we need to consider constant-order Vilenkin groups, because when the order of the quotients $|G_n/G_{n+1}|$ is constant we have Proposition \ref{prointegralradialfunctions}. If the group is not constant-order or if we consider non-radial functions the situation is more complicated. Anyway, for graded $\K$-Lie groups we can perform a change of variable that will allows to show that the norm of the Hardy operator acting only on radial functions is the same as when acting on the whole space $L^r_\alpha (G)$. This situation rise the natural question about whether or not the same holds true for more general classes of Vilenkin groups.  

\begin{rem}
In contrast with the real case, in the $p$-adic case the norm of the Hardy operator depends on the dimension of the group. See \cite{Zun} where the case $G= \Q_p^d$ is treated in detail. 
\end{rem}

\begin{rem}
We can easily check that for graded $\K$-Lie groups it holds $$|D_\lambda (S)|= | \lambda|_\K^{Q}  |S| , \esp \esp \int_{G} f(\lambda x) dx = |\lambda|_\K^{-Q} \int_G f(x)dx.$$ As a consequence of the above, we have on $G$ the following analogous of the polar decomposition: 
\begin{align*} 
    \int_{G} f(x) dx &= \sum_{k \in \Z} \int_{G_k \setminus G_{k+1}} f(x) dx \\ &=\sum_{n \in \Z} \int_{\mathscr{p}^k G_0 \setminus \mathscr{p}^k G_1} f(x)dx \\ &= \sum_{k \in \Z} q^{-k Q} \int_{G_0 \setminus G_1} f(\mathscr{p}^k x) dx.
\end{align*}
Also, we will use the notation $$ord_G(x) := - \log_p |x|_G  , \esp \esp x \in G \setminus \{ e \},$$and $\lambda (x) := \mathscr{p}^{ord_\mathscr{G} (x)} .$ In this way we can write $$\int_{|t|_\mathscr{G} \leq |x|_{\mathscr{G}}} f(t) dt = |\lambda (x)|_\K^Q \int_{| t|_{\mathscr{G}} \leq 1} f ( \lambda (x) t)  dt = |x|_G^Q \int_{| t|_{\mathscr{G}} \leq 1} f ( \lambda (x) t)  dt. $$
\end{rem}

\begin{proof}[Proof Corollary \ref{corostronghardyhomopadicLie}:]
Given a function $f$, we can associate to it a radial function $g$ such that $Hf = Hg$, just set $$g(x):=\frac{1}{1 - p^{-\emph{Q}}} \int_{|t| =1} f(\lambda (x) t) dt, \esp \esp x \in G.$$Then clearly $g$ is a radial function and $H_\delta g = H_\delta f$. Applying the H{\"o}lder inequality we get  \begin{align*}
    \|g\|_{L^r_{\alpha + \delta r} (G)} &= \Big( \int_G \Big|\frac{1}{1-p^{-\emph{Q}}} \int_{|t|=1} f(\lambda(x) t) dt \Big|^r |x|^{\alpha + \delta r} dx \Big)^{1/r} \\ & \leq \Big(\int_G \frac{1}{(1-p^{-\emph{Q}})^r} \Big( \int_{|t|=1} |f(\lambda(x) t)|^r dt\Big) \Big( \int_{|t|=1} dt \Big)^{r/r'} |x|^{\alpha + \delta r} dx \Big)^{1/r} \\ &= \Big( \int_G \frac{1}{1-p^{-\emph{Q}}} \Big(\int_{|t| =1} |f( \lambda(x) t)|^r dt \Big) |x|^{\alpha + \delta r} dx \Big)^{1/r} \\ & = \frac{1}{(1- p^{-\emph{Q}})^{1/r}} \Big( \int_G \int_{|x|  = |y| } |f(y)|^r   |x|^{\alpha + \delta r - 1} dy dx \Big)^{1/r} \\ &= \| f \|_{L^r_{\alpha + \delta r} (G)}.
\end{align*}Therefore we obtain $$\frac{\| H_\delta f\|_{L^r_{\alpha } (G)}}{\| f \|_{L^r_{\alpha + \delta r} (G)}} \leq \frac{\| H_\delta g\|_{L^r_\alpha (G)}}{\| g \|_{L^r_{\alpha + \delta r} (G)}},$$which implies the equality between the norm of the Hardy operator in $\mathcal{L} (L^r_{\alpha + \delta r } (G) , L^r_\alpha (G))$ and the norm of its restriction to radial functions. The sharp estimate on the norm of the Hardy operator on radial functions given in Theorem \ref{teosharpestimateradialfunct} conclude the proof. 
\end{proof}

Now we proceed with the proof of the sharp estimate on the Hardy-Littlewood-P{\'o}lya operator:

\begin{proof}[Proof of Theorem \ref{TeoHLPradialfunctions}:]
Let us take a radial function $f \in L^r_\alpha (G)$. First we need to perform a suitable change of variable and to apply the Minkowski integral inequality:

\begin{align*}
    \| T f\|_{L^r_\alpha (G)} & = \Big(\int_G \Big| \int_{G \setminus \{ e\}} \frac{f(|y|)}{\max\{|x| , |y| \}} dy \Big|^r |x|^\alpha dx \Big)^{1/r} \\ &= \Big(\int_G \Big| \int_{G \setminus \{ e\}} \frac{f(|x| \cdot |y|)}{\max\{1 , |y| \}} dy \Big|^r |x|^\alpha dx \Big)^{1/r} \\ & \leq \int_{G \setminus \{ e\}} \Big( \int_{G} \frac{|f(|x|\cdot|y|)|^r}{\max\{ 1, |y|^r\}} |x|^\alpha dx \Big)^{1/r} dy \\ &= \int_{G \setminus \{ e \}} \frac{1}{\max \{ 1 , |y| \} } \Big( \int_{G} |f(|x|\cdot|y|)|^r |x|^\alpha dx \Big)^{1/r} dy  \\ &= \int_{G \setminus \{ e \}} \frac{|y|^{-\frac{\alpha + 1}{r}}}{\max \{ 1 , |y| \} } dy \Big( \int_{G} |f(|x|)|^r |x|^\alpha dx \Big)^{1/r}. 
\end{align*}Now we just have to calculate \begin{align*}
    \int_{G \setminus \{ e \}} \frac{|y|^{-\frac{\alpha + 1}{r}}}{\max \{ 1 , |y| \} } dy & = \sum_{k=0}^\infty \varkappa^{-k(1-\frac{\alpha + 1}{r})} (1- \varkappa^{-1}) + \sum_{k=1}^\infty \varkappa^{-k(\frac{\alpha + 1}{r})} (1 - \varkappa^{-1}) \\ &=(1-\varkappa^{-1})\Big(\frac{1}{1-\varkappa^{\frac{\alpha}{r} - \frac{1}{r'}}} + \frac{\varkappa^{-\frac{\alpha + 1}{r}}}{1-\varkappa^{-\frac{\alpha+1}{r}}} \Big).
\end{align*}Summing up we get $$\| T\|_{\mathcal{L}(\Tilde{L}^r_\alpha (G))} \leq (1-\varkappa^{-1})\Big(\frac{1}{1-\varkappa^{\frac{\alpha}{r} - \frac{1}{r'}}} + \frac{\varkappa^{-\frac{\alpha + 1}{r}}}{1-\varkappa^{-\frac{\alpha+1}{r}}} \Big).$$
Now we want to show that the above estimate is sharp. For doing this we will use the same trick as in the proof of Theorem \ref{teosharpestimateradialfunct}. Take a sequence $\{ g_n\}_{n \in \N_0}$ in $G$ such that $|g_n|=\varkappa^n$ and define $\varepsilon_n := \varkappa^{-n}$. Consider the functions $f_n$ given by \[f_n (x) :=\begin{cases} 0 & \esp \esp \text{if} \esp |x| < 1, \\ |x|^{-\frac{\alpha + 1}{r} - \varepsilon_n}  & \esp \esp \text{if} \esp |x| \geq 1.\end{cases}\]Clearly again we have $$\| f_n \|_{L^r_\alpha (G)}^r = \frac{1-\varkappa^{-1}}{1 - \varkappa^{- \varepsilon_n r}},$$ and we can obtain $$T f_n (x) = \int_{|y| \geq 1} \frac{|y|^{- \frac{\alpha + 1}{r} - \varepsilon_n}}{\max\{|x| , |y|\}} dy .$$ Now we can estimate below the norm of $T f_n$ as follows: \begin{align*}
    \| T f_n \|_{L^r_\alpha (G)} &= \Big(\int_G \Big| \int_{|y| \geq 1} \frac{|y|^{- \frac{\alpha + 1}{r} - \varepsilon_n}}{\max\{|x| , |y|\}} dy \Big|^r |x|^\alpha dx \Big)^{1/r} \\ & \geq \Big(\int_{|x| \geq |g_n|} \Big| \int_{|t| \geq \frac{1}{|g_n|}} \frac{|t|^{- \frac{\alpha + 1}{r} - \varepsilon_n}}{\max\{|1 , |t|\}} dt \Big|^r |x|^{-1 - \varepsilon_n r} dx \Big)^{1/r} \\ & = \| f_n \|_{L^r_\alpha (G)} |g_n|^{- \varepsilon_n} \int_{|t| \geq \frac{1}{|g_n|}} \frac{|t|^{- \frac{\alpha + 1}{r} - \varepsilon_n}}{\max\{|1 , |t|\}} dt.
\end{align*}The above clearly implies $$\|T\|_{\mathcal{L}(\Tilde{L}^r_\alpha (G))} \geq |g_n|^{- \varepsilon_n} \int_{|t| \geq \frac{1}{|g_n|}} \frac{|t|^{- \frac{\alpha + 1}{r} - \varepsilon_n}}{\max\{|1 , |t|\}} dt.$$Taking the limit as $n \to \infty $we conclude the proof.
\end{proof}
In the same way as for the Hardy operator, for the graded $\K$-Lie groups we can show that the norm of the Hardy-Littlewood-P{\'o}lya acting on $L^r_\alpha (G)$ is the same as the norm of its restriction to radial functions. The trick for doing so is, once again, a simple change of variable that might be not possible in general for Vilenkin groups.

\begin{proof}[Proof Corollary \ref{coroHLPp-adicLiehomogeneousn}:]
First $$\|T\|_{\mathcal{L}(L^r_\alpha (G))} \geq \|T\|_{\mathcal{L}(\Tilde{L}^r_\alpha (G))} =  (1-p^{-\normalfont{\emph{Q}}})\Big(\frac{1}{1-p^{\normalfont{\emph{Q}}(\frac{\alpha}{r} - \frac{1}{r'})}} + \frac{p^{-\normalfont{\emph{Q}}\frac{\alpha + 1}{r}}}{1-p^{-\normalfont{\emph{Q}}\frac{\alpha+1}{r}}} \Big),$$as a consequence of Theorem \ref{TeoHLPradialfunctions}. For the converse inequality we have: \begin{align*}
    \| T f\|_{L^r_\alpha (G)} & = \Big(\int_G \Big| \int_{G \setminus \{ e\}} \frac{f(y)}{\max\{|x| , |y| \}} dy \Big|^r |x|^\alpha dx \Big)^{1/r} \\ &= \Big(\int_G \Big| \int_{G \setminus \{ e\}} \frac{f(\lambda(x) y)}{\max\{1 , |y| \}} dy \Big|^r |x|^\alpha dx \Big)^{1/r} \\ & \leq \int_{G \setminus \{ e\}} \Big( \int_{G} \frac{|f(\lambda(x) y)|^r}{\max\{ 1, |y|^r\}} |x|^\alpha dx \Big)^{1/r} dy \\ &= \int_{G \setminus \{ e \}} \frac{1}{\max \{ 1 , |y| \} } \Big( \int_{G} |f(\lambda(x) y)|^r |x|^\alpha dx \Big)^{1/r} dy  \\ &\leq  \int_{G \setminus \{ e \}} \frac{|y|^{-\frac{\alpha + 1}{r}}}{\max \{ 1 , |y| \} } dy \Big( \int_{G} |f(x)|^r |x|^\alpha dx \Big)^{1/r} \\ &=  (1-p^{-\normalfont{\emph{Q}}})\Big(\frac{1}{1-p^{\normalfont{\emph{Q}}(\frac{\alpha}{r} - \frac{1}{r'})}} + \frac{p^{-\normalfont{\emph{Q}}\frac{\alpha + 1}{r}}}{1-p^{-\normalfont{\emph{Q}}\frac{\alpha+1}{r}}} \Big) \| f \|_{L^r_\alpha (G)}.
\end{align*}This conclude the proof.
\end{proof}

\section{Weak Type Estimates}

\begin{proof}[Proof of Theorem \ref{teoweakestHardy}:]
Applying the H{\"o}lder inequality we get the estimate \begin{align*}
    | H_\delta f (x) | &= \Big| \frac{1}{|x|^{1 - \delta}} \int_{|y| \leq |x|} f(y) dy \Big| \\ & \leq |x|^{\delta - 1} \Big( \int_{|y| \leq |x|} |f(y)|^r |y|^\beta dy \Big)^{1/r} \Big( \int_{|y|\leq |x|} |y|^{- \beta r' / r} dy \Big)^{1/r'} \\ & \leq |x|^{\delta - 1} \| f \|_{L^r_\beta (G)} \Big( \int_{|y|\leq |x|} |y|^{- \beta r' / r} dy \Big)^{1/r'}.
\end{align*}Now a simple calculation yields to $$\Big( \int_{|y|\leq |x|} |y|^{- \beta r' / r} dy \Big)^{1/r'} = \Big( \frac{1 - \varkappa^{-1}}{1 - \varkappa^{\frac{\beta}{r-1} -1 }} \Big)^{1/r'} |x|^{\frac{1}{r'} - \frac{\beta}{r}},$$so, recalling the condition $\frac{\beta + 1}{r} - \delta = \frac{\gamma+1}{s}$, we obtain the inequality $$|H_\delta f (x)| \leq \Big( \frac{1 - \varkappa^{-1}}{1 - \varkappa^{\frac{\beta}{r-1} -1 }} \Big)^{1/r'} \| f \|_{L^r_\beta (G)} |x|^{- \frac{\gamma + 1}{s}}.$$For simplicity let us define now the constant $$C_f :=\Big( \frac{1 - \varkappa^{-1}}{1 - \varkappa^{\frac{\beta}{r-1} -1 }} \Big)^{1/r'} \| f \|_{L^r_\beta (G)} .$$Then for any $\lambda >0$ it holds $$A_\lambda := \{x \in G \esp : \esp |H_\delta f (x)| > \lambda \} \subset B_\lambda := \Big\{x \in G \esp: \esp |x| < \Big(\frac{C_f}{\lambda}\Big)^{\frac{s}{\gamma + 1}} \Big\},$$and by using the inclusion $A_\lambda \subset B_\lambda$ we can estimate \begin{align*}
    \| H_\delta f \|_{L^{s, \infty}_\gamma (G)} &= \sup_{\lambda >0} \lambda \cdot \Big( \int_{A_\lambda} |x|^\gamma dx \Big)^{1/s} \\ & \leq \sup_{\lambda >0} \lambda \cdot \Big( \int_{B_\lambda} |x|^\gamma dx \Big)^{1/s}.
\end{align*}Let us denote by $k_\lambda$ the unique integer number such that $$\varkappa^{k_\lambda} \leq \Big(\frac{C_f}{\lambda}\Big)^{\frac{s}{\gamma + 1}}< \varkappa^{k_\lambda +1}.$$Then \begin{align*}
    \int_{B_\lambda} |x|^\gamma dx & = (1 - \varkappa^{-1}) \sum_{k \leq k_\lambda} \varkappa^{k(\gamma + 1)} \\ &= (1 - \varkappa^{-1}) \varkappa^{k_\lambda (\gamma + 1)} \sum_{k \leq 0} \varkappa^{k(\gamma + 1)} \\ & \leq (1 - \varkappa^{-1}) \Big( \frac{C_f}{\lambda} \Big)^s \sum_{k \leq 0} \varkappa^{k(\gamma + 1)} \\ &= (1 - \varkappa^{-1}) \Big( \frac{C_f}{\lambda} \Big)^s \Big( \frac{1}{1 - \varkappa^{- (\gamma + 1)}} \Big),
\end{align*}so it follows \begin{align*}
    \|H_\delta f \|_{L^{s , \infty}_\gamma (G)} \leq \Big( \frac{1 - \varkappa^{-1}}{1 - \varkappa^{-(\gamma + 1)}} \Big)^{1/s} C_f = \Big( \frac{1 - \varkappa^{-1}}{1 - \varkappa^{-(\gamma + 1)}} \Big)^{1/s} \Big( \frac{1 - \varkappa^{-1}}{1 - \varkappa^{\frac{\beta}{r-1} -1 }} \Big)^{1/r'} \| f \|_{L^r_\beta (G)} .
\end{align*}This shows that $$\| H_\delta \|_{\mathcal{L}(L^r_\beta (G) , L^{s , \infty}_\gamma (G))} \leq \Big( \frac{1 - \varkappa^{-1}}{1 - \varkappa^{-(\gamma + 1)}} \Big)^{1/s} \Big( \frac{1 - \varkappa^{-1}}{1 - \varkappa^{\frac{\beta}{r-1} -1 }} \Big)^{1/r'}.$$Now, in order to show that the above estimate is sharp, let us define the function $f_0$ by $$f_0 (x) = |x|^{\frac{- \beta}{r-1}} \mathbb{1}_{G_0} (x).$$Hence by using the identities $r r' = r + r'$ and $(r-1)(r'-1) = 1$ we get \begin{align*}
    \| f_0 \|_{L^r_\beta (G)} &= \Big( \int_{G_0} |x|^{- \beta( \frac{r}{r-1} - 1)} dx \Big)^{1/r} \\&= \Big( \int_{G_0} |x|^{- \beta( r' - 1)} dx \Big)^{1/r} \\ &= \Big( \int_{G_0} |x|^{- \frac{\beta}{r-1}} dx \Big)^{1/r}   \\& = \Big( \frac{1 - \varkappa^{-1}}{1 - \varkappa^{\frac{\beta}{r-1} - 1}} \Big)^{1/r}
\end{align*} Let us apply now the operator $H_\delta$ to the function $f_0$. We obtain: \[H_\delta f_0 (x)= |x|^{\delta - 1} \begin{cases} \int_{|y| \leq |x|} |y|^{- \frac{\beta}{r-1}} dy & \esp \esp \text{if} \esp |x| \leq 1, \\ \int_{G_0} |y|^{-\frac{ \beta}{r-1}} dy  & \esp \esp \text{if} \esp |x| > 1 .\end{cases}\]By calculating the above integrals we obtain: \[H_\delta f_0 (x)= \frac{1 - \varkappa^{-1}}{1 - \varkappa^{\frac{\beta}{r-1} - 1}} \begin{cases} |x|^{-(\frac{\beta}{r-1} - \delta)} & \esp \esp \text{if} \esp |x| \leq 1, \\ |x|^{- (1 - \delta)} & \esp \esp \text{if} \esp |x| > 1 .\end{cases}\]Let us define now the constant $C_0$ by $$C_0 := \frac{1 - \varkappa^{-1}}{1 - \varkappa^{\frac{\beta}{r-1} - 1}} = \Big(\frac{1 - \varkappa^{-1}}{1 - \varkappa^{\beta - 1}}\Big)^{1/r'} \| f_0 \|_{L^r_\beta (G)},$$and for $\lambda>0$ let us define the sets $$A_\lambda^1:= \Big\{ x \in G_0 \esp : \esp |x|< \Big( \frac{C_0}{\lambda} \Big)^{\frac{1}{\frac{\beta}{r-1} - \delta}} \Big\}, \esp \esp \esp \esp A_\lambda^2 := \Big\{ x \in G \setminus G_0 \esp : \esp |x|< \Big( \frac{C_0}{\lambda} \Big)^{\frac{1}{1 - \delta}} \Big\}$$and $$A_\lambda := \{x \in G \esp : \esp |H_\delta f_0 (x)|> \lambda\} = A_\lambda^1 \cup A_\lambda^2.$$The weak $L^s_\gamma$-norm of $H_\delta f_0$ can be written as \begin{align*}
    \| H_\delta f_0 \|_{L^{s, \infty}_\gamma (G)} &= \sup_{\lambda >0} \lambda \cdot \Big( \int_{A_\lambda} |x|^\gamma dx \Big)^{1/s} \\ &= \max \Big\{ \sup_{C_0 > \lambda >0} \lambda \cdot \Big( \int_{A_\lambda} |x|^\gamma dx \Big)^{1/s}, \esp \sup_{\lambda \geq C_0} \lambda \cdot \Big( \int_{A_\lambda} |x|^\gamma dx \Big)^{1/s} \Big\},
\end{align*}and clearly $$A_\lambda = G_0 \cup A_\lambda^2 =\Big\{ x \in G  \esp : \esp |x|< \Big( \frac{C_0}{\lambda} \Big)^{\frac{1}{1 - \delta}} \Big\}, \esp \esp \text{when} \esp \esp 0< \lambda < C_0 ,$$ $$A_\lambda = A_\lambda^1 \esp \esp \text{when} \esp \esp C_0 \leq \lambda.$$In one hand \begin{align*}
\sup_{C_0 > \lambda >0} \lambda \cdot \Big( \int_{A_\lambda} |x|^\gamma dx \Big)^{1/s} & =\Big( \frac{1 - \varkappa^{-1}}{1 - \varkappa^{-(\gamma + 1)}} \Big)^{1/s} \sup_{0 < \lambda < C_0} \lambda \cdot \Big( \frac{C_0}{\lambda} \Big)^{\frac{\gamma + 1}{s(1 - \delta)}} \\ & \leq \Big( \frac{1 - \varkappa^{-1}}{1 - \varkappa^{-(\gamma + 1)}} \Big)^{1/s} C_0 \\ &= \Big( \frac{1 - \varkappa^{-1}}{1 - \varkappa^{-(\gamma + 1)}} \Big)^{1/s} \Big(\frac{1 - \varkappa^{-1}}{1 - \varkappa^{\frac{\beta}{r-1} - 1}}\Big)^{1/r'} \| f_0 \|_{L^r_\beta (G)},    
\end{align*}where the above inequality holds because $C_0 / \lambda >1$ and $(\gamma + 1) /s(1 - \delta) \leq 1$. On the other hand \begin{align*}
    \sup_{\lambda \geq C_0} \lambda \cdot \Big( \int_{A_\lambda} |x|^\gamma dx \Big)^{1/s} &= \sup_{\lambda \geq C_0} \lambda \cdot \Big( \int_{A_\lambda^1} |x|^\gamma dx \Big)^{1/s} \\ &= \Big( \frac{1 - \varkappa^{-1}}{1 - \varkappa^{-(\gamma + 1)}} \Big)^{1/s} \sup_{\lambda \geq C_0} \lambda \cdot  \Big( \frac{C_0}{\lambda} \Big)^{\frac{\gamma + 1}{s(\frac{\beta}{r-1} - \delta)}} \\ &=  \Big( \frac{1 - \varkappa^{-1}}{1 - \varkappa^{-(\gamma + 1)}} \Big)^{1/s} C_0 \\ &= \Big( \frac{1 - \varkappa^{-1}}{1 - \varkappa^{-(\gamma + 1)}} \Big)^{1/s} \Big(\frac{1 - \varkappa^{-1}}{1 - \varkappa^{\frac{\beta}{r-1} - 1}}\Big)^{1/r'} \| f_0 \|_{L^r_\beta (G)},
\end{align*}where the above holds because the conditions $\beta < r-1$ and $\frac{\beta + 1}{r} - \delta = \frac{\gamma + 1}{s}$ yield $\frac{\gamma + 1}{s}>\frac{\beta }{r-1} - \delta.$ In conclusion we obtain $$\| H_\delta \|_{\mathcal{L} (L^r_\beta (G), L^{s, \infty}_\gamma (G))} = \Big( \frac{1 - \varkappa^{-1}}{1 - \varkappa^{-(\gamma + 1)}} \Big)^{1/s} \Big(\frac{1 - \varkappa^{-1}}{1 - \varkappa^{\frac{\beta}{r-1} - 1}}\Big)^{1/r'}.$$
\end{proof}
With very similar arguments we can prove the case $r=1$:
\begin{proof}[Proof of Theorem \ref{teoweakHardyL1}:]
As before we can estimate \begin{align*}
    |H_\delta f (x) | & \leq |x|^{\delta - 1} \int_{|y| \leq |x|} |f(y)| dy \\ &= |x|^{\delta - 1} \int_{|y| \leq |x|} |f(y)| |y|^{-\beta} |y|^{\beta} dy \\ & \leq  |x|^{-(1- \delta - \beta )} \int_{|y| \leq |x|} |f(y)| |y|^{-\beta} dy \\ & \leq |x|^{-(1- \delta - \beta )} \| f \|_{L^1_\beta (G)}, 
\end{align*} so, defining $C_f :=\| f \|_{L^1_\beta (G)}$, we get $$A_\lambda := \{x \in G \esp : \esp |H_\delta f (x)| >\lambda \} \subset B_\lambda := \Big\{ x \in G \esp : \esp |x|< \Big( \frac{C_f}{\lambda} \Big)^{\frac{1}{1- \delta - \beta}} \Big\}.$$It follows for $\gamma + 1 = s(1 - \delta - \beta)$ \begin{align*}
    \| H_\delta f \|_{L^{s , \infty}_\gamma (G)} &= \sup_{\lambda>0} \lambda \cdot \Big(   \int_{A_\lambda} |x|^\gamma dx \Big)^{1/s} \\ & \leq \sup_{\lambda>0} \lambda \cdot \Big( \int_{B_\lambda} |x|^\gamma dx \Big)^{1/s} \\ &\leq \Big( \frac{1 - \varkappa^{-1}}{1 - \varkappa^{-(\gamma + 1)}} \Big)^{1/s} \| f \|_{L^1_\beta (G)} \\ &= \Big( \frac{1 - \varkappa^{-1}}{1 - \varkappa^{-s(1- \delta - \beta)}} \Big)^{1/s} \| f \|_{L^1_\beta (G)}.  
\end{align*}Now let us show that the above estimate is sharp when $\beta = 0$. For doing this define $f_0(x):= \mathbb{1}_{G_0} (x)$. Then $\| f_0 \|_{L^1(G)} =1$ and \[H_\delta f_0 (x)=  \begin{cases} |x|^{\delta } & \esp \esp \text{if} \esp |x| \leq 1, \\ |x|^{- (1 - \delta)} & \esp \esp \text{if} \esp |x| > 1 .\end{cases}\]Write again $$A_\lambda := \{x \in G \esp : \esp |H_\delta f_0 (x)|>\lambda\}.$$Then clearly $A_\lambda = \emptyset$ when $\lambda \geq 1,$ and for $0 < \lambda < 1$ $$A_\lambda = \{x \in G \esp : \esp \lambda^{\frac{1}{\delta}}< |x|< (1/\lambda)^{\frac{1}{1- \delta}} \}.$$Let us define now $k^1_\lambda, k^2_\lambda$ as the unique integer numbers such that $$\varkappa^{k_\lambda^1} \leq \lambda^{\frac{1}{\delta}} < \varkappa^{k_\lambda^1 + 1}, \esp \esp \text{and} \esp \esp \varkappa^{k_\lambda^2 } \leq (1/\lambda)^{\frac{1}{1- \delta}} < \varkappa^{k_\lambda^2 + 1}.$$In this way \begin{align*}
    \| H_\delta f_0 \|_{L^{s , \infty}_\gamma (G)} &= \sup_{0 < \lambda < 1} \lambda \cdot \Big( \int_{A_\lambda} |x|^\gamma dx \Big)^{1/s} \\ &= (1 - \varkappa^{-1})^{1/s} \sup_{0 < \lambda < 1} \lambda \cdot \Big( \sum_{k = k_\lambda^1 + 1}^{k_\lambda^2 } \varkappa^{k(\gamma + 1)} \Big)^{1/s} \\ &= (1 - \varkappa^{-1})^{1/s} \sup_{0 < \lambda < 1} \lambda \cdot \Big( \frac{\varkappa^{(k_\lambda^1 +1)(\gamma + 1)} - \varkappa^{(k_\lambda^2 + 1)(\gamma + 1)}}{1 - \varkappa^{\gamma + 1}}  \Big)^{1/s} \\ & = (1 - \varkappa^{-1})^{1/s} \sup_{0<\lambda < 1} \lambda \cdot \Big( \frac{   \lambda^{\frac{\gamma + 1}{ \delta - 1}} - \lambda^{\frac{\gamma + 1}{\delta}}}{1- \varkappa^{-(\gamma + 1)}} \Big)^{1/s} \\ & = \Big( \frac{1 - \varkappa^{-1}}{1 - \varkappa^{- (\gamma + 1)}}  \Big)^{1/s} \Big( \sup_{0 < \lambda < 1} 1 - \lambda^{\frac{\gamma + 1}{1-\delta} } \lambda^{\frac{\gamma + 1}{\delta}}\Big)^{1/s} = \Big( \frac{1 - \varkappa^{-1}}{1 - \varkappa^{- s(1 - \delta)}}  \Big)^{1/s}.
\end{align*}This concludes the proof.
\end{proof}

\section{The Adjoint Operator}
To conclude with the fist part of our work we include estimates for the adjoint Hardy operator analogous to the ones given for the Hardy operator. As the reader will notice the proof of the estimates for the adjoint operator will be in the same lines as the proof for the fractional Hardy operator.   

\begin{pro}\label{prochangeofvariable2}
Let $G$ be a constant-order Vilenkin group, let us say $|G_n / G_{n+1}| = \varkappa$ for all $n \in \Z$. Then for a radial function $f(|\cdot|)$ it holds: $$\frac{1}{|x|} \int_{|y|>|x|} f(|y|) dy = \int_{|y|>1} f(|x| \cdot |y|) dy .$$
\end{pro}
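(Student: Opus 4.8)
The plan is to imitate the direct computation behind Proposition \ref{prointegralradialfunctions}: expand both integrals as geometric series indexed by the level sets $\{|y| = \varkappa^k\}$ of the ultrametric, and observe that an index shift identifies the two series term by term.

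Concretely, I would fix $x \in G_m \setminus G_{m+1}$ so that $|x| = \varkappa^{-m}$, and recall from the normalization $|G_0| = 1$ together with $|G_n/G_{n+1}| = \varkappa$ that $|G_n| = \varkappa^{-n}$, hence $\int_{|y| = \varkappa^k} dy = |G_{-k}| - |G_{-k+1}| = \varkappa^k (1-\varkappa^{-1})$. The region $\{|y| > |x|\}$ is exactly $\{ y : |y| = \varkappa^k , \ k \geq -m+1 \}$, so on the left-hand side
$$\frac{1}{|x|}\int_{|y|>|x|} f(|y|)\,dy = \varkappa^m \sum_{k \geq -m+1} f(\varkappa^k)\,\varkappa^k(1-\varkappa^{-1}) = \sum_{j \geq 1} f(\varkappa^{-m}\varkappa^{j})\,\varkappa^{j}(1-\varkappa^{-1}),$$
after the substitution $j = k+m$. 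On the other hand, $\{|y|>1\} = \{ y : |y| = \varkappa^{j} , \ j \geq 1 \}$ and $|x|\cdot|y| = \varkappa^{-m}\varkappa^{j}$, so $\int_{|y|>1} f(|x|\cdot|y|)\,dy$ reduces to the very same series; comparing the two expressions finishes the argument.

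There is essentially no obstacle here, since the content is purely the bookkeeping of a geometric series in the spirit of Proposition \ref{prointegralradialfunctions}; the only two points requiring a modicum of care are (i) getting the summation range right, namely that $|y| > |x|$ translates into $k \geq -m+1$ rather than $k \geq -m$, and (ii) interpreting the identity correctly, since for a merely locally integrable radial $f$ the tail integral $\int_{|y|>|x|}$ need not converge: the equality should be read as an identity of extended non-negative reals when $f \geq 0$, and as an honest equality of real numbers whenever either side is finite, which is all that is needed in the subsequent estimates for $H_\delta^{*}$.
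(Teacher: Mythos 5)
Your computation is correct, and it is exactly the direct level-set calculation the paper intends: the paper states Proposition \ref{prochangeofvariable2} without proof precisely because it follows by the same index-shift of a series over the sets $\{|y|=\varkappa^k\}$ used to prove Proposition \ref{prointegralradialfunctions}, which is what you carry out (with the correct range $k\geq -m+1$ and the sensible convention for possibly infinite tails). Nothing further is needed.
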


\begin{proof}[Proof of Theorem \ref{teosharpstrongfractionaladjointHardyradial}:]
The proof is very similar to the proof of Theorem \ref{teosharpestimateradialfunct}. 
\begin{align*}
    \| H_\delta^* f \|_{L^r_\alpha (G)} &= \Big(\int_G \Big|  \int_{|t|> |x| } \frac{f(|t|)}{|t|^{1 - \delta}} dt \Big|^r |x|^\alpha dx \Big)^{1/r}\\ &= \Big( \int_G \Big| \int_{|t|>1} \frac{f(|x| \cdot |t|)}{|t|^{1 - \delta}} dt \Big|^r |x|^{\alpha + \delta r} dx \Big)^{1/r} \\ &\leq \int_{|t|> 1} |t|^{-(1 - \delta)} \Big( \int_{G} |f(|x| \cdot |t|) |^r |x|^{\alpha + \delta r } dx \Big)^{1/r} dt \\ & =  \Big( \int_{|t|>1} |t|^{-(1 - \delta) - \frac{\alpha + \delta r + 1}{r}} dt \Big) \Big( \int_{G} |f(|x|) |^r |x|^{\alpha + \delta r} dx \Big)^{1/r}\\ & =  \Big( \int_{|t|>1} |t|^{ -1 - \frac{\alpha +1 }{r}} dt \Big) \Big( \int_{G} |f(|x|) |^r |x|^{\alpha + \delta r} dx \Big)^{1/r} \\ &= (1- \varkappa^{-1})\Big( \sum_{k=1}^\infty \varkappa^{(- \frac{\alpha+1}{r} )k}  \Big)\|f \|_{L^r_{\alpha + \delta r} (G)} \\ &= \frac{1- \varkappa^{-1}}{1- \varkappa^{-\frac{\alpha + 1}{r}}} \varkappa^{- \frac{\alpha + 1}{r}}\|f\|_{L^r_{\alpha + \delta r} (G)}.
\end{align*}
The above shows that $$\|H_\delta^* \|_{\mathcal{L}(\Tilde{L}^r_{\alpha + \delta r} (G) ,\Tilde{L}^r_\alpha (G))} \leq \frac{1- \varkappa^{-1}}{1- \varkappa^{ - \frac{\alpha+1}{r}}}\varkappa^{- \frac{\alpha + 1}{r}} = \frac{1 - \varkappa^{-1}}{\varkappa^{\frac{\alpha + 1}{r}} - 1}.$$For the converse inequality take a sequence $\{ g_n\}_{n \in \N_0}$ in $G$ such that $|g_n|=\varkappa^n$ and define $\varepsilon_n := \varkappa^{-n}$. Consider the functions $f_n$ given by \[f_n (x) :=\begin{cases} |x|^{-\frac{\alpha + \delta r + 1}{r} + \varepsilon_n} & \esp \esp \text{if} \esp |x| < 1, \\ 0   & \esp \esp \text{if} \esp |x| \geq 1.\end{cases}\]Clearly $$\| f_n \|_{L^r_{\alpha + \delta r} (G)}^r = \frac{1-\varkappa^{-1}}{\varkappa^{ \varepsilon_n r} -1 },$$ and \[H_\delta f_n (x) =\begin{cases} 0 & \esp \esp \text{if} \esp |x| \geq 1 , \\ |x|^{-\frac{\alpha + 1}{r} + \varepsilon_n} \int_{1< |t| < \frac{1}{|x|}} |t|^{-1 -\frac{\alpha + 1}{r} + \varepsilon_n}  & \esp \esp \text{if} \esp |x| < 1.\end{cases}\]We can estimate \begin{align*}
    \| H_\delta f \|_{L^r_\alpha(G)} &= \Big(\int_{|x| <  1} \Big( |x|^{-\frac{\alpha +1}{r} + \varepsilon_n} \int_{1 < |t| < \frac{1}{|x|}} |t|^{-1-\frac{\alpha+1}{r} + \varepsilon_n} dt\Big)^r |x|^\alpha dx \Big)^{1/r} \\ & \geq \Big(\int_{|x| < |g_n|^{-1}} \Big( |x|^{-\frac{\alpha +1}{r} + \varepsilon_n} \int_{1 < |t| < |g_n|} |t|^{-1-\frac{\alpha+1}{r} - \varepsilon_n} dt\Big)^r |x|^\alpha dx\Big)^{1/r} \\&= \Big( \int_{|x| < |g_n|^{-1}} |x|^{-1 + r \varepsilon_n} dx \Big)^{1/r} \int_{1< |t| < |g_n|} |t|^{-1-\frac{\alpha  +1}{r} + \varepsilon_n} dt \\ &= |g_n|^{-\varepsilon_n} \|f_n\|_{L^r_{\alpha + \delta r} (G)} \int_{1< |t| <|g_n|} |t|^{-1-\frac{\alpha  +1}{r} + \varepsilon_n} dt,
\end{align*}which shows that $$\|H_\delta \|_{\mathcal{L} (L^r_{\alpha + \delta r} (G) , L^r_\alpha (G))} \geq  |g_n|^{-\varepsilon_n} \int_{1< |t| < |g_n|} |t|^{-1-\frac{\alpha +1}{r} + \varepsilon_n} dt.$$Finally, we take the limit as $n \to \infty$ to conclude $$\|H_\delta \|_{\mathcal{L} (L^r_{\alpha + \delta r} (G) , L^r_\alpha (G))} \geq  \int_{ 1< |t| } |t|^{-1-\frac{\alpha +1}{r} } dt = \frac{1- \varkappa^{-1}}{ \varkappa^{\frac{\alpha+1}{r} } - 1}.$$This conclude the proof.
\end{proof}

Now we present the following weak estimates for the adjoint fractional Hardy operator:

\begin{teo}\label{teoweakestadjointHardy}
Let $G$ be a constant-order Vilenkin group, let us say $|G_n/G_{n+1}|=\varkappa$ for all $n \in \Z$. Let $1<r< \infty,$ $1 \leq s < \infty,$ $ - \infty < \delta < 1$ and $\delta r - 1 < \beta < \infty$ be real numbers, and take $\gamma \in \R$ such that $\frac{\beta + 1}{r} - \delta = \frac{\gamma+1}{s}$. Then $$\| H_\delta^* \|_{\mathcal{L}(L^r_\beta (G), L^{s,\infty}_\gamma (G))} = \Big( \frac{1 - \varkappa^{-1}}{1- \varkappa^{-(\gamma + 1)}} \Big)^{1/s} \Big( \frac{1 - \varkappa^{-1}}{\varkappa^{(\frac{\beta + 1}{r } - \delta ) r' } -1}  \Big)^{1/r'},$$where $\frac{1}{r} + \frac{1}{r'} =1.$
\end{teo}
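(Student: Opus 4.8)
The plan is to mirror, step by step, the proof of Theorem~\ref{teoweakestHardy}, replacing the Hardy operator by its adjoint and the ball $B(e,|x|)$ by its complement --- i.e. combining the weak-type machinery of Theorem~\ref{teoweakestHardy} with the adjoint manipulations already used for the strong estimate in Theorem~\ref{teosharpstrongfractionaladjointHardyradial}. There are two ingredients: a pointwise majorant for $H_\delta^* f$ obtained from H\"older's inequality plus a geometric-series evaluation, and an explicit extremal function.

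For the upper bound I would apply H\"older's inequality with exponents $r,r'$ to
$$H_\delta^* f(x)=\int_{|y|>|x|}\big(f(y)|y|^{\beta/r}\big)\,|y|^{-\beta/r-(1-\delta)}\,dy,$$
getting $|H_\delta^* f(x)|\le \|f\|_{L^r_\beta(G)}\big(\int_{|y|>|x|}|y|^{-a}\,dy\big)^{1/r'}$ with $a:=r'\big(\frac{\beta}{r}+1-\delta\big)$. The hypothesis $\delta r-1<\beta$ is precisely the condition $a>1$, which makes the integral over $\{|y|>|x|\}$ a convergent geometric series; evaluating it via the ultrametric polar decomposition on a constant-order Vilenkin group, and using the elementary identity $a-1=\big(\frac{\beta+1}{r}-\delta\big)r'$ (a consequence of $1-\frac1r=\frac1{r'}$), one arrives at
$$|H_\delta^* f(x)|\le \Big(\frac{1-\varkappa^{-1}}{\varkappa^{(\frac{\beta+1}{r}-\delta)r'}-1}\Big)^{1/r'}\|f\|_{L^r_\beta(G)}\,|x|^{-\frac{\gamma+1}{s}},$$
where $\frac{\gamma+1}{s}=\frac{\beta+1}{r}-\delta>0$, again by the hypothesis. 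Writing $C_f$ for the constant on the right, the superlevel set $A_\lambda:=\{|H_\delta^* f|>\lambda\}$ is then contained in the ball $\{|x|<(C_f/\lambda)^{s/(\gamma+1)}\}$, and estimating $\int_{\{|x|<R\}}|x|^\gamma\,dx\le \frac{1-\varkappa^{-1}}{1-\varkappa^{-(\gamma+1)}}R^{\gamma+1}$ by another geometric sum (convergent since $\gamma+1>0$) gives $\lambda\big(\int_{A_\lambda}|x|^\gamma dx\big)^{1/s}\le \big(\frac{1-\varkappa^{-1}}{1-\varkappa^{-(\gamma+1)}}\big)^{1/s}C_f$, which is the claimed upper bound for the operator norm.

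For sharpness I would test against the fixed function $f_0(x)=|x|^{-r'(\beta+1-\delta)/r}\mathbb{1}_{\{|x|>1\}}(x)$; note that it is supported \emph{outside} $G_0$, reflecting the fact that $H_\delta^*$ averages over $\{|y|>|x|\}$, and that it lies in $L^r_\beta(G)$ precisely because $\delta r-1<\beta$. A short computation gives $|f_0|^r|x|^\beta=|x|^{-a}$, so $\|f_0\|_{L^r_\beta(G)}^r$ equals the constant $C_0:=\frac{1-\varkappa^{-1}}{\varkappa^{a-1}-1}$, and $H_\delta^* f_0$ equals $C_0$ on $\{|x|\le 1\}$ and $C_0|x|^{-(a-1)}$ on $\{|x|>1\}$, so it is radially non-increasing. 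Hence $A_\lambda=\{|x|<(C_0/\lambda)^{1/(a-1)}\}$ for $0<\lambda<C_0$ and $A_\lambda=\emptyset$ otherwise; using $(\gamma+1)/(a-1)=s/r'$ one checks that $\lambda\big(\int_{A_\lambda}|x|^\gamma dx\big)^{1/s}$ is non-decreasing in $\lambda$ on the relevant range and that its supremum is attained in the limit $\lambda\to C_0^-$, when the ball shrinks to $G_0$, equalling $\big(\frac{1-\varkappa^{-1}}{1-\varkappa^{-(\gamma+1)}}\big)^{1/s}C_0$. Dividing $\|H_\delta^* f_0\|_{L^{s,\infty}_\gamma(G)}$ by $\|f_0\|_{L^r_\beta(G)}=C_0^{1/r}$ and rewriting $C_0^{1/r'}$ reproduces exactly the asserted constant, so (as in the proof of Theorem~\ref{teoweakestHardy}) no limiting family $\{f_n\}$ is required.

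I expect the main obstacle to be purely organisational: the two elementary identities $a-1=\big(\frac{\beta+1}{r}-\delta\big)r'$ and $(\gamma+1)/(a-1)=s/r'$ are what make the constants coming from the two different geometric series assemble into the stated product, and one must also pick the support $\{|x|>1\}$ for $f_0$ correctly and verify that the supremum defining its weak norm is genuinely realised as $\lambda\to C_0^-$ rather than at some interior value. Beyond that, everything is routine bookkeeping with the polar decomposition on constant-order Vilenkin groups.
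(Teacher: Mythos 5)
Your argument is correct and follows essentially the same route as the paper's proof: the same H\"older estimate producing the pointwise majorant $C_f\,|x|^{-(\gamma+1)/s}$ with the constant $\big(\tfrac{1-\varkappa^{-1}}{\varkappa^{(\frac{\beta+1}{r}-\delta)r'}-1}\big)^{1/r'}\|f\|_{L^r_\beta(G)}$, the same level-set inclusion for the weak upper bound, and the same extremal function, since $-r'(\beta+1-\delta)/r=(\delta-1-\beta)/(r-1)$ and $\{|x|>1\}=G\setminus G_0$, so your $f_0$ coincides with the paper's $f_0(x)=|x|^{\frac{\delta-1-\beta}{r-1}}\mathbb{1}_{G\setminus G_0}(x)$. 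The only cosmetic point is that $\lambda\mapsto\lambda\big(\int_{A_\lambda}|x|^\gamma\,dx\big)^{1/s}$ is not literally non-decreasing (it jumps down whenever the ball loses a shell), but your bound via $(\gamma+1)/(a-1)=s/r'$ does show its supremum is the limit as $\lambda\to C_0^{-}$, so the constant is attained exactly as you claim; in fact your sharpness computation is more detailed than the paper's, which merely asserts the value of $\|H_\delta^* f_0\|_{L^{s,\infty}_\gamma(G)}$.
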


Notice that in the case where $r=s$ and $\delta = 0$ the value of $\gamma$ has to be equal to $\beta$. It follows: 
\begin{coro}
Let $G$ be a constant-order Vilenkin group, let us say $|G_n / G_{n+1}| = \varkappa$ for all $n \in \Z$. Let $1<r< \infty$ and $-1 < \beta < \infty$ be given real numbers. Then: $$\| H^* \|_{\mathcal{L}(L^r_\beta (G), L^{r,\infty}_\beta (G))} = \Big( \frac{1 - \varkappa^{-1}}{1- \varkappa^{-(\beta + 1)}} \Big)^{1/r} \Big( \frac{1 - \varkappa^{-1}}{\varkappa^{\frac{\beta + 1}{r-1} } - 1}  \Big)^{1/r'},$$where $\frac{1}{r} + \frac{1}{r'} =1.$ 
\end{coro}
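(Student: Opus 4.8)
The plan is to read off this corollary from Theorem \ref{teoweakestadjointHardy} by specialising its parameters to $s=r$ and $\delta=0$, in exact parallel with the way the weak $L^{r}\to L^{r,\infty}$ bound for the Hardy operator was extracted from Theorem \ref{teoweakestHardy}. Concretely, I would first check that the hypotheses of Theorem \ref{teoweakestadjointHardy} reduce to the ones assumed here: with $\delta=0$ the condition $-\infty<\delta<1$ is automatic, the requirement $1\le s<\infty$ holds for $s=r$, and the range restriction $\delta r-1<\beta<\infty$ becomes precisely $-1<\beta<\infty$. Moreover $H^{*}=H_{0}^{*}$ by Definition \ref{defihardyoperator}, so the left-hand side of the claimed identity is literally the operator norm considered in Theorem \ref{teoweakestadjointHardy} for these parameters.

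Next I would note that the balance relation $\frac{\beta+1}{r}-\delta=\frac{\gamma+1}{s}$ of the parent theorem becomes, for $s=r$ and $\delta=0$, the identity $\frac{\beta+1}{r}=\frac{\gamma+1}{r}$, which forces $\gamma=\beta$ with no further freedom; hence the target space is indeed $L^{r,\infty}_{\beta}(G)$. It then only remains to substitute $\gamma=\beta$, $\delta=0$, $s=r$ into the closed-form constant supplied by Theorem \ref{teoweakestadjointHardy}. The first factor becomes $\bigl((1-\varkappa^{-1})/(1-\varkappa^{-(\beta+1)})\bigr)^{1/r}$, and in the second factor the exponent simplifies via $\tfrac1r+\tfrac1{r'}=1$, i.e. $r'/r=1/(r-1)$, to $\bigl(\tfrac{\beta+1}{r}-\delta\bigr)r'=\tfrac{(\beta+1)r'}{r}=\tfrac{\beta+1}{r-1}$, giving $\bigl((1-\varkappa^{-1})/(\varkappa^{(\beta+1)/(r-1)}-1)\bigr)^{1/r'}$; the product of the two is exactly the asserted value of $\|H^{*}\|_{\mathcal{L}(L^{r}_{\beta}(G),L^{r,\infty}_{\beta}(G))}$.

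There is no genuine obstacle here: all the analysis is already packaged inside Theorem \ref{teoweakestadjointHardy}, and the only care needed is the bookkeeping of exponents and the verification that fixing $\gamma=\beta$ does not clash with any hidden admissibility condition of the parent statement. If a self-contained argument were preferred, one could simply rerun the scheme of Theorem \ref{teoweakestHardy} in this particular case: apply H\"older's inequality inside $H^{*}f(x)=\int_{|y|>|x|}f(y)|y|^{-1}\,dy$ to obtain the pointwise bound $|H^{*}f(x)|\le C_{f}|x|^{-(\beta+1)/r}$ with $C_{f}=\bigl((1-\varkappa^{-1})/(\varkappa^{(\beta+1)/(r-1)}-1)\bigr)^{1/r'}\|f\|_{L^{r}_{\beta}(G)}$ (the geometric series converging precisely because $\beta>-1$), pass to the super-level sets $A_{\lambda}=\{|H^{*}f|>\lambda\}\subseteq\{|x|<(C_{f}/\lambda)^{r/(\beta+1)}\}$ to read off the weak bound with the stated constant, and test sharpness on the single extremiser $f_{0}(x)=|x|^{-(\beta+1)/(r-1)}\mathbb{1}_{G\setminus G_{0}}(x)$ --- the $H^{*}$-analogue of the function $|x|^{-\beta/(r-1)}\mathbb{1}_{G_{0}}(x)$ used in the proof of Theorem \ref{teoweakestHardy} --- for which H\"older is an equality on $\{|y|>|x|\}$ whenever $|x|\ge1$; the resulting super-level-set computation, identical in structure to the one in the proof of Theorem \ref{teoweakestHardy}, then returns exactly the predicted value.
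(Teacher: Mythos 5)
Your proposal is correct and follows essentially the same route as the paper: the corollary is obtained there exactly by specialising Theorem \ref{teoweakestadjointHardy} to $s=r$, $\delta=0$, noting that the balance condition forces $\gamma=\beta$, and simplifying the exponent $\bigl(\tfrac{\beta+1}{r}\bigr)r'=\tfrac{\beta+1}{r-1}$ in the stated constant. Your parameter bookkeeping (including the reduction of $\delta r-1<\beta$ to $-1<\beta$) matches the paper's derivation, so nothing further is needed.
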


In the special case  $r=1$ we got a similar estimation: 

\begin{teo}\label{teoweakadjointHardyL1}
Let $G$ be a constant-order Vilenkin group, let us say $|G_n/G_{n+1}|=\varkappa$ for all $n \in \Z$. Let $1 \leq s< \infty,$ $- \infty < \delta < 1$ and $- ( 1 - \delta) < \beta < \infty$ be real numbers, and take $\gamma \in \R$ such that $1 - \delta + \beta = \frac{\gamma+1}{s}$. Then $$\| H_\delta^* \|_{\mathcal{L}(L^1_{\beta} (G), L^{s,\infty}_\gamma (G))} \leq \Big( \frac{1 - \varkappa^{-1}}{1- \varkappa^{-(\gamma + 1)}} \Big)^{1/s}= \Big( \frac{1 - \varkappa^{-1}}{1- \varkappa^{-s(1 - \delta + \beta)}} \Big)^{1/s} .$$
\end{teo}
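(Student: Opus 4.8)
The plan is to follow almost verbatim the argument used for Theorem \ref{teoweakHardyL1}, adapting it to the adjoint operator $H_\delta^*$; since only an upper bound is asserted (and no sharp extremiser is required), the argument is in fact shorter. First I would establish a pointwise bound. For $f\in L^1_\beta(G)$ and $x\in G\setminus\{e\}$, splitting $|y|^{\delta-1}=|y|^{\beta}\cdot|y|^{\delta-1-\beta}$ inside the defining integral gives
\[ |H_\delta^* f(x)| \;\leq\; \int_{|y|>|x|} |f(y)|\,|y|^{\beta}\,|y|^{\delta-1-\beta}\,dy . \]
The hypothesis $-(1-\delta)<\beta$ is exactly the statement that the exponent $\delta-1-\beta$ is strictly negative, so on the region $|y|\geq|x|$ we have $|y|^{\delta-1-\beta}\leq|x|^{\delta-1-\beta}$; pulling this factor out and bounding what remains by $\|f\|_{L^1_\beta(G)}$ yields
\[ |H_\delta^* f(x)| \;\leq\; |x|^{-(1-\delta+\beta)}\,\|f\|_{L^1_\beta(G)}, \]
which in particular shows $H_\delta^* f$ is finite a.e. Writing $C_f:=\|f\|_{L^1_\beta(G)}$, the super-level set $A_\lambda:=\{x\in G:|H_\delta^* f(x)|>\lambda\}$ is then contained in the ball $B_\lambda:=\{x\in G:|x|<(C_f/\lambda)^{1/(1-\delta+\beta)}\}$, well defined because $1-\delta+\beta>0$.

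Next I would estimate the weighted measure of $B_\lambda$. Using $\tfrac{\gamma+1}{s}=1-\delta+\beta$ — which also forces $\gamma+1>0$, so the geometric series below converges — rewrite $B_\lambda=\{|x|<(C_f/\lambda)^{s/(\gamma+1)}\}$, let $k_\lambda$ be the unique integer with $\varkappa^{k_\lambda}\leq(C_f/\lambda)^{s/(\gamma+1)}<\varkappa^{k_\lambda+1}$, and note that since $|\cdot|$ only takes powers of $\varkappa$ we get $B_\lambda\subseteq\{|x|\leq\varkappa^{k_\lambda}\}$. Summing over the spheres $\{|x|=\varkappa^{-n}\}$, each of measure $\varkappa^{-n}(1-\varkappa^{-1})$, gives
\[ \int_{B_\lambda} |x|^{\gamma}\,dx \;\leq\; (1-\varkappa^{-1})\sum_{n\geq -k_\lambda}\varkappa^{-n(\gamma+1)} \;=\; \frac{1-\varkappa^{-1}}{1-\varkappa^{-(\gamma+1)}}\,\varkappa^{k_\lambda(\gamma+1)} \;\leq\; \frac{1-\varkappa^{-1}}{1-\varkappa^{-(\gamma+1)}}\Big(\frac{C_f}{\lambda}\Big)^{s}. \]

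Finally I would assemble the pieces: from $A_\lambda\subseteq B_\lambda$,
\[ \lambda\cdot\Big(\int_{A_\lambda}|x|^{\gamma}\,dx\Big)^{1/s} \;\leq\; \lambda\,\Big(\frac{1-\varkappa^{-1}}{1-\varkappa^{-(\gamma+1)}}\Big)^{1/s}\frac{C_f}{\lambda} \;=\; \Big(\frac{1-\varkappa^{-1}}{1-\varkappa^{-(\gamma+1)}}\Big)^{1/s}\|f\|_{L^1_\beta(G)}, \]
and taking the supremum over $\lambda>0$ gives the claimed estimate, the second expression following by substituting $\gamma+1=s(1-\delta+\beta)$. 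I do not expect a genuine obstacle here, since this is structurally the same computation as in Theorems \ref{teoweakHardyL1} and \ref{teoweakestadjointHardy}; the only points that require care are verifying that the two sign conditions — $\delta-1-\beta<0$ for the pointwise bound and $\gamma+1>0$ for convergence of the series — are precisely what the hypotheses $-(1-\delta)<\beta$ and $\tfrac{\gamma+1}{s}=1-\delta+\beta$ provide, and (should one wish to promote the inequality to an equality, as in the $\beta=0$ case of Theorem \ref{teoweakHardyL1}) exhibiting the extremising family built from $\mathbb{1}_{G_0}$, which is not needed for the statement as written.
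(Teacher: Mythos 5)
Your argument is correct: the pointwise bound $|H_\delta^* f(x)|\leq |x|^{-(1-\delta+\beta)}\|f\|_{L^1_\beta(G)}$, the inclusion of the super-level set in a ball, and the geometric-series computation with $\gamma+1=s(1-\delta+\beta)>0$ give exactly the claimed constant. This is essentially the same scheme the paper uses for Theorems \ref{teoweakHardyL1} and \ref{teoweakestadjointHardy} (the paper leaves the proof of this particular statement implicit), so no genuinely different route is involved.
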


\begin{proof}[Proof of Theorem \ref{teoweakestadjointHardy}:]
By application of the H{\"o}lder inequality we obtain: \begin{align*}
    |H_\delta^* f(x) | &\leq \int_{|y|>|x|} \frac{|f(y)|}{|y|^{1 - \delta}} dy \\ & \leq \Big( \int_{|y|>|x|} |y|^{-(1 - \delta + \frac{\beta}{r})r'} dy \Big)^{1/r'} \Big( \int_{|y|>|x|} |f(y)|^r |y|^\beta \Big)^{1/r} \\ & \leq \Big( \int_{|y|>|x|} |y|^{-(1 - \delta + \frac{\beta}{r})r'} dy \Big)^{1/r'} \| f \|_{L^r_\beta (G)} \\ &= | x |^{- (\frac{\beta+1}{r} - \delta )} \Big( \frac{1 - \varkappa^{-1}}{\varkappa^{(\frac{\beta + 1}{r} - \delta)r' } -1} \Big)^{1/r'} \| f \|_{L^r_\beta (G)}\\ &= C_f | x |^{- (\frac{\gamma + 1}{s} )},
\end{align*}where $$C_f := \Big( \frac{1 - \varkappa^{-1}}{\varkappa^{(\frac{\beta + 1}{r} - \delta)r'} -1} \Big)^{1/r'} \| f \|_{L^r_\beta (G)}.$$Now as before we define the sets $$A_\lambda := \{x \in G \esp : \esp |H_\delta^* f (x)| > \lambda\} \subset B_\lambda := \Big\{ x \in G \esp : \esp |x|< \Big( \frac{C_f}{\lambda} \Big)^{\frac{s}{\gamma + 1}} \Big\}.$$Hence \begin{align*}
    \| H_\delta^* f \|_{L^{s , \infty}_\gamma (G)} &= \sup_{\lambda>0} \lambda \cdot \Big( \int_{A_\lambda} |x|^\gamma dx  \Big)^{1/s} \\ & \leq  \sup_{\lambda>0} \lambda \cdot \Big( \int_{B_\lambda} |x|^\gamma dx  \Big)^{1/s} \\ &= \Big( \frac{1 - \varkappa^{-1}}{1 - \varkappa^{- (\gamma + 1)}} \Big)^{1/s} \Big( \frac{1 - \varkappa^{-1}}{\varkappa^{(\frac{\beta + 1}{r} - \delta)r'} - 1} \Big)^{1/r'} \| f \|_{L^r_\beta (G)}.  
\end{align*}In order to show the above estimate is sharp just define $$f_0 (x) := |x|^{\frac{\delta -1 - \beta}{r-1}} \mathbb{1}_{G \setminus G_0} (x) , $$and notice that $$\| H_\delta^* f_0 \|_{L^{s , \infty}_\gamma (G)} = \Big( \frac{1 - \varkappa^{-1}}{1 - \varkappa^{- (\gamma + 1)}} \Big)^{1/s} \Big( \frac{1 - \varkappa^{-1}}{\varkappa^{(\frac{\beta + 1}{r} - \delta)r'} - 1} \Big)^{1/r'} \| f_0 \|_{L^r_\beta (G)}.$$This conclude the proof.
\end{proof}

\section{Graded $\K$-Lie Groups}

Now we proceed with the second part of our results. The first one is Theorem \ref{teofirstintegralinequalitycompact} which together with Theorem \ref{Teofirstintegralineqnoncompact} give an analogue of Theorem 3.1 in \cite{2018arXiv180501064R}.

\begin{proof}[Proof of Theorem \ref{teofirstintegralinequalitycompact}]
Let us show first that equation (\ref{eq2}) implies inequality (\ref{eq1}). 

\begin{align*}
    \int_{G} \Big( \int_{B(e, |x|_G)} f(z) dz \Big)^s \varphi (x) dx &= \sum_{m \geq 0 } \int_{G_m \setminus G_{m+1} } \Big( \int_{B(e ,q^{-m})} f(z) dz  \Big)^s \varphi (x) dx \\ &= \sum_{m \geq 0 } \int_{G_m \setminus G_{m+1} } \Big( \sum_{k \geq  m } \int_{G_k \setminus G_{k+1} } f(z) dz  \Big)^s \varphi (x) dx.
\end{align*}
Now we define: 
\begin{itemize}
    \item \begin{align*}
        g(m) &:=\Big( \int_{G_m} \psi (x)^{1-r'} dx \Big)^{1/r r'} = \Big( \sum_{l \geq m} q^{-lQ} \int_{G_0 \setminus G_1} \psi ( q^l x)^{1 -r'} dx  \Big)^{1/r r'}  \\ &= \Big( \sum_{l \geq m} q^{-l(Q + \beta(1-r'))} \int_{G_0 \setminus G_1} \psi (x) dx \Big)^{1/rr'} \\&= \Big( \frac{1}{1 - q^{-(Q + \beta(1-r'))}} \int_{G_0 \setminus G_1} \psi (x)^{1 -r'} dx \Big)^{1/r r'} q^{-m \frac{Q + \beta (1-r')}{rr'}} \\ &= C_1 q^{-m \frac{Q + \beta (1-r')}{rr'}} ; \esp \esp \esp   C_1 :=  \Big( \frac{1}{1 - q^{-(Q + \beta(1-r'))}} \int_{G_0 \setminus G_1} \psi (x)^{1 -r'} dx \Big)^{1/r r'}.
    \end{align*}
    \item \begin{align*}
        U(m) := \int_{G_0 \setminus G_1} q^{-Qm} [f(\mathscr{p}^m x) \psi (\mathscr{p}^m x )^{1/r} g(m)]^r dx = \Big( \int_{G_m \setminus G_{m +1}} f(x)^r \psi (x ) dx\Big) g(m)^r ,
    \end{align*}
    \item \begin{align*}
        V(m) &:= \sum_{l \geq m} q^{-lQ} \int_{G_0 \setminus G_1} [\psi (\mathscr{p}^m x)^{1/r} g(l)]^{-r'} dx \\ &= \sum_{l \geq m } q^{-l(Q- \beta \frac{r'}{r} )} \int_{G_0 \setminus G_1} \psi(x)^{1-r'} \big( C_1 q^{-l \frac{Q + \beta (1-r')}{rr'}} \big)^{-r'} dx  \\&=  \sum_{l \geq m } C_1^{-r'} q^{-l (\frac{Q}{r'} - \frac{\beta}{r})} \int_{G_0 \setminus G_1} \psi(x)^{1 - r'} dx \\ &= \Big( \frac{C_1^{-r'}}{1- q^{- (\frac{Q}{r'} - \frac{\beta}{r})}} \int_{G_0 \setminus G_1} \psi(x)^{1-r'} dx \Big)  q^{-m(\frac{Q}{r'} - \frac{\beta}{r})} \\ &= C_2 q^{-m(\frac{Q}{r'} - \frac{\beta}{r})}; \esp \esp \esp C_2 := \frac{C_1^{-r'}}{1- q^{- (\frac{Q}{r'} - \frac{\beta}{r})}} \int_{G_0 \setminus G_1} \psi(x)^{1-r'} dx. 
    \end{align*}
    \item \begin{align*}
        W(m) &:=\int_{G_0 \setminus G_1} q^{-mQ} \varphi (\mathscr{p}^m x) dx \\& = \int_{G_m \setminus G_{m+1}} \varphi_\alpha (x) dx = q^{m (\alpha - Q)} \int_{G_0 \setminus G_1} \varphi (x) dx  = C_3 q^{m(\alpha - Q)}.
    \end{align*}
\end{itemize} 
By using H{\"o}lder inequality 
\begin{align*}
    \sum_{k \geq m} &\int_{G_0 \setminus G_1} q^{-kQ} f(\mathscr{p}^k z) dz = \sum_{k \geq m} \int_{G_0 \setminus G_1} q^{-kQ/r} [f(\mathscr{p}^k z) \psi (\mathscr{p}^k z)^{1/r} g(k) ] q^{-kQ/r'} [ \psi (\mathscr{p}^k z)^{1/r} g(k)]^{-1} dz \\ & \leq \Big( \sum_{k \geq m} \int_{G_0 \setminus G_1} q^{-kQ} [f(\mathscr{p}^k z) \psi (\mathscr{p}^k z)^{1/r} g(k) ]^r dz \Big)^{1/r} \Big( \sum_{k \geq m} q^{-kQ}
    \int_{G_0 \setminus G_1} [ \psi (\mathscr{p}^k z)^{1/r} g(k)]^{-r'} dz \Big)^{1/r'} \\ &= \Big( \sum_{k \geq m} U(k) \Big)^{1/r} V(m)^{1/r'}. 
\end{align*}
In this way we can write 
$$\int_{G} \Big( \int_{B(e, |x|_G)} f(z) dz \Big)^s \varphi (x) dx \leq \sum_{m \geq 0 } W(m) \Big( \sum_{k \geq m} U(k) \Big)^{s/r} V(m)^{s/r'}.$$
Hence, applying the Minkowski inequality in the right side of the above inequality, we get: 
 $$\int_{G} \Big( \int_{B(e, |x|_G)} f(z) dz \Big)^s \varphi (x) dx \leq \Big( \sum_{k \geq 0} U(k) \Big( \sum_{0 \leq m \leq k} W(m) V(m)^{s/r'} \Big)^{r/s} \Big)^{s/r}.$$
 
Now we estimate the right hand side of the above expression: 
\begin{align*}
    \sum_{0 \leq m \leq k} W(m) V(m)^{s/r'} & =  \sum_{0 \leq m \leq k } C_3 q^{m(\alpha - Q)} \Big(C_2 q^{-m\big( \frac{Q}{r'} - \frac{\beta}{r} \big)} \Big)^{s/r'} \\ &= C_3 C_2^{s/r'} q^{k  \frac{s}{r'}\big( \frac{r'(\alpha -Q)}{s} - \big( \frac{Q}{r'} - \frac{\beta}{r} \big) \big)} \sum_{-k \leq m \leq 0} q^{m \frac{s}{r'}\big( \frac{r'(\alpha -Q)}{s} - \big( \frac{Q}{r'} - \frac{\beta}{r} \big) \big)} \\ & \leq \frac{C_3 C_2^{s/r'}}{1 - q^{- \frac{s}{r'}\big( \frac{r'(\alpha -Q)}{s} - \big( \frac{Q}{r'} - \frac{\beta}{r} \big) \big)}} q^{k \frac{s}{r'}\big( \frac{r'(\alpha -Q)}{s} - \big( \frac{Q}{r'} + \frac{\beta}{r} \big) \big)} \\ &= C_4 q^{k \frac{s}{r'}\big( \frac{r'(\alpha -Q)}{s} - \big( \frac{Q}{r'} - \frac{\beta}{r} \big) \big)}; \esp \esp \esp C_4:=\frac{C_3 C_2^{s/r'}}{1 - q^{- \frac{s}{r'}\big( \frac{r'(\alpha -Q)}{s} - \big( \frac{Q}{r'} - \frac{\beta}{r} \big) \big)}}.
\end{align*}Finally we have 
\begin{align*}
    \int_{G} \Big( \int_{B(e, |x|_G)} &f(z) dz \Big)^s \varphi (x) dx \leq \Big( \sum_{k \geq 0} U(k) \Big( \sum_{0 \leq m \leq k} W(m) V(m)^{s/r'} \Big)^{r/s} \Big)^{s/r} \\ &\leq \Big( \sum_{k \geq 0} \int_{G_k \setminus G_{k +1}} f(x)^r \psi(x ) dx \cdot C_1 q^{-k \big( \frac{Q}{r'} - \frac{\beta}{r} \big)}  \big( C_4 q^{k \frac{s}{r'}\big( \frac{r'(\alpha -Q)}{s} - \big( \frac{Q}{r'} - \frac{\beta}{r} \big) \big)} \big)^{r/s}  \Big)^{s/r}\\ &=\Big(C_1 C_4^{r/s} \sum_{k \geq 0} \int_{G_k \setminus G_{k +1}} f(x)^r \psi(x ) dx \cdot  q^{k \big(  \frac{Q}{r'} - \frac{\beta}{r}  + \frac{r}{r'}\big( \frac{r'(\alpha -Q)}{s} - \big( \frac{Q}{r'} - \frac{\beta}{r} \big)\big)}  \Big)^{s/r} \\ & = C_1^{s/r} C_4 \Big(\sum_{k \geq 0} \int_{G_k \setminus G_{k+1} } f(x)^r \psi (x) dx \Big)^{s/r} \\ &= C_1^{s/r} C_4 \Big( \int_G f(x)^r \psi (x) dx \Big)^{s/r},
\end{align*}where in the above we used the fact that $$  \frac{Q}{r'} - \frac{\beta}{r}  + \frac{r}{r'}\big( \frac{r'(\alpha -Q)}{s} - \big( \frac{Q}{r'} - \frac{\beta}{r} \big) \big) = r \Big( \frac{\alpha -Q}{s} - \big( \frac{Q}{r'} - \frac{\beta}{r}\big) \Big)=0.$$This shows how equation (\ref{eq2}) is a sufficient for inequality (\ref{eq1}) to hold. Now we show that inequality (\ref{eq1}) implies equation \ref{equat3}. For that purpose we define the sequence of functions $\{f_n \}_{n \in \N}$ by $$f_n(x) := \psi(x)^{1-r'} \mathbb{1}_{G_n} (x).$$We can check that $$\Big( \int_G f_n(x)^r \psi(x) dx \Big)^{1/r} = \Big( \int_{G_n} \psi(x)^{1-r'} dx \Big)^{1/r},$$and consequently:
\begin{align*}
    C= &C \Big( \int_G f_n(x)^r \psi(x) dx \Big)^{1/r} \Big( \int_{G_n} \psi(x)^{1-r'} dx \Big)^{-1/r} \\ &\geq \Big( \int_G \phi(x) \Big( \int_{B(e,|x|)} f_n(z) dz \Big)^s dx \Big)^{1/s}\Big( \int_{G_n} \psi(x)^{1-r'} dx \Big)^{-1/r} \\ &\geq \Big( \int_{G \setminus G_n} \phi(x) \Big( \int_{B(e,|x|)} f_n(z) dz \Big)^s dx \Big)^{1/s}\Big( \int_{G_n} \psi(x)^{1-r'} dx \Big)^{-1/r} \\ &=\Big( \int_{G \setminus G_n} \phi(x)  dx \Big)^{1/s}\Big( \int_{G_n} \psi(x)^{1-r'} dx \Big)^{1/r'}.
\end{align*}
Computing we get: $$\Big( \int_{G_n} \psi (x)^{1 -r' } dx \Big)^{1/r'} = \Big( \frac{1}{1- q^{-(Q + \beta(1-r'))}} \int_{G_0 \setminus G_1} \psi (x)^{1 - r'} dx\Big)^{1/r'} q^{-n \big( \frac{Q}{r'} - \frac{\beta}{r} \big)},$$ and $$\Big( \int_{G \setminus G_n} \varphi(x)  dx \Big)^{1/s} = \Big( q^{Q - \alpha} \frac{1 - q^{-n (\alpha - Q)}}{1 - q^{-(\alpha - Q)}} \int_{G_0 \setminus G_1} \varphi (x) dx \Big)^{1/s} q^{n \big( \frac{\alpha - Q}{s} \big)}.$$Now it is obvious that the inequality  $$\Big( \int_{G \setminus G_n} \phi(x)  dx \Big)^{1/s}\Big( \int_{G_n} \psi(x)^{1-r'} dx \Big)^{1/r'} \leq C,$$holds for every $n \in \N_0$ only if $$\frac{\alpha - Q}{s} - \big( \frac{Q}{r'} + \frac{\beta}{r} \big) \leq 0.$$This concludes the proof. 
\end{proof}

In the non-compact case we can use the same arguments to provide weighted $L^r-L^s$ integral inequalities, but this time we are able to provide a necessary and sufficient conditions.

\begin{rem}
In the case where $\varphi(x) = |x|^{-Q s} = | B(e, |x|)|^{-s} $ and $\psi (x) =1$ we obtain the $L^r-L^s$ boundedness of the Hardy operator on $G$.
\end{rem}
\begin{proof}[Proof of Theorem \ref{Teofirstintegralineqnoncompact}:]
Let us show first that Equation \ref{eq4} implies Equation \ref{eq3} in the same way we did it for the proof of Theorem \ref{Teofirstintegralineqnoncompact}. 

\begin{align*}
    \int_{G} \Big( \int_{B(e, |x|)} f(z) dz \Big)^s \varphi (x) dx &= \sum_{m \in \Z } \int_{G_m \setminus G_{m+1} } \Big( \int_{B(e ,q^{-m})} f(z) dz  \Big)^s \varphi (x) dx \\ &= \sum_{m \in \Z } \int_{G_m \setminus G_{m+1} } \Big( \sum_{k \geq  m } \int_{G_k \setminus G_{k+1} } f(z) dz  \Big)^s \varphi (x) dx.
\end{align*}
Now we define $g, U, V$ and $W$ in the same way as before, so that: 
\begin{itemize}
    \item $g(m) = C_1 q^{-m \frac{Q + \beta (1-r')}{rr'}}$,
    \item $U(m) = \Big( \int_{G_m \setminus G_{m +1}} f(x)^r \psi_\beta (x ) dx\Big) g(m)^r,$
    \item $V(m) = C_2 q^{-m(\frac{Q}{r'} - \frac{\beta}{r})},$
    \item $W(m) = C_3 q^{m(\alpha - Q)}.$
\end{itemize} 
By using H{\"o}lder inequality 
\begin{align*}
    \sum_{k \geq m} &\int_{G_0 \setminus G_1} q^{-kQ} f(\mathscr{p}^k z) dz = \sum_{k \geq m} \int_{G_0 \setminus G_1} q^{-kQ/r} [f(\mathscr{p}^k z) \psi (\mathscr{p}^k z)^{1/r} g(k) ] q^{-kQ/r'} [ \psi (\mathscr{p}^k z)^{1/r} g(k)]^{-1} dz \\ & \leq \Big( \sum_{k \geq m} \int_{G_0 \setminus G_1} q^{-kQ} [f(\mathscr{p}^k z) \psi (\mathscr{p}^k z)^{1/r} g(k) ]^r dz \Big)^{1/r} \Big( \sum_{k \geq m} q^{-kQ}
    \int_{G_0 \setminus G_1} [ \psi (\mathscr{p}^k z)^{1/r} g(k)]^{-r'} dz \Big)^{1/r'} \\ &= \Big( \sum_{k \geq m} U(k) \Big)^{1/r} V(m)^{1/r'}. 
\end{align*}
In this way we can write 
$$\int_{G} \Big( \int_{B(e, |x|_G)} f(z) dz \Big)^s \varphi (x) dx \leq \sum_{m \in \Z } W(m) \Big( \sum_{k \geq m} U(k) \Big)^{s/r} V(m)^{s/r'}.$$
As before, applying the Minkowski inequality in the right side of the above inequality, we get: 
 $$\int_{G} \Big( \int_{B(e, |x|_G)} f(z) dz \Big)^s \varphi (x) dx \leq \Big( \sum_{k \in \Z} U(k) \Big( \sum_{m \leq k} W(m) V(m)^{s/r'} \Big)^{r/s} \Big)^{s/r}.$$
 
Now we estimate the right hand side of the above expression: 
\begin{align*}
    \sum_{ m \leq k} W(m) V(m)^{s/r'} & =  \sum_{m \leq k } C_3 q^{k(\alpha - Q)} \Big(C_2 q^{-m\big( \frac{Q}{r'} - \frac{\beta}{r} \big)} \Big)^{s/r'} \\ &= C_3 C_2^{s/r'} q^{k  \frac{s}{r'}\big( \frac{r'(\alpha -Q)}{s} - \big( \frac{Q}{r'} - \frac{\beta}{r} \big) \big)} \sum_{ m \leq 0} q^{m \frac{s}{r'}\big( \frac{r'(\alpha -Q)}{s} - \big( \frac{Q}{r'} - \frac{\beta}{r} \big) \big)} \\ & = \frac{C_3 C_2^{s/r'}}{1 - q^{- \frac{s}{r'}\big( \frac{r'(\alpha -Q)}{s} - \big( \frac{Q}{r'} - \frac{\beta}{r} \big) \big)}} q^{k \frac{s}{r'}\big( \frac{r'(\alpha -Q)}{s} - \big( \frac{Q}{r'} - \frac{\beta}{r} \big) \big)} \\ &= C_4 q^{k \frac{s}{r'}\big( \frac{r'(\alpha -Q)}{s} - \big( \frac{Q}{r'} - \frac{\beta}{r} \big) \big)}; \esp \esp \esp C_4:=\frac{C_3 C_2^{s/r'}}{1 - q^{- \frac{s}{r'}\big( \frac{r'(\alpha -Q)}{s} - \big( \frac{Q}{r'} - \frac{\beta}{r} \big) \big)}}.
\end{align*}Finally we have 
\begin{align*}
    \int_{G} \Big( \int_{B(e, |x|_G)} &f(z) dz \Big)^s \varphi (x) dx \leq \Big( \sum_{k \in \Z} U(k) \Big( \sum_{ m \leq k} W(m) V(m)^{s/r'} \Big)^{r/s} \Big)^{s/r} \\ &\leq \Big( \sum_{k \in \Z} \int_{G_k \setminus G_{k +1}} f(x)^r \psi_\beta (x ) dx \cdot C_1 q^{-k \big( \frac{Q}{r'} + \frac{\beta}{r} \big)}  \big( C_4 q^{k \frac{s}{r'}\big( \frac{r'(\alpha -Q)}{s} - \big( \frac{Q}{r'} + \frac{\beta}{r} \big) \big)} \big)^{r/s}  \Big)^{s/r}\\ & \leq C_1^{s/r} C_4 \Big(\sum_{k \in \Z} \int_{G_k \setminus G_{k+1} } f(x)^r \psi (x) dx \Big)^{s/r} \\ &= C_1^{s/r} C_4 \Big( \int_G f(x)^r \psi (x) dx \Big)^{s/r}.
\end{align*}
This shows that Equation \ref{eq4} implies Equation \ref{eq3}. To prove the converse we just take again $f_n (x) =  \psi (x)^{1 - r'} \mathbb{1}_{G_n} (x)$ so that $$\Big( \int_G f_n(x)^r \psi(x) dx \Big)^{1/r} \Big( \int_{G_n} \psi(x)^{1-r'} dx \Big)^{-1/r} =1.$$In this way:
\begin{align*}
    C= &C \Big( \int_G f_n(x)^r \psi(x) dx \Big)^{1/r} \Big( \int_{G_n} \psi(x)^{1-r'} dx \Big)^{-1/r} \\ &\geq \Big( \int_G \phi(x) \Big( \int_{B(e,|x|)} f_n(z) dz \Big)^s dx \Big)^{1/s}\Big( \int_{G_n} \psi(x)^{1-r'} dx \Big)^{-1/r} \\ &\geq \Big( \int_{G \setminus G_n} \phi(x) \Big( \int_{B(e,|x|)} f_n(z) dz \Big)^s dx \Big)^{1/s}\Big( \int_{G_n} \psi(x)^{1-r'} dx \Big)^{-1/r} \\ &=\Big( \int_{G \setminus G_n} \phi(x)  dx \Big)^{1/s}\Big( \int_{G_n} \psi(x)^{1-r'} dx \Big)^{1/r'}.
\end{align*}
Computing we get: $$\Big( \int_{G_n} \psi (x)^{1 -r' } dx \Big)^{1/r} = \Big( \frac{1}{1- p^{-(Q + \beta(1-r'))}} \int_{G_0 \setminus G_1} \psi (x)^{1 - r'} dx\Big)^{1/r} q^{-n \big( \frac{Q}{r'} - \frac{\beta}{r} \big)},$$ and $$\Big( \int_{G \setminus G_n} \phi(x)  dx \Big)^{1/s} = \Big( q^{Q - \alpha} \frac{1 - q^{-n (\alpha - Q)}}{1 - q^{-(\alpha - Q)}} \int_{G_0 \setminus G_1} \Big)^{1/s} q^{n \big( \frac{\alpha - Q}{s} \big)}.$$Now it is obvious that the inequality  $$\Big( \int_{G \setminus G_n} \phi(x)  dx \Big)^{1/s}\Big( \int_{G_n} \psi(x)^{1-r'} dx \Big)^{1/r'} \leq C,$$holds for every $n \in \Z$ only if $$\frac{\alpha - Q}{s} - \big( \frac{Q}{r'} - \frac{\beta}{r} \big) = 0.$$This concludes the proof. 
\end{proof}
With similar arguments we can provide weighted $L^r - L^s$ integral inequalities for the adjoint Hardy operator stated in Theorem \ref{Teosecondintegralineqnoncompact}.
\begin{rem}
It is known that for real Lie groups that the following condition is necessary and sufficient for the $L^r-L^s$ weighted integral inequality (\ref{eq3}) to hold: 
$$\sup_{R >0}\Big( \int_{|x| > R} \varphi(x) dx \Big)^{1/s} \Big( \int_{|x| \leq R} \psi(x)^{1-r'} dx \Big)^{-1/r} < \infty.$$We would like to point to the fact that the same is true here, but since we are considering exclusively homogeneous functions this condition transforms into condition (\ref{eq4}). 
\end{rem}

\begin{proof}[Proof Theorem \ref{Teofunctionalineq}:]
The first step is to see that $|K_a (x)| \leq C_0 |x|_G^{a-Q}$ implies the following inequality: 
$$\Big\| \frac{f * K_a}{|x|_G^{b/s}}  \Big\|_{L^s (G)} \leq C_0 \Big\| \frac{f * | \cdot |_G^{a - Q}}{|x|^{b/s}_G}  \Big\|_{L^s (G)},$$where \begin{align*}
    (f* |\cdot|^{a-Q}_G)(x) &= \int_G f(y) |y^{-1} x|^{a-Q}_G dy \\ &= \int_{|x|_G>|y|_G} f(y) |y^{-1} x|^{a-Q}_G dy+ \int_{|y|_G>|x|_G} f(y) |y^{-1} x|^{a-Q}_G dy + \int_{|x|_G = |y|_G} f(y) |y^{-1} x|^{a-Q}_G dy.
\end{align*}
Now we proceed by parts:
\begin{itemize}
    \item If $|x|_G>|y|_G$ then $|y^{-1}x|_G = |x|_G$. In this way \begin{align*}
        M_1^s &:=\int_{G} |x|^{-b}_G \Big( \int_{|x|_G>|y|_G} f(y) |y^{-1} x|_G^{a-Q} dy \Big)^s dx \\ &= \int_{G} |x|^{(a-Q)s-b}_G \Big( \int_{|x|_G>|y|_G} f(y) dy \Big)^s dx.
    \end{align*}
    Applying Theorem \ref{Teofirstintegralineqnoncompact} with $\alpha = (Q - a)s +b$ and $\beta=0$ we can conclude that $$M_1 \leq C \| f \|_{L^r (G)}.$$
    \item If $|y|_G>|x|_G$ then $|y^{-1} x|_G = |y|_G$. Hence \begin{align*}
        M_2^s &:= \int_{G} |x|^{-b}_G \Big( \int_{|y|_G>|x|_G} f(y) |y^{-1} x|^{a-Q}_G dy \Big)^s dx \\ &= \int_{G} |x|^{-b}_G \Big( \int_{|y|_G>|x|_G} f(y) |y|_G^{a-Q} dy \Big)^s dx \\ &= \int_{G} |x|^{-b}_G \Big( \int_{|y|_G>|x|_G} \Tilde{f}(y) dy \Big)^s dx,
    \end{align*}where $\Tilde{f}(y) := f(y) |y|^{a-Q}_G$. Now put $\varphi(x) = |x|^{-b}_G$ and $\psi(x) = |x|_G^{(Q -a)r}$. Then by using Theorem \ref{Teosecondintegralineqnoncompact} we get \begin{align*}
        M_2 \leq C \Big( \int_G \Tilde{f}(x)^r \psi(x) dx \Big)^{1/r} = C \| f\|_{L^r (G)}. 
    \end{align*}
    \item If $|x|_G=|y|_G$ we have \begin{align*}
        M_3^s&:= \int_G |x|^{-b}_G \Big( \int_{|y|_G=|x|_G} f(y) |y^{-1}x|^{a-Q}_G dy\Big)^s dx \\ & = \sum_{k \in \Z} q^{kb} \int_{|x|_G=q^{-k}} \Big( \int_{|y|=q^{-k}} |y^{-1}x|^{a-Q}_G f(y)dy \Big)^s dx.
    \end{align*}Let $\mathbb{1}_A$ be the characteristic function of $A \subseteq G$. Let $t$ be a positive real number such that $1 + \frac{1}{t} = \frac{1}{s} + \frac{1}{r}$. Then, $$\int_{|y|_G=q^{-k}} |y^{-1}x|_G^{a-Q} f(y)dy = \int_{G} [|y^{-1}x|^{a-Q} \mathbb{1}_{G_k} (y^{-1} x)] [ f(y) \mathbb{1}_{G_k \setminus G_{k+1}} (y)]dy,$$so that by Young's convolution inequality we get $$M_3^s \leq \sum_{k \in \Z} q^{kb} \|( |\cdot|^{a-Q}_G \mathbb{1}_{G_k}) \|_{L^t(G)}^t \| (f \esp  \mathbb{1}_{G_k \setminus G_{k+1}} ) \|_{L^r(G)}^s, \esp \esp \esp t= \frac{sr}{sr +r -s }.$$Now we compute: \begin{align*}
        \|( |\cdot|^{a-Q} \mathbb{1}_{G_k}) \|_{L^t(G)}^t  &= \int_{G_k} |x|^{(a -Q)t} dx \\ &=(1-q^{-Q}) \sum_{l \in \N_0} q^{-(k+l)((a-Q)t +Q) } \\ &=(1-q^{-Q}) \sum_{l \in \N_0} q^{-(k+l)\frac{br}{sr +r -s }  } \\ &=\frac{1-p^{-Q}}{1 - q^{-\frac{br}{sr +r -s } }} p^{-k\frac{br}{sr +r -s }},
    \end{align*}which implies that $$\|( |\cdot|^{a-Q} \mathbb{1}_{G_k}) \|_{L^t(G)}^t = \Big( \frac{1-q^{-Q}}{1 - q^{-\frac{br}{sr +r -s } }} \Big)^{s \frac{sr+r-s}{sr}} p^{-kb}.$$Now it is clear that \begin{align*}
        M_3^s &\leq C \sum_{k \in \Z} \Big( \int_{G_k \setminus G_{k+1} } f(x)^r dx \Big)^{s/r} \leq C \| f \|_{L^r (G)}.
    \end{align*}
\end{itemize}
This completes the proof.
\end{proof}
\begin{proof}[Proof of Theorem \ref{UP}]
By using H\"{o}lder's inequality and Hardy's inequality, we get 
\begin{equation}
    \begin{split}
        \int_{G}|f(x)|^{2}dx&=\int_{G}|x|_G^{-a}|f(x)| |f(x)||x|^{a}_G dx\\&\leq \left(\int_{G}|x|^{-ar}_G|f(x)|^{r}dx\right)^{\frac{1}{r}} \left(\int_{G}|x|^{ar'}_G|f(x)|^{r'}dx\right)^{\frac{1}{r'}}\\&
        \leq C \| \MO^a f \|_{L^r (G)} \||\cdot|^{a}_G f\|_{L^{r'}(G)}.
    \end{split}
\end{equation}
\end{proof}

Let us prove the Gagliardo-Nirenberg inequality.
\begin{proof}[Proof of Theorem \ref{GNthm}]
By using H\"{o}lder's inequality  $\frac{1}{s}=\alpha\left(\frac{1}{r}-\frac{a}{Q}\right)+\frac{1-\alpha}{\tau}$ and the  Sobolev inequality, we get 
\begin{equation}
    \begin{split}
        \int_{G}|f(x)|^{s}dx&=\int_{G}|f(x)|^{(1-\alpha)s}|f(x)|^{\alpha s}dx\\&
        \leq \left(\int_{G}|f(x)|^{\frac{Qr}{Q-a r}}dx\right)^{\alpha s\left(\frac{1}{r}-\frac{a}{Q}\right)}\left(\int_{G}|f(x)|^{\tau}dx\right)^{\frac{(1-\alpha)s}{\tau}}\\&
           \leq C\| \MO^a f \|^{\alpha}_{L^r (G)}\|f\|^{1-\alpha}_{L^{\tau}(G)}.
    \end{split}
\end{equation}
\end{proof}
Now we conclude this section by proving the Stein-Weiss inequality on graded $p$-adic Lie groups. We are interested now in studying the following integral operator: $$I_\lambda f(x) := \int_{G} \frac{f(y)}{|y^{-1}x|^{\lambda}_G} dy, \esp \esp 0< \lambda < Q.$$ 

\begin{proof}[Proof of Theorem \ref{teosteinweiss}:]
We will use the same arguments as in the proof of Theorem \ref{Teofunctionalineq}. Let us start by dividing our integral in three parts: 
\begin{align*}
    \| |\cdot|^{-\alpha}_G I_\lambda f \|_{L^s(G)}^s &= \int_G\Big( \int_{G} \frac{f(y)}{|x|^\alpha_G |y^{-1}x|^{\lambda}_G} dy\Big)^s dx \leq 3^s( I_1 + I_2 + I_3),
\end{align*}
where $$I_1=\int_G\Big( \int_{|y|_G < |x|_G} \frac{f(y)}{|x|^\alpha_G |y^{-1}x|^{\lambda}_G} dy\Big)^s dx= \int_G |x|^{-s(\alpha+\lambda)}_G \Big( \int_{|y|_G<|x|_G} f(y)  dy\Big)^s dx,$$ $$I_2=\int_G\Big( \int_{|x|_G < |y|_G} \frac{f(y)}{|x|^\alpha_G |y^{-1}x|^{\lambda}_G} dy\Big)^s dx= \int_G |x|^{-\alpha s}_G\Big( \int_{|x|_G < |y|_G} \frac{f(y)}{ |y|^{\lambda}_G} dy\Big)^s dx, $$and $$I_3= \int_G\Big( \int_{|x|_G = |y|_G} \frac{f(y)}{|x|^\alpha_G |y^{-1}x|^{\lambda}_G} dy\Big)^s dx.$$Now we proceed to estimate the above integrals: 
\begin{itemize}
    \item For $I_1$ we only need to apply Theorem \ref{Teofirstintegralineqnoncompact}. In order to do so just check that $$\frac{s(\alpha + \lambda) - Q}{s}=Q(\frac{\alpha + \lambda}{Q} - \frac{1}{s}) =Q( 1 - \frac{1}{r} - \frac{\beta}{Q}) = \frac{Q}{r'} - \beta>0.$$
    \item For $I_2$ we apply Theorem \ref{Teosecondintegralineqnoncompact}  for $\varphi(x) = |x|^{-\alpha s}_G$ and $\psi(x)= |x|^{(\lambda + \beta)r}_G$. Here we use the fact that $\alpha< Q/s$ implies $Q-r'(\beta + \lambda) <0$. 
    \item For $I_3$ let $s$ be a positive real number such that $1 + \frac{1}{s} = \frac{1}{s} + \frac{1}{r}$. Then, $$\int_{|y|_G=p^{-k}} |y^{-1}x|^{-\lambda}_G f(y)dy = \int_{G} [|y^{-1}x|^{-\lambda}_G \mathbb{1}_{G_k} (y^{-1} x)] [ f(y) \mathbb{1}_{G_k \setminus G_{k+1}} (y)]dy,$$so that by Young's convolution inequality we get $$I_3 \leq \sum_{k \in \Z} p^{k\alpha s} \|( |\cdot|^{ -\lambda}_G \mathbb{1}_{G_k}) \|_{L^s(G)}^s \| (f \esp  \mathbb{1}_{G_k \setminus G_{k+1}} ) \|_{L^r(G)}^s, \esp \esp \esp s= \frac{sr}{sr +r -s }.$$Now we compute: \begin{align*}
        \|( |\cdot|^{-\lambda}_G \mathbb{1}_{G_k}) \|_{L^s(G)}^s  &= \int_{G_k} |x|^{-\lambda s}_G dx \\ &=(1-p^{-Q}) \sum_{l \in \N_0} p^{-(k+l)( -\lambda s +Q) } \\  &=\frac{1-p^{-Q}}{1 - p^{-(-\lambda s +Q) }} p^{-k( -\lambda s +Q)} \\ &=, \frac{1-p^{-Q}}{1 - p^{-s(\beta + \alpha) }} p^{-ks( \alpha + \beta)}
    \end{align*}which implies that $$\|( |\cdot|^{a-Q}_G \mathbb{1}_{G_k}) \|_{L^s(G)}^s = \Big( \frac{1-p^{-Q}}{1 - p^{-s(\alpha + \beta ) }} \Big)^{s/s}  p^{-ks( \alpha +\beta)}.$$Now it is clear that \begin{align*}
        I_3 &\leq C \sum_{k \in \Z} \Big( \int_{G_k \setminus G_{k+1} } f(x)^r |x|^{\beta r}_G dx \Big)^{s/r} \leq C \| |\cdot |^\beta_G f \|_{L^r (G)}.
    \end{align*}
\end{itemize}
This completes the proof.

\end{proof}

\nocite{*}
\bibliographystyle{acm}
\bibliography{main}
\Addresses

\end{document}